\theoremstyle{plain}
\newtheorem{theorem}{Theorem}[section]
\newtheorem{lemma}[theorem]{Lemma}
\newtheorem{prop}[theorem]{Proposition}
\newtheorem{corollary}[theorem]{Corollary}
\theoremstyle{definition}
\newtheorem{definition}[theorem]{Definition}
\newtheorem{example}[theorem]{Example}
\newtheorem{remark}[theorem]{Remark}
\newtheorem{ass}[theorem]{Assumptions}
\theoremstyle{plain}   %% This is the default, anyway
\newtheorem{bigthm}{Theorem}   % Numbered separately, as A, B, etc.
\newcommand{\adhoc}[2]{%
	\begin{center}
	\scalebox{#1}{\hbox{#2}}
	\end{center}
	}
\def\ps@plain{\ps@empty
  \def\@oddfoot{\normalfont\scriptsize \hfil\thepage\hfil}%
  \let\@evenfoot\@oddfoot}
\def\ps@headings{\ps@empty
  \def\@evenhead{%
    \setTrue{runhead}%
    \normalfont\scriptsize
    \hfil
    \def\thanks{\protect\thanks@warning}%
    \leftmark{}{}\hfil}%
  \def\@oddhead{%
    \setTrue{runhead}%
    \normalfont\scriptsize \hfil
    \def\thanks{\protect\thanks@warning}%
    \rightmark{}{}\hfil}%
  \def\@oddfoot{\normalfont\scriptsize \hfil\thepage\hfil}%
  \let\@evenfoot\@oddfoot
}
\author{Amalie Høgenhaven}
\date{\today}
\thanks{Assistance from DNRF Niels Bohr Professorship of Lars Hesselholt is
gratefully acknowledged}
\newlength{\TMP}
\newcommand{\interior}[1]{%
  {\kern0pt#1}^{\mathrm{o}}%
}
\newcommand{\Acirc}{\,\circ\,}
\newcommand{\eqq}[1]{\text{``}#1\text{''}}
\newcommand{\C}{\mathbb{C}}
\newcommand{\R}{\mathbb{R}}
\DeclareMathOperator{\THH}{THH}
\DeclareMathOperator{\TRR}{TRR}
\DeclareMathOperator{\THR}{THR}
\DeclareMathOperator{\TCR}{TCR}
\DeclareMathOperator{\Gal}{Gal}
\DeclareMathOperator{\HE}{HE}
\DeclareMathOperator{\HP}{HP}
\newcommand{\Bdi}{B^{\mathrm{di}}}
\newcommand{\id}{\mathrm{id}}
\DeclareMathOperator*{\hocolim}{hocolim}
\newcommand{\Spe}{\mathsf{Sp}^{\mathrm{O}}}
\DeclareMathOperator*{\holim}{holim}
\DeclareMathOperator*{\colim}{colim}
\DeclareMathOperator{\Map}{Map}
\DeclareMathOperator{\Homeo}{Homeo}
\DeclareMathOperator{\Ob}{Ob}
\newcommand{\ob}{\Ob}
\DeclareMathOperator{\Fun}{Fun}
\DeclareMathOperator{\conn}{conn}
\newcommand{\Top}{\mathsf{Top}}
\DeclareMathOperator{\Fix}{Fix}
\newcommand{\incl}{\mathrm{incl}}
\newcommand{\trf}{\mathrm{trf}}
\newcommand{\proj}{\mathrm{proj}}
\newcommand{\pr}{\mathrm{pr}}
\newcommand{\res}{\mathrm{res}}
\newcommand{\ev}{\mathrm{ev}}
\newcommand{\op}{\mathrm{op}}
\DeclareMathOperator{\ind}{ind}
\DeclareMathOperator{\Arc}{Arc}
\newcommand{\Z}{\mathbb{Z}}
\title{Real Topological Cyclic Homology of Spherical Group Rings}
\begin{document}

\begin{abstract}
We compute the $G$-equivariant homotopy type of the real topological cyclic homology of spherical group rings with anti-involution induced by taking inverses in the group, where $G$ denotes the group $\Gal(\mathbb{C}/\mathbb{R})$. The real topological Hochschild homology of a spherical group ring $\mathbb{S}[\Gamma]$, with anti-involution as described, is an $O(2)$-cyclotomic spectrum and we construct a map commuting with the cyclotomic structures from the $O(2)$-equivariant suspension spectrum of the dihedral bar construction on $\Gamma$ to the real topological Hochschild homology of $\mathbb{S}[\Gamma]$, which induce isomorphisms on $C_{p^n}$- and $D_{p^n}$-homotopy groups for all $n\in \Z$ and all primes $p$. Here $C_{p^n}$ is the cyclic group of order $p^n$ and $D_{p^n}$ is the dihedral group of order $2p^n$. Finally, we compute the $G$-equivariant homotopy type of the real topological cyclic homology of $\mathbb{S}[\Gamma]$ at a prime $p$.
\end{abstract}

\maketitle

\tableofcontents

\newpage

\section*{Introduction}
This paper determines the $G$-equivariant homotopy type of the real topological cyclic homology of spherical group rings with anti-involution induced by taking inverses in the group, where $G$ denotes the group $\Gal(\mathbb{C} / \mathbb{R})$ of order 2. Bökstedt-Hsiang-Madsen calculated the topological cyclic homology of spherical group rings in \cite[Section 5]{BHM} and this is a generalization of the classical results. The long term goal of this program is to determine the canonical involution on the stable pseudo-isotopy space of a compact connected topological manifold. The equivariant stable pseudo-isotopy space of a manifold and the real topological cyclic homology of spherical group rings are connected via real algebraic $K$-theory, as explained below.

Recently, Hesselholt and Madsen defined real algebraic $K$-theory in \cite{HM15}, which associates a $G$-spectrum $KR(A,D)$ to a ring spectrum $A$ with anti-involution $D$. Real topological Hochschild homology was also constructed in \cite{HM15} as a generalization of topological Hochschild homology to the $G$-equivariant setting. Real topological Hochschild homology associates an $O(2)$-equivariant orthogonal spectrum $\THR(A, D)$ to a pair $(A,D)$ using a dihedral variant of Bökstedt's model introduced in \cite{Bo}. The generalization leads to a $G$-equivariant version of topological cyclic homology, which we denote $\TCR(A,D)$. Hesselholt and Madsen constructed a $G$-equivariant trace map 
	\[
	\operatorname{tr}: KR(A,D) \to \THR(A,D),
	\]
which factors through $\TCR(A,D)$. 

The real algebraic $K$-theory of spherical group rings have close connections to the geometry of manifolds. Let $M$ be a compact connected topological manifold admitting a smooth structure. A topological pseudo-isotopy of $M$ is a homeomorphism $h: M \times [0,1] \to M \times [0,1]$ which is the identity on $\partial M \times [0,1] \cup M \times \{0\} $. We let $P(M)$ be the space of such homeomorphisms and we note that there is a stabilization map $
P(M) \to P(M \times [0,1])$ 
given by crossing~with the identity. The stable pseudo-isotopy space $\mathscr{P}(M)$ is defined as the homotopy colimit of the stabilization maps. Igusa showed in \cite{Ig}, building on work by Hatcher in \cite{HAT}, that the inclusion $P(M) \to \mathscr{P}(M)$ is $k$-connected if $k$ is less than both $(\dim(M)-7)/2$ and $(\dim(M)-4)/3$. There is a geometric involution on $P(M)$ giving by ``turning a pseudo-istopy upside down,'' see \cite{Ha}, which in turn induces an involution on $\mathscr{P}(M)$. By work of Weiss and Williams~\cite{WWI}, there is map
\[
\widetilde{\operatorname{Homeo}}(M) /\operatorname{Homeo}(M) \to \mathscr{P}(M)_{hG}
\]
which is at least as connected as the stabilization map by Igusa, where the left hand space is the quotient of the space of block homeomorphisms of $M$ by is the space of homeomorphisms of $M$, and $\mathscr{P}(M)_{hG}$ are the homotopy orbits with respect to the involution. Thus information about the involution on the stable pseudo-isotopy yields information about the self-homeomorphisms of the manifold. The stable psuedo-isotopy space can be expressed in terms of Waldhausen algebraic $K$-theory of spaces. If we let $\Gamma$ be the Kan loop group of $M$, (i.e a simplicial group such that $M$ is weakly equivalent to the classifying space $B\Gamma$) then by work of Waldhuasen \cite{Wa},\cite{Wa83} and Waldhausen-Rognes-Jahren \cite{WRJ}, there is a cofibration sequence of spectra
\[
K(\mathbb{S}) \wedge \operatorname{B} \Gamma_+ \to K(\mathbb{S}[\Gamma]) \to \Sigma^2 \mathcal{P}(M) \to \Sigma \big( K(\mathbb{S}) \wedge \operatorname{B} \Gamma_+\big).
\] 
The generalization to real algebraic $K$-theory expresses the equivariant stable pseudo-isotopy space in terms of the real algebraic $K$-theory of spherical group rings with anti-involution and an equivariant understanding of the real topological cyclic homology will via trace methods give information about the real algebraic $K$-theory. In this paper we consider the basic anti-involution on the spherical group rings induced by taking inverses in the group. In order to obtain geometric applications as indicated, it will be necessary to consider more general anti-involutions. 

We proceed to explain the content of this paper. In Section 1 we review the definition of the orthogonal $O(2)$-spectrum $\THR(A,D)$ and we establish an equivariant version of Bökstedt's approximation lemma. 

In Section 2 we observe that the cyclotomic structure of the classical topological Hochschild homology spectrum is compatible with the $G$-action. The compatibility is crucial, if we want a $G$-equivariant version of topological cyclic homology. In order to explain what this means, we let $\mathbb{T}$ denote the multiplicative group of complex numbers of modulus 1. The group $G=\Gal(\mathbb{C}/\mathbb{R})$ acts on $\mathbb{T}$ and $O(2)$ is the semi-direct product $O(2)= \mathbb{T} \rtimes G$. Let
\[
\rho_r: O(2)\to O(2)/C_r 
\] 
be the root isomorphism given by $\rho_r(z)=z^{\frac{1}{r}}C_r$ if $z\in \mathbb{T}$ and $\rho_r(x)=x$ if $x\in G$ and let $\THR(A,D)^{gC_r}$ denote the $C_r$-geometric fixed points.  The $O(2)$-cyclotomic structure is a collection of compatible $O(2)$-equivariant maps
\[T_r:  \rho_r^* \left( \THR(A,D)^{gC_r}\right) \to \THR(A, D)\]
which induce weak equivalences on $H$-fixed points for all finite subgroups $H \leq O(2)$, when pre-composed with the canonical map from the derived $C_r$-geometric fixed points. 

In Section 3 we observe that the $O(2)$-cyclotomic structure on $\THR(A,D)$ gives rise to $G$-equivariant restriction maps 
\[R_n: \THR(A,D)^{C_{p^n}} \to \THR(A,D)^{C_{p^{n-1}}},\] 
and we define the real topological cyclic homology at a prime $p$ as an orthogonal $G$-spectrum $\TCR(A,D; p)$ by mimicking the classical definition by Bökstedt-Hsiang-Madsen in \cite{BHM}. We define a $G$-spectrum $\TRR(A,D;p)$ as the homotopy limit over the $R_n$ maps
\[\TRR(A,D;p):=\holim_{n, R_n} \THR(A, D)^{C_{p^n}} .\]
The Frobenius maps, which are inclusion of fixed points,  induce a self-map of the \mbox{$G$-spectrum} $\TRR(A,D;p)$, which we denote $\varphi$, and the real topological cyclic homology of $(A,D)$ at $p$, $\TCR(A,D;p)$, is the homotopy equalizer of the maps
 \[\begin{tikzpicture}
\setlength{\TMP}{2.5pt}
\node(a){$\TRR(A,D;p)$};
\node(b)[right of=a, node distance = 3.5cm]{$\TRR(A,D;p).$};
\draw[->] ([yshift=\TMP]a.east) to node [above]{$\scriptstyle \varphi$} ([yshift=\TMP]b.west);
\draw[->] ([yshift=-\TMP]a.east) to node [below]{$\scriptstyle\id$} ([yshift=-\TMP]b.west);

\end{tikzpicture}\]

Section 4 computes the real topological Hochschild homology and the real topological cyclic homology of a spherical group ring $\mathbb{S}[\Gamma]$ with anti-involution $\id[\Gamma]$ induced by taking inverses in the group. In particular, we obtain a calculation of the real topological cyclic homology of the sphere spectrum with the identity serving as anti-involution.

In the rest of the introduction, we state our computational results for spherical group rings. The proofs of Theorems A, B and C below can be found in Section~4. 

We let $\Bdi\Gamma$ denote the geometric realization of the dihedral bar construction on $\Gamma$. The space $\Bdi\Gamma$ is weakly equivalent to the free loop-space $\Map(\mathbb{T}, B\Gamma)$ by \mbox{\cite[Theorem 7.3.11]{Lo}}. It follows from \cite[Section 2.1]{Ma} and \cite[Theorem 4.0.5]{SN} that the weak equivalence induces isomorphism on $\pi_*^H(-)$ for all finite subgroups $H\leq O(2)$, if we let $O(2)$ act on the free loop space as follows: The group $O(2)$ acts on $\mathbb{T}$ by multiplication and complex conjugation. Taking inverses in the group induces a \mbox{$G$-action} on $B\Gamma$ and we view $B\Gamma$ as an $O(2)$-space with trivial $\mathbb{T}$-action. Finally, $O(2)$ acts on the free loop space by the conjugation action. There are $O(2)$-equivariant homeomorphisms
\[ p_r: \Bdi\Gamma \xrightarrow{\cong} \rho_{r}^* \left( \Bdi\Gamma^{C_r} \right),\]
which under the identification with the free loop space correspond to the maps, which take a loop to the $r$-fold concatenation with itself. These maps give the \mbox{$O(2)$-equivariant} suspension spectrum of $ \Bdi\Gamma$ an $O(2)$-cyclotomic structure. 

\begin{bigthm}
Let $\Gamma$ be a topological group. There is a map of $O(2)$-orthogonal spectra
\[i: \Sigma^\infty_{O(2)} \Bdi\Gamma_+ \to \THR(\mathbb{S}[\Gamma], \id[\Gamma]),\]
commuting with the cyclotomic structures, which induces isomorphisms on $\pi_*^{C_{p^n}}(-)$ and $\pi_*^{D_{p^n}}(-)$ for all $n\geq 0$ and all primes $p$. 
\end{bigthm}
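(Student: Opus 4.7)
The plan is to construct $i$ as the inclusion of the basepoint component in Bökstedt's homotopy colimit definition of $\THR$, verify that this inclusion respects the full dihedral simplicial structure and hence the cyclotomic structures, and then prove the fixed-point equivalences by a double induction on $n$ built on the equivariant approximation lemma of Section~1.

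Concretely, $\THR(\mathbb{S}[\Gamma],\id[\Gamma])$ is a dihedral orthogonal spectrum whose $k$-simplices arise as a homotopy colimit over $I^{k+1}$ of mapping spaces into smash products of the levels $\mathbb{S}[\Gamma](x_i)=\Gamma_+\wedge S^{x_i}$. The vertex $(0,\dots,0)\in I^{k+1}$ contributes a copy of $\Gamma_+^{\wedge(k+1)}$, which matches exactly the $k$-simplices of the dihedral bar construction on $\Gamma$. I would take $i$ to be the map of dihedral spectra induced by including this vertex in every simplicial degree. The face, degeneracy, cyclic, and reflection operators in Bökstedt's dihedral nerve restrict on the basepoint component to the standard dihedral structure maps of the dihedral nerve of $\Gamma$, so after realization $i$ is $O(2)$-equivariant. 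Both cyclotomic structure maps factor through edgewise subdivision and the $p_r$-concatenation homeomorphism on the underlying simplicial space, and a short diagram chase shows $i$ intertwines them.

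For the fixed-point statements I would argue by induction on $n$, treating $\pi_*^{C_{p^n}}$ and $\pi_*^{D_{p^n}}$ together. The base case $n=0$ is the statement for $\pi_*^e$ (the classical Bökstedt identification $\THH(\mathbb{S}[\Gamma])\simeq \Sigma^\infty B^{\mathrm{cy}}\Gamma_+$) and for $\pi_*^G$, the latter being precisely what the equivariant version of Bökstedt's approximation lemma from Section~1 delivers: applied simplicial-degreewise it shows that the basepoint inclusion $\{(0,\dots,0)\}\hookrightarrow I^{k+1}$ is a $G$-equivalence, where $G$ acts on $I^{k+1}$ by reversal of indexing. For the inductive step, I would apply the isotropy separation cofibre sequence $(E\mathcal{F})_+\wedge X\to X\to \widetilde{E}\mathcal{F}\wedge X$ with $\mathcal{F}$ the family of proper subgroups of $C_{p^n}$, respectively of $D_{p^n}$, inside $O(2)$. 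Taking fixed points, the cofamily part computes $\Phi^{C_{p^n}}$ or $\Phi^{D_{p^n}}\simeq \Phi^G\Phi^{C_{p^n}}$; cyclotomic compatibility then reduces these to the base case via $\rho_{p^n}^*$. The family part is controlled by fixed points at proper subgroups, which are provided by the inductive hypothesis.

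The hardest step is the $G$-equivariant base case: the indexing category $I^{k+1}$ must be treated as a $G$-category with $G$ permuting coordinates, and the approximation lemma must be sharp enough to handle this permutation action simultaneously with the reflection $G$-action on the mapping spaces and on $\Gamma$. Getting the connectivity and cofinality estimates equivariantly correct, then verifying they survive the dihedral realization, is the technical heart of the argument. A secondary subtlety appears at $p=2$, where the reflections in $D_{2^n}$ fall into two conjugacy classes inside $D_{2^n}$ (though only one in $O(2)$), so the family $\mathcal{F}$ appearing in the $D_{2^n}$ isotropy separation step must be unpacked carefully to ensure that the inductive hypothesis covers all proper subgroups.
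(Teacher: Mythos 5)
Your proposal takes essentially the same route as the paper: define $i$ by including the vertex $(0,\dots,0)$ into Bökstedt's homotopy colimit, observe it is a map of dihedral spectra commuting with the cyclotomic structure maps, and then bootstrap from the underlying $G$-equivariant statement to all $C_{p^n}$ and $D_{p^n}$ by an isotropy-separation induction in which the geometric fixed points are handled by the cyclotomic structure. The paper packages the induction as Remark~\ref{G-equi} using the fundamental cofiber sequence $\operatorname{H}_{\boldsymbol{\cdot}}(C_{p^n};X) \to X_f^{C_{p^n}} \to X_f^{C_{p^{n-1}}}$ from Lemma~\ref{cofiber seq}, which is a slightly cleaner bookkeeping than your running the isotropy separation for each of $C_{p^n}$ and $D_{p^n}$ with its own family, but the logic is the same, and your remark about the two $D_{2^n}$-conjugacy classes of reflections at $p=2$ correctly anticipates a case distinction the paper also must make (compare the $r$ even cases in Proposition~\ref{EAL}).

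One imprecision is worth flagging in your base case. The equivariant approximation lemma does \emph{not} show that the inclusion at the vertex $(0,\dots,0)$ is a $G$-equivalence of spaces in each simplicial degree; the mapping space at the initial vertex is just $\Gamma_+^{\wedge|F|}\wedge S^V$, which is not highly connected, and the inclusion into the homotopy colimit is genuinely far from an equivalence at fixed $V$. What the paper actually does is factor $i_V$ (in a fixed simplicial degree, after restricting to the cofinal subcategory $G\mathcal{F}_*$) as the adjunction unit
\[
\eta:\ \bigwedge_{z\in G\cdot F}\Gamma_+\wedge S^V\ \longrightarrow\ \Map\Big(\bigwedge_z S^{i_{\overline{z}}},\ \bigwedge_z S^{i_{\overline{z}}}\wedge S^V\wedge\bigwedge_z\Gamma_+\Big)
\]
followed by the inclusion of the large vertex $i$ into the homotopy colimit. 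The Equivariant Suspension Theorem gives connectivity of $\eta$ growing like $2\dim(V^H)$ for $H\in\{e,G\}$, and Proposition~\ref{EAL} gives connectivity of the vertex inclusion growing with $i$; it is only the combination of the two, letting both $V$ and $i$ grow, that produces the stable $G$-equivalence. Keep this two-step factorization in mind when you actually carry out the estimates.
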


The $G$-equivariant restriction maps
\[
R_n: \THR(\mathbb{S}[\Gamma], \id[\Gamma])^{C_{p^n}} \to \THR(\mathbb{S}[\Gamma], \id[\Gamma])^{C_{p^{n-1}}}
\]
admits a canonical section, which provides a canonical isomorphism in the $G$-stable homotopy category
\[ \TRR(\mathbb{S}[\Gamma],\id[\Gamma];p) \sim  \prod_{j=0}^{\infty}\operatorname{H}_{\boldsymbol{\cdot}}(C_{p^j};  \Bdi\Gamma),\] 
where $\operatorname{H}_{\boldsymbol{\cdot}}(C_{p^j};  \Bdi\Gamma)$ is the $G$-equivariant Borel group homology spectrum of the subgroup $C_{p^j}$ acting on the $O(2)$-spectrum $\Sigma^\infty_{O(2)}\Bdi\Gamma$. We have a $\pi_*$-isomorphisms of $G$-spectra $c: \operatorname{H}_{\boldsymbol{\cdot}}(1;  \Bdi\Gamma) \xrightarrow{c} \Sigma^{\infty}_{G}\Bdi\Gamma_+$ given by collapsing a certain classifying space in the construction of the homology spectrum. We let $\Delta_p$ denote the $G$-equivariant composition
\[
\Bdi\Gamma \xrightarrow{p_p} \Bdi\Gamma^{C_p} \hookrightarrow \Bdi\Gamma,
\]
and we let $\Sigma^{\infty}_{G} \Bdi\Gamma_+^{\eqq{\Delta_p=\id}} $ denote the homotopy equalizer of $\Sigma^{\infty}{\Delta_p}_+$ and the identity map. We have a canonical inclusion $\iota: \Omega (\Sigma^{\infty}_{G} \Bdi\Gamma_+) \to \Sigma^{\infty}_{G} \Bdi\Gamma_+^{\eqq{\Delta_p=\id}}$. The inclusion and the projection
\[\Sigma^{\infty}_{G} \Bdi\Gamma_+ \xrightarrow{\incl} \Sigma^{\infty}_{G} \Bdi\Gamma_+  \times \prod_{j=1}^{\infty}\operatorname{H}_{\boldsymbol{\cdot}}(C_{p^j};  \Bdi\Gamma) \xrightarrow{\text{proj}}  \prod_{j=1}^{\infty}\operatorname{H}_{\boldsymbol{\cdot}}(C_{p^j};  \Bdi\Gamma),
 \]
induce the maps $I$ and $P$ in the theorem below, where the homotopy limit is constructed with respect to inclusions. 
	\begin{bigthm}
The triangle 
\begin{align*}
\Sigma^{\infty}_{G} \Bdi\Gamma_+^{\eqq{\Delta_p=\id}}  \xrightarrow{I} \TCR(\mathbb{S}[\Gamma], \id[\Gamma]; p) \xrightarrow{P} \holim_{j\geq 1}\operatorname{H}_{\boldsymbol{\cdot}}(C_{p^j};  \Bdi\Gamma) \\
%\xrightarrow{\hbox to 11em{$- \Sigma(\iota) \circ \varepsilon^{-1}\circ c \circ \incl \circ \pr_1$}}
\xrightarrow{{- \Sigma(\iota) \Acirc \varepsilon^{-1}\Acirc c \Acirc \incl \Acirc \pr_1}}
\Sigma \left(\Sigma^{\infty}_{G} \Bdi\Gamma_+^{\eqq{\Delta_p=\id}}\right)
\end{align*}
is distinguished in the $G$-stable homotopy category, where $\varepsilon: \Sigma \Omega Y \to Y$ is the counit of the loop-suspension adjunction.
	\end{bigthm}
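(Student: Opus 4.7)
My plan is to exploit the splitting of $\TRR$ from the preceding result together with the definition of $\TCR$ as the homotopy equalizer of $\varphi$ and $\id$. In the triangulated $G$-stable homotopy category, $\TCR(\mathbb{S}[\Gamma],\id[\Gamma];p)$ is then identified with the fiber of $\varphi - \id$, so the distinguished triangle will emerge by analyzing how this difference decomposes with respect to the product splitting.

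First I would make the Frobenius explicit under the isomorphism
\[
\TRR(\mathbb{S}[\Gamma],\id[\Gamma];p) \sim \Sigma^{\infty}_{G}\Bdi\Gamma_+ \times \prod_{j\geq 1}\operatorname{H}_{\boldsymbol{\cdot}}(C_{p^j};\Bdi\Gamma),
\]
using that the splitting arises from the canonical sections of the restrictions $R_n$, whereas $\varphi$ is induced level by level by the inclusions of fixed points $\THR(\mathbb{S}[\Gamma],\id[\Gamma])^{C_{p^n}} \subset \THR(\mathbb{S}[\Gamma],\id[\Gamma])^{C_{p^{n-1}}}$. The key claim to establish is that, under the identification $c \colon \operatorname{H}_{\boldsymbol{\cdot}}(1;\Bdi\Gamma) \xrightarrow{\sim} \Sigma^{\infty}_{G}\Bdi\Gamma_+$, the first diagonal component of $\varphi$ becomes $\Sigma^{\infty}(\Delta_p)_+$, while the only non-trivial off-diagonal contribution is the coupling between the $j=0$ factor and $\operatorname{H}_{\boldsymbol{\cdot}}(C_p;\Bdi\Gamma)$, which after passage through $c$ and the loop-suspension counit $\varepsilon$ is precisely the canonical inclusion $\iota$. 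On the remaining factors (those with $j\geq 1$), $\varphi$ acts up to equivalence as the identity, so that $\varphi - \id$ vanishes on that block.

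Given this description, the fiber of $\varphi - \id$ decomposes naturally: the $j=0$ factor contributes the homotopy equalizer of $\Sigma^{\infty}(\Delta_p)_+$ and $\id$, which by definition is $\Sigma^{\infty}_{G}\Bdi\Gamma_+^{\eqq{\Delta_p=\id}}$, while the higher factors contribute $\holim_{j\geq 1}\operatorname{H}_{\boldsymbol{\cdot}}(C_{p^j};\Bdi\Gamma)$ unconstrained. Assembling these two pieces via a standard fiber-sequence manipulation in the triangulated category produces the displayed triangle, with $I$ and $P$ identified as the maps constructed in the text preceding the statement, and the connecting morphism forced to be the composite $-\Sigma(\iota)\circ \varepsilon^{-1}\circ c\circ \incl\circ \pr_1$, since this composite encodes exactly the off-diagonal coupling of $\varphi$ computed in the first step.

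The principal obstacle will be the explicit identification of $\varphi$ under the splitting, and specifically pinning down the off-diagonal contribution as the loop-suspension adjoint of $\iota$. This demands a careful diagram chase tracking the Frobenius through the equivariant Adams-type identifications underlying the Borel homology description $\operatorname{H}_{\boldsymbol{\cdot}}(C_{p^j};\Bdi\Gamma)$ and verifying its compatibility with the sections of the $R_n$ maps used to build the splitting. Once this compatibility is in hand, the remainder is a formal manipulation in the triangulated $G$-stable homotopy category.
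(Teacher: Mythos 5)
There is a genuine gap in your proposal, and it concerns the identification of the Frobenius $\varphi$ under the splitting of $\TRR$. You claim that "the only non-trivial off-diagonal contribution is the coupling between the $j=0$ factor and $\operatorname{H}_{\boldsymbol{\cdot}}(C_p;\Bdi\Gamma)$" and that "on the remaining factors (those with $j\geq 1$), $\varphi$ acts up to equivalence as the identity, so that $\varphi-\id$ vanishes on that block." This is not what happens. Under the splitting provided by the sections $S_n$, the Frobenius $F_n\colon T^{C_{p^n}}\to T^{C_{p^{n-1}}}$ is identified with $\Sigma^{\infty}{\Delta_p}_+ \vee (c\circ\incl)\vee\incl\vee\cdots\vee\incl$, so it \emph{shifts} the index: for every $j\geq 2$ the summand $\operatorname{H}_{\boldsymbol{\cdot}}(C_{p^j};\Bdi\Gamma)$ is sent by $\incl$ to the summand $\operatorname{H}_{\boldsymbol{\cdot}}(C_{p^{j-1}};\Bdi\Gamma)$. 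Thus $\varphi$ restricted to the block $\prod_{j\geq 1}\operatorname{H}_{\boldsymbol{\cdot}}(C_{p^j};\Bdi\Gamma)$ is (up to reindexing) the shift-by-inclusion operator, not the identity, and $\varphi-\id$ certainly does not vanish there.

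This error is not cosmetic: your decomposition, taken at face value, would produce the full product $\prod_{j\geq 1}\operatorname{H}_{\boldsymbol{\cdot}}(C_{p^j};\Bdi\Gamma)$ (essentially unconstrained, as you say) rather than the homotopy limit $\holim_{j\geq 1}\operatorname{H}_{\boldsymbol{\cdot}}(C_{p^j};\Bdi\Gamma)$. The homotopy limit arises precisely because the homotopy equalizer of the shift map with the identity on a tower is a model for the homotopy inverse limit of the tower. This is the real technical content the paper isolates as Theorem~\ref{teknisk result}: given a self-map on $\Omega\Sigma X_0\times\prod_{i\geq 1}X_i$ whose structure is (a) a self-map $g$ of $X_0$, (b) a coupling $f_1\colon X_1\to X_0$ into the first factor, and (c) the shift $f_i\colon X_i\to X_{i-1}$ on the tail, one produces the distinguished triangle relating $\HE(g,\id)$, the full equalizer, and $\holim_{i\geq 1}X_i$. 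You would need this (or an equivalent careful manipulation) to make the "standard fiber-sequence manipulation" you invoke actually work; once the shift structure is recognized, the argument does assemble as you outline, but without it the claimed decomposition yields the wrong third term.
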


After $p$-completion, a non-equivariant identification of the homotopy limit in the triangle appears in \cite{BHM}, and the result generalizes immediately to the equivariant setting. We let $O(2)$ act on  $S(\mathbb{C})$ by multiplication and complex conjugation. The homeomorphism $S(\mathbb{C}) \star S(\mathbb{C}) \star \cdots \cong S(\mathbb{C}^{\infty})$ gives $S(\mathbb{C}^{\infty})$ an $O(2)$-action and there is an isomorphism in the $G$-stable category after $p$-completion 
\[
\Sigma^{1,1}  S(\mathbb{C}^{\infty})_+ \wedge_{\mathbb{T}} \Sigma^{\infty}_{O(2)} \Bdi\Gamma_+  \to  \holim_{j\geq 1}\operatorname{H}_{\boldsymbol{\cdot}}(C_{p^j};  \Bdi\Gamma),
\] 
where $\Sigma^{1,1}$ denotes suspension with respect to the sign representation of $G$. 

The theorem above leads to a calculation of the topological cyclic homology of the sphere spectrum with the identity serving as anti-involution. We let $\mathbb{P}^{\infty}(\mathbb{C})$ denote the infinite complex projective space with $G$ acting by complex conjugation and we let $\Sigma^{1,1}$ denote suspension with respect to the sign representation of $G$.

\begin{bigthm}
After $p$-completion, there is an isomorphism in the $G$-stable homotopy category 
\[
\TCR(\mathbb{S},\id ;p) \sim \Sigma^{1,1} \mathbb{P}^{\infty}_{-1}(\mathbb{C}) \vee \mathbb{S},
\]
where $\Sigma^{1,1} \mathbb{P}^{\infty}_{-1}(\mathbb{C})$ denotes the homotopy fiber of the $\mathbb{T}$-transfer $ \Sigma^{\infty}_{G} \Sigma^{1,1}\mathbb{P}^{\infty}(\mathbb{C})\to \mathbb{S}$. 
\end{bigthm}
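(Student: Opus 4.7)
The plan is to apply Theorem B to the trivial group $\Gamma = 1$, identify each of the three terms in the resulting distinguished triangle, and then read off the splitting. The key simplification is that $\Bdi\Gamma$ is a single point, so $\Sigma^{\infty}_{O(2)}\Bdi\Gamma_+ \simeq \mathbb{S}$ with trivial $O(2)$-action, and the map $\Delta_p$ reduces to the identity.

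First I would identify the outer terms. Since $\Delta_p = \id$, the homotopy equalizer $\Sigma^{\infty}_{G}\Bdi\Gamma_+^{\eqq{\Delta_p=\id}}$ is the homotopy fibre of the zero map $\mathbb{S} \to \mathbb{S}$ in the $G$-stable category, which splits canonically as $\mathbb{S} \vee \Omega\mathbb{S}$; the $\Omega\mathbb{S}$ summand is precisely the image of the canonical map $\iota$. For the right-hand term, the $p$-completed identification stated just after Theorem B reduces the homotopy limit to $\Sigma^{1,1}S(\mathbb{C}^{\infty})_+ \wedge_{\mathbb{T}} \mathbb{S}$, and since $\mathbb{T}$ acts trivially on $\mathbb{S}$, this collapses to $\Sigma^{1,1}(S(\mathbb{C}^{\infty})/\mathbb{T})_+ = \Sigma^{1,1}\mathbb{P}^{\infty}(\mathbb{C})_+$ with $G$ acting by complex conjugation.

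Next I would unpack the connecting map $-\Sigma(\iota)\Acirc\varepsilon^{-1}\Acirc c\Acirc\incl\Acirc\pr_1$. Under the splitting $\Sigma(\mathbb{S} \vee \Omega\mathbb{S}) \simeq \Sigma\mathbb{S} \vee \mathbb{S}$, the factor $\Sigma(\iota)\circ\varepsilon^{-1}$ is, up to sign, the inclusion of the second summand $\mathbb{S}$, so the connecting map is concentrated in that summand. Tracing the remaining composition $c\circ\incl\circ\pr_1$ through the $p$-completed identification of the homotopy limit, it arises from the structure map of Borel homology spectra $\operatorname{H}_{\boldsymbol{\cdot}}(C_p;\ast) \to \operatorname{H}_{\boldsymbol{\cdot}}(1;\ast)$ in the diagram defining the limit, followed by $c$, which under the identification is the $G$-equivariant $\mathbb{T}$-transfer $\Sigma^{1,1}\mathbb{P}^{\infty}(\mathbb{C})_+ \to \mathbb{S}$. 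The distinguished triangle thus becomes
\[
\mathbb{S} \vee \Omega\mathbb{S} \to \TCR(\mathbb{S},\id;p) \to \Sigma^{1,1}\mathbb{P}^{\infty}(\mathbb{C})_+ \xrightarrow{(0,\,-\mathrm{trf})} \Sigma\mathbb{S} \vee \mathbb{S}.
\]

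Finally I would extract the splitting. Because the $\Sigma\mathbb{S}$ component of the connecting map vanishes, a diagram chase on homotopy groups (equivalently, a composition-of-fibres argument using that $\mathbb{S} \hookrightarrow \Sigma\mathbb{S} \vee \mathbb{S}$ is split) identifies the homotopy fibre of $(0,-\mathrm{trf})$ with $\mathbb{S} \vee \operatorname{hofib}(\mathrm{trf})$, and $\operatorname{hofib}(\mathrm{trf})$ is by definition $\Sigma^{1,1}\mathbb{P}^{\infty}_{-1}(\mathbb{C})$. The main obstacle is the previous step: confirming that $c\circ\incl\circ\pr_1$ coincides with the $G$-equivariant $\mathbb{T}$-transfer after the $p$-completed identification. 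Non-equivariantly this is precisely the identification carried out by Bökstedt--Hsiang--Madsen in \cite{BHM}, and the $G$-equivariance should be essentially automatic once the identifications are chosen compatibly with the complex-conjugation $G$-action, but this naturality check is where the bookkeeping lies.
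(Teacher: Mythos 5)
Your proposal is correct and follows essentially the same route as the paper: specialize Theorem B to $\Gamma = 1$ so that $\Sigma^\infty_{O(2)}\Bdi\Gamma_+ \simeq \mathbb{S}$ and $\Delta_p = \id$, identify the homotopy equalizer as $\mathbb{S}\vee\Omega\mathbb{S}$ with $\iota$ the inclusion of the second wedge summand, replace the homotopy limit after $p$-completion by $\Sigma^{\infty}_G\Sigma^{1,1}\mathbb{P}^\infty(\mathbb{C})_+$ using Theorem \ref{transfer}, and recognize the connecting map as concentrated in one summand and given there (up to sign) by the $\mathbb{T}$-transfer. The only difference is in the bookkeeping for the last step: the paper rotates the fiber triangle of $\pr_0$, adds the trivial triangle $\mathbb{S}\xrightarrow{\id}\mathbb{S}\to * $, and invokes the triangulated-category axiom (comparison of two distinguished triangles with isomorphic outer legs) to obtain a \emph{non-canonical} isomorphism $\TCR(\mathbb{S},\id;p)\sim H_{\pr_0}\vee\mathbb{S}$; you instead observe directly that $\operatorname{hofib}$ of a map of the form $(0,-\mathrm{trf})$ into a wedge splits as $\Omega\Sigma\mathbb{S}\vee\operatorname{hofib}(\mathrm{trf})\simeq\mathbb{S}\vee\Sigma^{1,1}\mathbb{P}^\infty_{-1}(\mathbb{C})$. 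Both routes are equally valid; the triangle-comparison argument sidesteps the explicit fiber calculation, while your version makes the source of the extra $\mathbb{S}$ summand (namely $\Omega\Sigma\mathbb{S}$ coming from the zero component of the connecting map) a bit more transparent. Note that, as you acknowledge, the identification of $c\circ\incl\circ\pr_1$ with the $\mathbb{T}$-transfer is not ``essentially automatic'' from the non-equivariant case but is the content of Theorem \ref{transfer} in the paper, which is proved by filtering $S(\mathbb{C}^\infty)$ by the $S(\mathbb{C}^k)$ and doing an induction; you are right to cite the displayed identification after Theorem B rather than claiming the equivariance follows formally.
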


\medskip
\noindent\textbf{Acknowledgments.} I would like to express my sincere gratitude to my supervisor Lars Hesselholt for his inspiring guidance and support throughout this project. My thanks are due to Ib Madsen for originally encouraging my interest in topological cyclic homology and for teaching me about the classical results underlying this paper. I would also like to thank Irakli Patchkoria, Kristian Moi, and Cary Malkiewich for several helpful and inspiring conversations that have helped me write this paper.

\newpage
Throughout this paper, $\mathbb{T}$ denotes the multiplicative group of complex numbers of modulus 1 and $G$ is the group $\Gal(\C/\R)=\{1,\omega\}$ of order 2. The group $G$ acts on $\mathbb{T} \subset \mathbb{C}$ and $O(2)$ is the semi-direct product $O(2)= \mathbb{T} \rtimes G$. Let $C_r$ denote the cyclic subgroup of order $r$, generated by the $r$th root of unity $t_r:=e^{\frac{2\pi i}{r}}$. Let $D_{r}$ denote the dihedral subgroup of order $2r$ generated by $t_r$ and $\omega$. The collection $\{ C_r,D_{r}\}_{r\geq 0}$ represents all conjugacy classes of finite subgroups of $O(2)$. 

By a space we will always mean a compactly generated weak Hausdorff space and any construction is always carried out in this category.

\section{Real topological Hochschild homology}
A symmetric ring spectrum $X$ is a sequence of based spaces $X_0, X_1, \dots$ with a left based action of the symmetric group $\Sigma_n$ on $X_n$ and $\Sigma_n \times \Sigma_m$-equivariant maps $\lambda_{n,m} : X_n \wedge S^m  \to X_{n+m}$. Let $A$ be a symmetric ring spectrum with multiplication maps $\mu_{n,m}: A_n \wedge A_m \to A_{n+m}$ and unit maps $1_n: S^n \to A_n$. An anti-involution on $A$ is a self-map of the underlying symmetric spectrum $D: A \to A$, such that 
\[D^2=\id, \quad D_n \circ 1_n=1_n,\] 
and the following diagram commutes:
\begin{center}
\begin{tikzpicture}
  \matrix (m) [matrix of math nodes,row sep=2em,column sep=4em,minimum width=2em] {
 A_m \wedge A_n & A_{m} \wedge A_{n} \\
  & A_{n} \wedge A_m\\
   & A_{n+m} \\
    A_{m+n}    & A_{m+n}.\\};
  \path[-stealth]
      (m-1-1) edge node [above] {$D_m \wedge D_n$} (m-1-2)          
    (m-1-2) edge node [right] {$\gamma$} (m-2-2)
    (m-2-2) edge node [right] {$\mu_{n,m} $} (m-3-2)
    (m-3-2) edge node [right] {$\chi_{n,m}$} (m-4-2)
        (m-1-1) edge node [right] {$\mu_{m,n}$} (m-4-1)
    (m-4-1) edge node [above] {$D_{m+n}$} (m-4-2)
  
    ;
\end{tikzpicture}
\end{center}
Here $\gamma$ is the twist map and $\chi_{n,m}\in \Sigma_{n+m}$ is the shuffle permutation
\[\chi_{n,m}(i)= \left\{ \begin{array}{rl}
 i+m &\mbox{ if $1 \leq i \leq n$} \\
  i-n &\mbox{ if $n+1 \leq i \leq n+m.$}
       \end{array} \right. \]
If $A$ is commutative, then the identity defines an anti-involution on $A$. 

If $\Gamma$ is a topological group, then the spherical group ring $\mathbb{S}[\Gamma]$ is the symmetric suspension spectrum of $\Gamma_+$.  If $m: \Gamma \times \Gamma \to \Gamma$ denotes the multiplication in $\Gamma$ and $1\in \Gamma$ is the unit, then the spherical group ring becomes a symmetric ring spectrum with multiplication maps defined to be the compositions
\begin{align*}
\Gamma_+ \wedge S^n \wedge \Gamma_+ \wedge S^m \xrightarrow{\id \wedge \gamma \wedge \id } (\Gamma \times\Gamma)_+ \wedge S^n  \wedge S^m \xrightarrow{m_+ \wedge \mu_{n,m}}\Gamma_+ \wedge S^{n+m},
\end{align*}
and unit maps $S^n \to \Gamma_+ \wedge S^n$ given by $z \mapsto 
1 \wedge z$. Taking inverses in the group induce an anti-involution $\id[\Gamma]$ on the spherical group ring, where  $\id[\Gamma]_n:  \Gamma_+ \wedge S^n  \to \Gamma_+ \wedge S^n$ is given by
 \[
 \id[\Gamma]_n(g \wedge x)=g^{-1} \wedge x
 .\]

 The real topological Hochschild homology space $\THR (A,D)$ of a symmetric ring spectrum $A$ with anti-involution $D$ was defined in \cite[Sect. 10]{HM15} as the geometric realization of a dihedral space. Before reviewing the definition, we recall the notion of dihedral sets, and their realization.  

\begin{definition}\label{didef}
A dihedral object in a category $\mathcal{C}$ is a simplicial object 
\[
X[-]: \triangle^{\op} \to \mathcal{C}
\] 
together with dihedral structure maps $t_k, w_k: X[k] \to X[k]$ such that $t_k^{k+1}=\id$, $w_k^2=\id$, and $t_k w_k=t_k^{-1}\omega_k$. The dihedral structure maps are required to satisfy the following relation involving the simplicial structure maps:
\begin{align*}
d_l w_k=w_{k-1}d_{k-l}, &\quad s_l w_k=w_{k+1}s_{k-l} \ \text{ for }0\leq l\leq k, \\ 
d_l t_k=t_{k-1}d_{l-1}, &\quad s_l t_k=t_{k+1}s_{l-1} \ \ \ \text{ for } 0<l\leq k,  \\
 d_0t_k=d_k, &\quad s_0t_k=t_{k+1}^2s_k.
\end{align*}
\end{definition}

We let $\mathsf{Sets}$ denote the category of sets and set maps and we let $X[-]$ be a dihedral object in $\mathsf{Sets}$. We use Drinfeld's realization of dihedral sets which is naturally homeomorphic as an $O(2)$-space to the ordinary geometric realization of the underlying simplicial set with $O(2)$-action arising from the dihedral structure. We briefly recall the method; see \cite{Sa} and \cite{Dr} for details.

Let $\mathcal{F}$ denote the category with objects all finite subsets of the circle and morphisms set inclusions. A dihedral set $X[-]$ extends uniquely, up to unique isomorphism, to a functor $X[-]: \mathcal{F} \to \mathsf{Sets}$. Given an inclusion $F \subset F' \subset \mathbb{T}$ the degeneracy maps give rise to a map $s_F^{F'}: X[F] \to X[F']$. As a set 
\[ \vert X[-] \vert:= \colim_{F \in \mathcal{F}} X[F].\]
Given an inclusion $F \subset F' \subset \mathbb{T}$ the face maps give rise to a map $d_F^{F'}:X[F'] \to X[F]$. These maps are used to define a topology on the colimit. Finally, the dihedral structure maps give rise to a continuous action of the homeomorphism group $\Homeo(\mathbb{T})$ on the colimit as follows. A homeomorphism $h: \mathbb{T} \to \mathbb{T}$ induces a functor $ \mathcal{F} \to \mathcal{F}$ given on objects by $F \mapsto h(F)$. The dihedral structure maps give rise to a natural transformations $\phi_h: X[-] \Rightarrow X[-] \circ h$. The action of $h$ on the colimit is given as the composition:
 \[ 
 \colim_{F \in \mathcal{F}} X[F] \xrightarrow{\phi_h}  \colim_{F \in \mathcal{F}} X[h(F)] \xrightarrow{\ind_{h}}  \colim_{F \in \mathcal{F}} X[F] . 
 \]
In particular, the subgroup $O(2)<\Homeo(\mathbb{T})$ acts on the realization. Note that the category $\mathcal{F}$ is filtered, and therefore the realization, as a functor valued in $\mathsf{Sets}$, commutes with finite limits. This is also true as functors valued in $\mathsf{Top}$, the category of compactly generated weak Hausdorff spaces; e.g. \cite[Corollary 1.2]{Sa}. The functor $X[-]: \mathcal{F} \to \mathsf{Sets}$ arising from a dihedral set is a special case of an $O(2)$-diagram, as defined below, and we recall have such diagrams behave.

\begin{definition}
Let $H$ be a group and let $J$ be a small category with a left $H$-action. An $H$-diagram indexed by $J$ is a functor $X: J\to \mathsf{Sets}$ together with a collection of natural transformations
\[
\alpha=\{ h\in H \mid \alpha_h: X \Rightarrow X \circ h \}.
\]
such that $\alpha_e=\id$ and $(\alpha_{h'})_h \circ \alpha_h =\alpha_{h'h}$. Here $(\alpha_{h'})_h$ is the natural transformation obtained by restricting $\alpha_{h'}$ along the functor $h:J \to J$.
\end{definition}

The group $H$ acts on the colimit of the diagram by letting $h\in H$ act as the composition
\[ \colim_{J} X \xrightarrow{\alpha_h} \colim_{J} X \circ h \xrightarrow{\ind_{h}}  \colim_{J} X.\]
A natural transformation of $H$-diagrams indexed by $J$ commuting with the group action induces an $H$-equivariant map of colimits. Let $\phi: K \to J$ be a functor between small categories with $H$-actions, such that $\phi \circ h=h \circ \phi$. If $(X: J \to \mathsf{Sets}, \alpha)$ is an $H$-diagram indexed by $J$, then $(X \circ \phi: K \to \mathsf{Sets}, \alpha_{\phi})$ is an $H$-diagram indexed by $K$ and the canonical map
\[\colim_{K} X \circ \phi \xrightarrow{\ind_\phi} \colim_{J} X \]
is $H$-equivariant. Finally, if $J$ is filtered and $N \leq H$ is a normal subgroup acting trivially on $J$ then $(X^N: J \to \mathsf{Sets}, \alpha^N)$ is an  $H/N$-diagram indexed by $J$ and the canonical inclusion $X(j)^N \hookrightarrow X(j)$ induces an $H/N$-equivariant bijection 
\[
\colim_{J} (X^N) \xrightarrow{\cong} \big(\colim_{J} X\big)^N. 
\]

In the following, we would like to apply Drinfeld's realization to the case of a dihedral pointed space $Y[-]$, but a priori Drinfeld's description of the topology on the realization only apply to a simplical set. We solve this problem as follows. We have a bijection of underlying sets from the ordinary geometric realization to the colimit as specified in \cite{Sa} and \cite{Dr}:
\[\Big( \bigvee_{n=0}^\infty Y[n] \times \Delta^n /\sim\Big) \ \to \ \colim_{F \in \mathcal{F}} Y[F] .\]
The bijection is $O(2)$-equivariant and natural with respect to functors of simplicial pointed spaces. 
We give the right hand colimit the topology which makes the above bijection a homeomorphism. Of course, if $Y[-]$ is discrete, then the topology in the description of Drinfeld's realization makes the bijection into a homeomorphism.

When we describe dihedral spaces in this work, we will give the space $X[F]$ at every object $F\in \mathcal{F}$, for each inclusion $F \subset F' \subset \mathbb{T}$, the maps $d_F^{F'}: X[F] \to X[F']$ and $s_F^{F'}: X[F'] \to X[F]$, and the transformations $X[F] \to X[h(F)]$ for $h \in \Homeo(\mathbb{T})$. 

We are now ready to construct to real topological homology of a ring spectrum $A$ with anti-involution $D$. First let $I$ be the category with objects all non-negative integers. The morphisms from $i$ to $j$ are all injective set maps
\[\{1,\dots,i\}  \to \{1,\dots, j\}. \] 
The category $I$ has a strict monoidal product $+: I\times I \to I$ given on objects by addition and on morphisms by concatenation. Given an object $i$ let $\omega_i: i \to i$ denote the involution given by
\[\omega_i(s)=i-s+1, \]
and we define the conjugate of a morphism $\alpha: i \to j$ by $\alpha^{\omega}:=\omega_j \circ \alpha \circ \omega_i^{-1}$.

Let $F\subset F' $ be finite subsets of the circle. We define $s_F^{F'}: I^F \to I^{F'}$ on objects by $(i_z)_{z \in F}  \mapsto (i_{z'})_{z' \in F'}$ where
\[  i_{z'}= \left\{\begin{array}{rl}
 i_{z'} &\mbox{ if $z' \in F$} \\
  0 &\mbox{ if $z' \notin F.$}
       \end{array} \right.\] 
Hence the functor repeats the initial object $0 \in I$ as pictured in the example:
       \begin{center}
    \begin{tikzpicture}[scale=1,cap=round,>=latex]
        \draw[thick] (0cm,0cm) circle(1.2cm);

        \foreach \x in {0,90,180,360} {
                \filldraw[black] (\x:1.2cm) circle(1.5pt);   }
 
        \draw[thick,|->] (3cm,0cm) --  (4cm,0cm);
        \draw[thick] (7cm,0cm) circle(1.2cm);

        \foreach \y in {0,45,90,180,270} {
                \filldraw[black](7cm,0cm)++ (\y:1.2cm) circle(1.5pt);   }
                
                        \draw (0:1.2cm) node[right] {$i_{1}$};
                \draw (90:1.2cm) node[above] {$i_{2}$};
                        \draw (180:1.2cm) node[left] {$i_{3}$};         
                 \draw (7cm,0cm)++(0:1.2cm) node[right] {$i_{1}$};
                \draw (7cm,0cm)++(90:1.2cm) node[above] {$i_{2}$};
                        \draw (7cm,0cm)++(180:1.2cm) node[left] {$i_{3}$};             
               \draw (7cm,0cm)++(45:1.2cm) node[above right] {$0$};
                \draw (7cm,0cm)++(270:1.2cm) node[below] {$0$};
              
    \end{tikzpicture}
        \end{center}   
This also defines the functor on morphisms, since $0$ is the initial object in $I$.

In order to define $d_F^{F'}: I^{F'} \to I^{F}$ we introduce to following notation. Given $z=e^{2\pi i s} \in F$, let $\overline{s}=\max_t\{s \leq t < 1+ s \mid  e^{2 \pi i t } \in F\}$. Then we set 
\[\
\Arc_{z}:= \{ e^{2\pi i t}\mid \overline{s} < t \leq 1+ s\} \subset \mathbb{T},
\]
hence $\Arc_z$ is the circle arc starting from the first element counter clockwise of $z$ and ending at $z$. We define $d_F^{F'}: I^{F'} \to I^{F}$ on objects by $ (i_{z'})_{z' \in F'} \mapsto (i_z)_{z \in F} $ where
\[i_z = \sum_{z' \in \Arc_{z}} i_{z'},\]
and similarly on morphisms. Hence the functor adds together the objects counter clockwise around the circle and concatenate morphisms, as pictured in the example:
       \begin{center}
          \hspace*{0.6cm}
    \begin{tikzpicture}[scale=1,cap=round,>=latex]
        % draw the unit circle
        \draw[thick] (0cm,0cm) circle(1.2cm);

        \foreach \x in {0,90,180,360} {
            % dots at each point
                \filldraw[black](7cm,0cm)++ (\x:1.2cm) circle(1.5pt);   }

        \draw[thick,|->] (3cm,0cm) --  (4cm,0cm);
        % draw the unit circle
        \draw[thick] (7cm,0cm) circle(1.2cm);

        \foreach \y in {0,45,90,180,270} {
            % dots at each point
                \filldraw[black] (\y:1.2cm) circle(1.5pt);   }
                
                        \draw (0:1.2cm) node[right] {$i_{1}$};
                \draw (90:1.2cm) node[above] {$i_{3}$};
                        \draw (180:1.2cm) node[left] {$i_{4}$};
                                                            
                                        \draw (45:1.2cm) node[above right] {$i_2$};
                \draw (270:1.2cm) node[below] {$i_5$};
                        
                                        \draw (7cm,0cm)++(0:1.2cm) node[right] {$i_5 +i_{1}$};
                \draw (7cm,0cm)++(90:1.2cm) node[above] {$i_{2}+i_3$};
                        \draw (7cm,0cm)++(180:1.2cm) node[left] {$i_{4}$};

    \end{tikzpicture}
        \end{center}  
Given a pointed left $O(2)$-space, we define a dihedral space $\THR(A,D;X)[-]$. Let $F \subset \mathbb{T}$ be finite and define a functor $G_X^F: I^F \to \Top_*$ on objects by
\[G_X^F\left((i_z)_{z\in F}\right)= \Map \Big(\bigwedge_{z\in F} S^{i_z}, \bigwedge_{z\in F} A_{i_z}\wedge X  \Big). \]
We define $G^F_X$ on morphisms in the case $\vert F \vert =1$, the general case is similar. 
Let $\alpha: i\to j$ be a morphism in $I$. We write $\alpha$ as a composite $\alpha=\sigma \circ \iota$, where 
\[
\iota: \{1,\dots, i\} \to \{1,\dots,j\} 
\] 
is the standard inclusion and $\sigma \in \Sigma_j$. The map $G^F_X(\alpha)$ is the composite 
\[\begin{tikzpicture}[x=3.5cm,y=1.9cm]
\node(a) at (0,1) {$\Map (S^i,A_i\wedge X) $};
\node(b) at (1,0) {$\Map (S^{j-i} \wedge S^{i}, S^{j-i}\wedge A_i\wedge X)$};
\node(c) at (2,1) {$ \Map (S^j, A_j \wedge X)$};
\node(d) at (3,0) {$\Map (S^j, A_j \wedge X),$};

\draw[->](a) to node [fill=white] {$\scriptstyle S^{j-i} \wedge (-)$} (b);

\draw[->](b) to node [fill=white] {$\scriptstyle (\lambda_{j-i,i}\wedge \id)\circ (-) $} (c);

\draw[->](c) to node [fill=white] {$\scriptstyle\Map (\sigma^{-1}, \sigma\wedge \id)$} (d);

\end{tikzpicture}\]
which is independent of choice of $\sigma$, since $\lambda_{j,i}$ is $\Sigma_j \times \Sigma_i$ - equivariant. We set
\[
\THR(A,D;X)[F] := \hocolim_{I^{F}} G_X^F.
\]
Let $F \subset F'$. We define the map $\THR(A,D;X)[F]\to \THR(A,D;X)[F']$ by first defining a natural transformation of functors $(s')_{F}^{F'}: G^F_X \Rightarrow G^{F'}_X \circ {s}_{F}^{F'}$. At an object $(i_z)_{z\in F} \in I^F$ the natural transformation
 \[
\Map \Big(\bigwedge_{z\in F} S^{i_z} ,\bigwedge_{z\in F} A_{i_z}\wedge X\Big) \to \Map \Big(\bigwedge_{z'\in F'} S^{i_{z'}} ,\bigwedge_{z'\in F'} A_{i_{z'}}\wedge X\Big),
 \]
uses the identity map $1_0: S^0 \to A_0$.  We let the forward map be the composition
 \begin{align*}
\hocolim_{I^{F}} G^F_X  \xrightarrow{(s')_{F}^{F'}} \hocolim_{I^{F}} G^{F'}_X \circ s_{F}^{F'} \xrightarrow{\ind_{s_{F}^{F'}}}
\hocolim_{I^{F'}} G^{F'}_X.
\end{align*}
We define the map $\THR(A,D;X)[F']\to \THR(A,D;X)[F]$ by first defining a natural transformation of functors $ (d')_{F}^{F'}: G^{F'}_X \Rightarrow G^{F}_X \circ d_{F}^{F'}$. At an object $(i_{z'})_{z' \in F'} \in  I^{F'}$ the natural transformation
 \[
 \Map \Big(\bigwedge_{z'\in F'} S^{i_{z'}} ,\bigwedge_{z'\in F'} A_{i_{z'}}\wedge X\Big) \to \Map \Big(\bigwedge_{z\in F} S^{i_{z}} ,\bigwedge_{z\in F} A_{i_{z}}\wedge X\Big),
 \]
uses the multiplication maps in $A$. We let the backward map be the composition
 \begin{align*}
\hocolim_{I^{F'}} G^{F'}_X  \xrightarrow{(d')_{F}^{F'}} \hocolim_{I^{F'}} G^F_X \circ d_{F}^{F'} \xrightarrow{\ind_{d_{F}^{F'}}}
\hocolim_{I^{F}} G^{F}_X.
\end{align*}
 Finally, we describe the action of $\Homeo(\mathbb{T})$. We restrict our attention to the subgroup $O(2)$ and describe the transformations induced by $t \in \mathbb{T}$ and complex conjugation $\omega \in G$. We define functors $t_F :I^F \xrightarrow{} I^{t (F)}$ and $w_F: I^F \xrightarrow{} I^{\omega (F)}$ on objects and morphisms by:
\begin{align*}
t_F: \ \ &(i_z)_{z\in F} \mapsto (i_{t^{-1}(y)})_{y\in t(F)}, \hspace{1cm}  (\alpha_z)_{z\in F} \mapsto (\alpha_{t^{-1}(y)})_{y\in t(F)}, \\
w_F: \ \ &(i_z)_{z\in F} \mapsto (i_{\omega^{-1}(y)})_{y\in \omega(F)}, \hspace{.8cm} (\alpha_z)_{z\in F} \mapsto (\alpha^{\omega}_{\omega^{-1}(y)})_{y\in \omega(F)}.
\end{align*}
Let $\omega_i \in \Sigma_i$ be the permutaion given by $\omega_i(s)=i-s+1$. We define the natural transformations
\[t'_F: G_X^{F} \Rightarrow G_X^{t(F)} \circ t_F, \quad  w'_F: G_X^{F} \Rightarrow G_X^{\omega(F)} \circ w_F,\]
to be the natural transformations which at $(i_z)_{z\in F} \in \Ob(I^F)$ are the unique maps making the diagrams 
 \[
 \begin{tikzpicture}
\node(a){$\displaystyle\bigwedge_{z\in F} S^{i_z}$}; 
\node(b)[right of = a,node distance = 6cm]{$\displaystyle\bigwedge_{z\in F} A_{i_z}\wedge X$};
\node(e)[below of = a,node distance = 2cm]{$\displaystyle\bigwedge_{y\in t(F)} S^{i_{t^{-1}(y)}}$};
\node(f)[right of = e,node distance = 6cm]{$\displaystyle\bigwedge_{y\in t(F)} A_{i_{t^{-1}(y)}}\wedge X$};

\draw[->](a) to node [above]{$g$} (b);

\draw[->](e) to node [left]{} (a);
\draw[->](b) to node [right]{} (f);

\draw[->](e) to node [above]{$t'_F(g)$} (f);

\end{tikzpicture}
 \]
and
 \[\begin{tikzpicture}
 
\node(l){$\displaystyle\bigwedge_{y\in \omega(F)} S^{i_{\omega^{-1}(y)}}$};
\node(m)[right of = l,node distance = 6cm]{$\displaystyle\bigwedge_{y\in \omega(F)} A_{i_{\omega^{-1}(y)}}\wedge X$};

\node(j)[above of = l,node distance = 2cm]{$\displaystyle\bigwedge_{z\in F} S^{i_z}$}; 
\node(k)[right of = j,node distance = 6cm]{$ \displaystyle\bigwedge_{z\in F} A_{i_z}\wedge X$};

\node(h)[above of = j,node distance = 2cm]{$\displaystyle\bigwedge_{z\in F} S^{i_z}$}; 
\node(i)[above of = k,node distance = 2cm]{$ \displaystyle\bigwedge_{z\in F} A_{i_z}\wedge X$};

\draw[->](l) to node [left]{} (j);
\draw[->](j) to node [left]{$\scriptstyle  \underset{z \in F}{\wedge}\omega_{i_z}$} (h);

\draw[->](l) to node [above]{$w_F'(g)$} (m);
\draw[->](h) to node [above]{$g$} (i);

\draw[->](i) to node [right]{$\scriptstyle  \underset{z \in F}{\wedge} D_{i_z} \circ \omega_{i_z}\wedge \id$} (k);

\draw[->](k) to node [left]{} (m);

\end{tikzpicture}\]
commute. The unlabelled vertical maps are appropriate permutations of the smash factors. More precisely,  given families of pointed spaces $\{X_i\}_{i\in I}$ and  $\{X_j\}_{j\in J}$, then a set bijection $\phi: I \to J$ together with a collection of pointed maps $\phi_i: X_i \to X_{\phi(i)}$  induces a map of indexed smash products
\[
\bigwedge_{i\in I} X_i \to \bigwedge_{j\in J} X_j, \quad (x_i)_{i\in I} \mapsto (\phi_i(x_{\phi^{-1}(j)}))_{j \in J}.
\]
In the case at hand, the space maps are all identity maps and the set bijections $t(F) \to F$, $F \to t(F)$, $\omega(F) \to F$ and $F \to \omega(F)$ are given by $t^{-1}$, $t$, $\omega^{-1}$ and $\omega$, respectively. The natural transformations are given at $F\in \mathcal{F}$ as the compositions
 \begin{align*}
t_F: \hocolim_{I^{F}} G_X^{F}  &\xrightarrow{t'_F} \hocolim_{I^{F}} G_X^{t(F)} \circ t_F  \xrightarrow{\ind_{t_F}} \hocolim_{I^{t(F)}} G_X^{t(F)},\\
w_F: \hocolim_{I^{F}} G_X^{F}  &\xrightarrow{w'_F} \hocolim_{I^{F}} G_X^{\omega(F)} \circ w_F  \xrightarrow{\ind_{w_F}}
\hocolim_{I^{\omega(F)}} G_X^{\omega(F)}.
\end{align*}
We have now defined a dihedral space and we let $ \THR(A, D;X)$ be the realization
 \[ 
 \THR(A, D;X):= \colim_{F \in \mathcal{F}} \THR(A,D;X)[F] ,
 \]
with the topology given by identifying the colimit with the ordinary geometric realization. The space $\THR(A, D;X)$ is in fact an $O(2) \times O(2)$-space, where the action by the first factor comes from the dihedral structure and the action by the second factor comes from the $O(2)$-action on $X$. We are interested in $\THR(A,D)$ with the diagonal $O(2)$-action.

\subsection{Fixed points}
Let $\triangle: O(2) \to O(2) \times O(2)$ be the diagonal map.  We wish to study the $O(2)/C_r$-space $\left(\triangle^*\THR(A, D; X)\right)^{C_{r}}$. The image $\triangle(C_r)$ is not normal in $O(2) \times O(2)$, but it is normal in $\triangle(O(2))$, hence we consider $\THR(A, D; X)^{\triangle(C_r)}$ as an $\triangle(O(2))/\triangle(C_r)$-space. 

Let $C_r \mathcal{F}$ denote the full subcategory of $\mathcal{F}$ with objects $C_r\cdot F$ for $F\in \Ob(\mathcal{F})$. This subcategory is both cofinal and stable under the group action, and therefore the inclusion $i: C_r \mathcal{F} \to \mathcal{F} $ induces a bijection of $\triangle(O(2))/\triangle(C_r)$-sets:
\[ \Big(\colim_{ \mathcal{F}}  \THR(A,D;X)[F] \Big)^{\triangle(C_r)} \xleftarrow{\ind_i}\Big(\colim_{ C_r \mathcal{F}}  \THR(A,D;X)[C_r \cdot F]\Big)^{\triangle(C_r)} .\]
This is in fact a homeomorphism, since the corresponding map on classical geometric realizations is the homeomorphism from the realization of the $r$-subdivided simplicial space to the realization of the simplicial space itself. Since the $C_r$-action on the indexing category is trivial, the inclusion 
\[
 \THR(A,D;X)[C_r \cdot F]^{\triangle(C_r)}  \hookrightarrow  \THR(A,D;X)[C_r \cdot F]
\] 
induces an $\triangle(O(2))/\triangle(C_r)$-bijection
\[
\Big(\colim_{ C_r \mathcal{F}}  \THR(A,D;X)[C_r \cdot F]\Big)^{\triangle(C_r)} 
\xleftarrow{\sim} \colim_{ C_r \mathcal{F}} \THR(A,D;X)[C_r \cdot F]^{\triangle(C_r)}.
\]
This is likewise a homeomorphism, since it corresponds to commuting the classical geometric realization and $C_r$-fixed points of a $C_r$-simplicial space, which commutes by \cite[Corollary 11.6]{May72}.  

Note that
\[
 \THR(A,D;X)[C_r \cdot F]=\hocolim_{I^{C_r\cdot F}} G^{C_r \cdot F}_{X}=\Big\vert [k] \mapsto \bigvee_{\underline{i}_0 \to \cdots \to \underline{i}_k} G^{C_r \cdot F}_{X} (\underline{i}_0)\Big\vert.
 \]
 We want to describe the $\triangle(C_r) $-simplicial action. Let $t_r:=e^{\frac{2\pi i }{r}} \in \mathbb{T}$ and recall that we constructed a functor $(t_r)_{C_r \cdot F} :I^{C_r \cdot F} \xrightarrow{} I^{C_r \cdot F}$ and a natural transformation 
\[t'_{C_r \cdot F}: G_X^{C_r \cdot F} \Rightarrow G_X^{C_r \cdot F} \circ t_{C_r \cdot F}. \]
The $t_r$-action on $X$ gives rise to a natural transformation 
\[
X_{t_r}: G_{X}^{C_r \cdot F} \circ (t_r)_{C_r \cdot F}  \Rightarrow G_{X}^{C_r \cdot F} \circ  (t_r)_{C_r \cdot F} .
\] 
The $\triangle(C_r) $-simplicial action is generated by the operator which takes the summand indexed by $\underline{i}_0 \to \cdots \to \underline{i}_k$ to the one indexed by $(t_r)_{C_r \cdot F} (\underline{i}_0) \to \cdots \to (t_r)_{C_r \cdot F}(\underline{i}_k) $ via 
\begin{align}\label{ct1}
G_{X}^{C_r \cdot F} (\underline{i}_0) \xrightarrow{(t_r')_{C_r \cdot F} } G_{X}^{C_r \cdot F} \circ (t_r)_{C_r \cdot F} (\underline{i}_0) \xrightarrow{X_{t_r}}G_{X}^{C_r \cdot F}\circ (t_r)_{C_r \cdot F} (\underline{i}_0).
\end{align}
If a $k$-simplex is fixed, then it must belong to a wedge summand whose index consists of objects and morphisms in $I^{C_r \cdot F}$ which are fixed by $(t_r)_{C_r \cdot F} $. This is exactly the image of the diagonal functor
\[\triangle_r: I^{C_r \cdot F/C_r} \to I^{C_r \cdot F}, \quad (i_{\overline{z}})_{\overline{z}\in C_r \cdot F/C_r} \mapsto (i_{\overline{z}})_{z\in C_r\cdot F},\] 
which is defined similarly on morphism and where $\overline{z}$ denotes the orbit of $z\in C_r \cdot F$. The natural transformation \eqref{ct1} restricts to a natural transformation from $G_{X}^{C_r \cdot F}\circ \triangle_r$ to itself, hence $C_{r}$ acts on $G_{X}^{C_r \cdot F}\circ \triangle_r$ through natural transformations. Since geometric realization commutes with finite limits by \cite[Corollary 11.6]{May72}, we obtain the following lemma:

\begin{lemma}
The canonical map induces a homeomorphism of non-equivariant spaces: 
\[ \hocolim_{I^{C_r \cdot F/C_r}} \left(G_{X}^{C_r \cdot F} \circ \triangle_r\right)^{C_{r}} \xrightarrow{\cong } \left(\underset{I^{C_r \cdot F}} {\hocolim} \ G_{X}^{C_r \cdot F} \right)^{C_{r}}.\]
Furthermore, the maps assemble into an isomorphism of $\triangle(O(2))/\triangle(C_r)$-diagrams $C_r\mathcal{F} \to \mathsf{Top}_*$.

\end{lemma}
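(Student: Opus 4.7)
The plan is to compute the $C_r$-fixed points degree-by-degree on the bar-construction model of the right-hand homotopy colimit, and then invoke the cited fact that geometric realization commutes with finite limits, in particular with $C_r$-fixed points.

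First I would write the right-hand homotopy colimit as the realization of the simplicial space whose $k$-simplices form the wedge
\[
\bigvee_{\underline{i}_0 \to \cdots \to \underline{i}_k} G_X^{C_r \cdot F}(\underline{i}_0),
\]
indexed over composable chains of morphisms in $I^{C_r \cdot F}$. By formula \eqref{ct1}, the generator of $C_r$ permutes the wedge summands according to the action of the functor $(t_r)_{C_r \cdot F}$ on objects and morphisms of $I^{C_r \cdot F}$, and on each value acts by the composite $X_{t_r} \circ (t_r')_{C_r \cdot F}$. The $C_r$-fixed points of such a wedge are the wedge of $C_r$-fixed points of those summands whose indexing chain is itself fixed, and by direct inspection of the definition of $\triangle_r$, a tuple $(i_z)_{z \in C_r\cdot F}$ is fixed by $(t_r)_{C_r\cdot F}$ iff its entries depend only on the $C_r$-orbit of $z$, and similarly for morphisms. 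This is exactly the image of $\triangle_r$, so the level-wise fixed points become
\[
\bigvee_{\underline{j}_0 \to \cdots \to \underline{j}_k \in I^{C_r\cdot F/C_r}} \left(G_X^{C_r\cdot F} \circ \triangle_r\right)^{C_r}(\underline{j}_0),
\]
which is the $k$-th level of the bar construction for $\hocolim_{I^{C_r\cdot F/C_r}} (G_X^{C_r\cdot F} \circ \triangle_r)^{C_r}$. Passing to geometric realizations and using the cited commutation of realization with finite limits yields the claimed homeomorphism, and one verifies that it coincides with the canonical map in the statement.

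For the second claim, that these homeomorphisms assemble into an isomorphism of $\triangle(O(2))/\triangle(C_r)$-diagrams indexed on $C_r\mathcal{F}$, I would verify naturality in $F$ and equivariance separately. The forward and backward maps associated to an inclusion $C_r \cdot F \subset C_r \cdot F'$ are built from $(s')_F^{F'}$, $(d')_F^{F'}$ and induction along $s_F^{F'}$ and $d_F^{F'}$; the latter functors commute strictly with $\triangle_r$ because both the ``insert zeros'' and ``sum along arcs'' recipes respect the $C_r$-action (the arcs within a $C_r$-orbit are permuted cyclically), and the accompanying natural transformations restrict to the $C_r$-fixed subfunctor. Equivariance under the action of $t \in \mathbb{T}$ and $\omega \in G$ follows by analogous inspection of the transformations $t_F$ and $w_F$, both of which commute with $\triangle_r$.

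The main obstacle I anticipate is the bookkeeping in the second step: verifying that a composable chain is fixed iff every object and morphism in it is individually fixed under $(t_r)_{C_r \cdot F}$, and matching the concrete $C_r$-action on $G_X^{C_r\cdot F}\bigl(\triangle_r(\underline{j}_0)\bigr)$---which cyclically rotates the $C_r$-indexed smash factors while simultaneously acting on $X$ via $t_r$---with the abstract $C_r$-action on the composite $G_X^{C_r\cdot F}\circ \triangle_r$ that defines the fixed-point functor appearing on the left-hand side. Once these combinatorial identifications are in place, the remainder of the argument is formal.
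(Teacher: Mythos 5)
Your proposal is correct and follows the same route the paper takes: model the homotopy colimit as the realization of the bar construction, observe that the $\triangle(C_r)$-action permutes wedge summands according to $(t_r)_{C_r\cdot F}$ so the fixed subcomplex is the wedge over fixed chains (exactly those in the image of $\triangle_r$) of level-wise $C_r$-fixed points, and conclude by the cited commutation of geometric realization with finite limits. The paper leaves the naturality-in-$F$ and $\triangle(O(2))/\triangle(C_r)$-equivariance claim as a routine verification, which your final paragraph correctly sketches.
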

Next, we consider the non-equivariant fixed point space $\THR(A, D; X)^{\triangle(D_{r})}$. Let $D_{r} \mathcal{F}$ denote the full subcategory with objects $D_{r}\cdot F$ for $F\in \Ob(\mathcal{F})$. For convenience we restrict further to the cofinal subcategory $D_{r}\mathcal{F}_*$ consisting of objects of $D_r\mathcal{F}$ containing $1$ and $ t_{2r}:= e^{\frac{2 \pi i}{2r}}$. There is a canonical homeomorphism constructed as above
\[
\Big(\colim_{ \mathcal{F}}  \THR(A,D;X)[ F]\Big)^{\triangle(D_{r})} \xleftarrow{\sim} \colim_{ D_{r} \mathcal{F}_*}  \THR(A,D;X)[D_r \cdot F]^{\triangle(D_{r})}.
\]
Note that
\[
\THR(A,D;X)[D_r \cdot F]=\hocolim_{I^{D_{r}\cdot F}} G^{D_{r} \cdot F}_{X}=\Big\vert [k] \mapsto \bigvee_{\underline{i}_0 \to \cdots \to \underline{i}_k} G^{D_{r}\cdot F}_{X} (\underline{i}_0)\Big\vert .
\] 
The $\triangle(D_{r}) $-action is generated by the simplicial operator which takes the summand indexed by $\underline{i}_0 \to \cdots \to \underline{i}_k$ to the one indexed by $w_{D_r \cdot F}(\underline{i}_0) \to \cdots \to w_{D_r \cdot F}(\underline{i}_k)$ via 
\begin{align} \label{t1}
G_{X}^{D_{r}\cdot F} (\underline{i}_0) \xrightarrow{w'_{D_r \cdot F}} G_{X}^{D_{r}\cdot F} \circ w_{D_r \cdot F}(\underline{i}_0) \xrightarrow{X_\omega} G_{X}^{D_{r}\cdot F}\circ  w_{D_r \cdot F} (\underline{i}_0).
\end{align}
and the simplicial operator which takes the summand indexed by $\underline{i}_0 \to \cdots \to \underline{i}_k$ to the one indexed by $(t_r)_{D_r \cdot F}(\underline{i}_0) \to \cdots \to (t_r)_{D_r \cdot F}(\underline{i}_k) $ via 
\begin{align} \label{t2}
G_{X}^{D_{r}\cdot F} (\underline{i}_0) \xrightarrow{(t'_r)_{D_r \cdot F}} G_{X}^{D_{r}\cdot F} \circ (t_r)_{D_r \cdot F}(\underline{i}_0) \xrightarrow{X_{t_r}}G_{X}^{D_{r}\cdot F}\circ (t_r)_{D_r \cdot F}(\underline{i}_0). 
\end{align}
If a $k$-simplex is fixed, then it must belong to a wedge summand whose index consists of objects and morphisms in $I^{D_r \cdot F}$ which are fixed by the $D_r$-action. In order to describe such objects and morphisms, we note that a fundamental domain for the $D_r$-action on $\mathbb{T}$ is given by 
\[
F_{D_r}=\{e^{2\pi i t} \mid  0\leq t \leq 1/2r \}\subset \mathbb{T}
\]
Let $z$ be in the interior of the circle arc $F_{D_r}$. Then $
\overline{z}:= D_r \cdot z= C_r \cdot z \amalg C_r \cdot \omega(z)$ while the orbits of the endpoints $1$ and $t_{2r}$ are of the form
$\overline{1}= C_r \cdot 1=C_r \cdot \omega(1)$ and $\overline{t_{2r}}= C_r \cdot t_{2r}=C_r \cdot \omega(t_{2r}) $.

The objects of $ I^{D_{r} \cdot F}$ are permuted by the generators, while the morphisms are permuted by $t_r$ and both permuted and conjugated by $\omega$.  Let $I^{G}$ denote the subcategory of $I$ with the same objects and all morphisms $\alpha$ which satisfies $\alpha^\omega=\alpha$. Define a functor 
\begin{align*}
\triangle_r^e : (I^{G})^{\{\overline{1}, \overline{t_{2r}}\}} \times I^{D_r F/D_r\setminus \{\overline{1},\overline{t_{2r}} \} } \to I^{D_{r}\cdot F},
\end{align*}
on objects by 
\[
\triangle_r^e((i_{\overline{z}})_{\overline{z}\in D_r \cdot F/D_r})=(i_{\overline{z}})_{z\in D_r\cdot F}
\]
and on morphisms by
\[
\triangle_r^e((\alpha_{\overline{z}})_{\overline{z}\in D_r \cdot F/D_r}) = (\alpha_{z})_{z\in D_r\cdot F},
\]
where
\[  \alpha_{z}= \left\{\begin{array}{rl}
\alpha_{\overline{y}} &\mbox{ if $z\in C_r \cdot y$} \\
( \alpha_{\overline{y}})^{\omega} &\mbox{ if $z \in C_r \cdot \omega(y),$}
       \end{array} \right.\] 
              and $y \in F_{D_r}.$ 
Then a fixed $k$-simplex must have an index which consist of elements and morphisms in the image of $\triangle_r^e$. 

To clarify the ``diagonal'' functor $\triangle_r^e$, we draw an example of the functor 
\[
\triangle_3^e: (I^{G})^{\{\overline{1}, \overline{t_{6}}\}} \times I^{D_3F/D_3\setminus \{\overline{1},\overline{t_{6}} \} } \to I^{D_{3}\cdot F}
\]  
on a tuple of morphisms. In the picture below $D_3\cdot F$ is the subset consisting of all the points on the right hand circle, and the points of the left hand circle are orbit representatives.
              \begin{center}
    \begin{tikzpicture}[scale=1,cap=round,>=latex]
        \draw[thick] (1.5cm,0cm) arc (0:60:1.5cm) ;
             \draw[dashed] (60:1.5cm) arc (60:360:1.5cm) ;

        \foreach \x in {0,15,60} {
            % dots at each point
                \filldraw[black] (\x:1.5cm) circle(1.5pt);   }

        \draw[thick,|->] (3cm,0cm) --  (4cm,0cm);
        
          \draw[dashed] (1.2cm,0cm) --  (1.8cm,0cm);
            \draw[dashed] (60:1.2cm) --  (60:1.8cm);
                   \draw[dashed] (120:1.2cm) --  (120:1.8cm);
                                      \draw[dashed] (180:1.2cm) --  (180:1.8cm);
                   \draw[dashed] (240:1.2cm) --  (240:1.8cm);
                                      \draw[dashed] (300:1.2cm) --  (300:1.8cm);
        % draw the unit circle
        \draw[thick] (7cm,0cm) circle(1.5cm);

        \foreach \y in {0,15,60, 120, 135, 180, 240, 255, 300, 105, 345, 225 } {
            % dots at each point
                \filldraw[black](7cm,0cm)++ (\y:1.5cm) circle(1.5pt);   }

                          \draw[dashed] (8.2cm,0cm) --  (8.8cm,0cm);
                 \draw[dashed] (7cm,0cm)++ (120:1.2cm)--(6.1cm,1.6cm);
                       \draw[dashed] (7cm,0cm)++ (240:1.2cm)--(6.1cm,-1.6cm);
                       
                                           \draw[dashed] (5.2cm,0cm) --  (5.8cm,0cm);
                 \draw[dashed] (7cm,0cm)++ (60:1.2cm)--(7.9cm,1.6cm);
                       \draw[dashed] (7cm,0cm)++ (300:1.2cm)--(7.9cm,-1.6cm);

        % the placement is better this way
            \draw (0:1.7cm) node[right] {$\alpha$};
                \draw (15:1.6cm) node[right] {$\beta$};
                        \draw (60:1.7cm) node[above right] {$\gamma$};
                        
                        \draw (7cm,0cm)++(0:1.7cm) node[right] {$\alpha$};
                \draw (7cm,0cm)++(15:1.6cm) node[right] {$\beta$};
                        \draw (7cm,0cm)++(60:1.7cm) node[above right] {$\gamma$};

          \draw (7cm,0cm)++(124:1.7cm) node[above] {$\alpha$};
                \draw (7cm,0cm)++(135:1.6cm) node[above left] {$\beta$};
                        \draw (7cm,0cm)++(180:1.7cm) node[left] {$\gamma$};
                        
                                  \draw (7cm,0cm)++(244:1.6cm) node[below left] {$\alpha$};
                \draw (7cm,0cm)++(255:1.6cm) node[below] {$\beta$};
                        \draw (7cm,0cm)++(296:1.7cm) node[below right] {$\gamma$};
                        
                                  \draw (7cm,0cm)++(105:1.6cm) node[above] {$\beta^{\omega}$};
                \draw (7cm,0cm)++(345:1.6cm) node[right] {$\beta^{\omega}$};
                        \draw (7cm,0cm)++(225:1.5cm) node[below left] {$\beta^{\omega}$};

    \end{tikzpicture}
        \end{center}
    
        The natural transformations \eqref{t1} and \eqref{t2} restrict to transformations from the functor $G_{X}^{D_{r}\cdot F}\circ \triangle^e_r$ to itself, hence $D_r$ acts on $G_{X}^{D_{r}\cdot F}\circ \triangle^e_r$ trough natural transformation. To ease notation we set $I^{{D_r F/D_r^-} }:=I^{D_r F/D_r\setminus \{\overline{1},\overline{t_{2r}} \} }$. Likewise, we omit the over-lines on the orbits $1$ and $t_{2r}$. We obtain the following lemma: 

\begin{lemma}\label{subcat}
The canonical map induces a homeomorphism: 
\[\hocolim_{ (I^{G})^{\{1, t_{2r}\}} \times I^{{D_r F/D_r^-} } } (G_{X}^{D_{r}\cdot F}\circ \triangle_r^e)^{D_{r}} \xrightarrow{\cong} \left(\hocolim_{I^{D_{r}\cdot F}} G_{X}^{D_{r}\cdot F} \right)^{D_{r}} .\]
Furthermore, the maps assemble into an isomorphism of functors $D_r\mathcal{F}_* \to \Top_*$. 
\end{lemma}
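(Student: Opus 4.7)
The plan is to identify explicitly the $D_r$-fixed part of the simplicial object underlying $\hocolim_{I^{D_r\cdot F}} G_X^{D_r\cdot F}$ before realizing, and then invoke \cite[Corollary 11.6]{May72} to commute geometric realization with the finite limit $(-)^{D_r}$, exactly as in the proof of the preceding $C_r$-fixed point lemma.

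First I would analyze the $k$-simplices. Writing $\hocolim_{I^{D_r\cdot F}} G_X^{D_r\cdot F}$ as the realization of the bar construction
\[
[k]\mapsto \bigvee_{\underline{i}_0\to\cdots\to\underline{i}_k} G_X^{D_r\cdot F}(\underline{i}_0),
\]
the $D_r$-action is the one generated by the operators \eqref{t1} and \eqref{t2}. A non-basepoint $k$-simplex is $D_r$-fixed if and only if it lies in a wedge summand whose indexing chain is fixed by both $(t_r)_{D_r\cdot F}$ and $w_{D_r\cdot F}$, and it is fixed within that summand under the associated self-map of $G_X^{D_r\cdot F}(\underline{i}_0)$. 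Combining with the $C_r$-analysis preceding the lemma, the chains fixed by $(t_r)_{D_r\cdot F}$ are precisely the image of the diagonal $\triangle_r$, parametrized by chains in $I^{D_r\cdot F/C_r}$. Imposing additionally the $\omega$-fixedness requires that at orbits $\overline{z}\in D_r\cdot F/D_r$ the morphism assigned to $z$ and to $\omega(z)$ differ by $\omega$-conjugation; at interior orbits this just forces the two copies to be related, giving one free parameter per orbit, while at the self-dual orbits $\overline{1}$ and $\overline{t_{2r}}$ it forces $\alpha=\alpha^\omega$, that is, the morphism must lie in $I^G$. This is exactly the content of the functor $\triangle_r^e$.

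Next I would observe that once restricted to the image of $\triangle_r^e$, the $D_r$-action on the wedge summand comes entirely from the composite natural transformations \eqref{t1}, \eqref{t2}, which by construction preserve the subfunctor $G_X^{D_r\cdot F}\circ\triangle_r^e$. Hence taking $D_r$-fixed points summand by summand gives the bar construction for $\bigl(G_X^{D_r\cdot F}\circ\triangle_r^e\bigr)^{D_r}$ on the indexing category $(I^G)^{\{1,t_{2r}\}}\times I^{D_r F/D_r^-}$. Since $D_r$ is finite, \cite[Corollary 11.6]{May72} allows us to commute geometric realization with $(-)^{D_r}$, yielding the asserted homeomorphism. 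The canonical inclusion of the fixed simplices into the whole bar construction induces precisely the map in the statement, so it is this map that realizes the homeomorphism.

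Finally I would verify naturality in $F\in D_r\mathcal{F}_*$. An inclusion $F\subset F'$ in $D_r\mathcal{F}_*$ gives, via $s_{D_r\cdot F}^{D_r\cdot F'}$ and $(s')_{D_r\cdot F}^{D_r\cdot F'}$, a $D_r$-equivariant map between bar constructions; one checks on objects that $s_{D_r\cdot F}^{D_r\cdot F'}$ sends the image of $\triangle_r^e$ for $F$ into that for $F'$ (the inserted copies of $0\in I$ respect the $D_r$-orbit structure and are automatically fixed by $\omega$-conjugation since $0$ is initial), and that $(s')^{F'}_F$ restricts to the corresponding natural transformation of the fixed-subfunctors. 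Hence the homeomorphisms assemble into an isomorphism of functors $D_r\mathcal{F}_*\to\Top_*$.

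The main obstacle I expect is the combinatorial bookkeeping in the second step, particularly keeping track of how $\omega$-conjugation on morphisms interacts with the $C_r$-translation on objects at the two distinguished orbits $\overline{1}$ and $\overline{t_{2r}}$, and verifying that no further fixed-point contributions arise at the basepoints of the wedge. Once this is pinned down correctly, everything else reduces to a straightforward application of the commutation of geometric realization with finite limits.
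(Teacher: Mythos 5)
Your proposal is correct and follows essentially the same route as the paper: identify the $D_r$-fixed simplices of the bar construction as those supported on chains in the image of $\triangle_r^e$ (with objects diagonal and morphisms satisfying the $\omega$-conjugation constraints, which at the self-dual orbits $\overline{1}$, $\overline{t_{2r}}$ force membership in $I^G$), and then commute geometric realization with finite limits via \cite[Corollary 11.6]{May72}. The paper presents this as the discussion leading up to the lemma rather than as a separate proof, but the content is the same; the naturality check in $D_r\mathcal{F}_*$ you sketch at the end is exactly the routine verification that is left implicit.
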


\begin{remark} \label{action}
We describe the $D_{r}$-action on 
\begin{align*}
G_{X}^{D_{r}\cdot F} \circ \triangle^e_r(i) =
\Map\Big(\bigwedge_{z\in D_{r}\cdot F} S^{i_{\overline{z}}} , \bigwedge_{z\in D_{r}\cdot F } A_{i_{\overline{z}}} \wedge X \Big) 
\end{align*}
at an object $i \in \ob\big( (I^{G})^{\{1, t_{2r}\}}\times I^{D_r\cdot F /D_r^-}\big)$ arising from the natural transformations (\ref{t1}) and (\ref{t2}). The spaces
\[\bigwedge_{z\in D_{r}\cdot F} S^{i_{\overline{z}}} \quad \text{ and  \quad} \bigwedge_{z\in D_{r}\cdot F } A_{i_{\overline{z}}} \wedge X  \]
are themselves $D_{r}$-spaces. On the left hand side, the generator $t_r$ permutes the smash factors and the generator $\omega$ permutes the smash factors, then acts by $w_{i} \in \Sigma_{i}$ factor-wise. On the right hand side the generator $t_r$ permutes the smash factors and acts on $X$, and the generator $\omega$ permutes the smash factors, then acts by $w_{i} \in \Sigma_{i}$ and the anti-involution $D$ factor-wise and acts on $X$. The $D_{r}$-action on the mapping space is by conjugation. 
\end{remark}

\subsection{Equivariant Approximation Lemma}
In this section we consider the functor $G^F:=G^F_{S^0}$ for simplicity. The approximation lemma proven below also holds if $S^0$ is replaced by $S^V$ for any finite dimensional real orthogonal $O(2)$-representation $V$. Bökstedt's Approximation Lemma states that the canonical inclusion 
\[G^{F}(i) \hookrightarrow \hocolim_{I^{F}} G^{F}\] 
can be made as connected as desired by choosing $i$ coordinate-wise big enough under some connectivity conditions on $A$. Dotto proves in \cite{Do} a $G$-equivariant version of the approximation lemma under some restricted connectivity conditions on $(A,D)$. We use the same connectivity conditions and extend the result to a $D_r$-equivariant version. For an integer $n$, we let $\lceil \frac{n}{2} \rceil$ denote the ceiling of $\frac{n}{2}$. Throughout this rest of this paper we make the following connectivity assumptions on $(A,D)$, which hold for $(\mathbb{S}[\Gamma], \id[\Gamma])$:

\begin{ass}\label{concond} 
Let $(A,D)$ be a symmetric ring spectrum with anti-involution. We assume that $A_n$ is $(n-1)$-connected and that $\left(A_n\right)^{D_n \circ \omega_n}$ is $\left(\lceil \frac{n}{2} \rceil -1 \right)$-connected. Furthermore we assume that there exists a constant $\epsilon \geq 0$, such that the structure map $\lambda_{n,m}: A_n \wedge S^m \to A_{n+m}$ is $(2n+m- \epsilon)$-connected and the restriction of the structure map $\lambda_{n,m}: A_n^{D_n \circ \omega_n} \wedge \left(S^m\right)^{\omega_m} \to \left(A_{n+m}\right)^{D_{n+m}\circ (\omega_n \times \omega_m)}$ is $(n+\lceil \frac{m}{2} \rceil -\epsilon)$-connected.
\end{ass}

Before we state the Equivariant Approximation Lemma, we introduce some notation. Let $f: Z \to Y$ be a map of pointed left $H$-spaces, where $H$ denotes a compact Lie group. We call $f$ $n$-connected respectively a weak-$H$-equivalence if $f^K: Z^K \to Y^K$ is $n$-connected respectively a weak equivalence for all closed subgroups $K\leq H$. For a natural number $N\in \mathbb{N}$ and $i\in \ob( I^F)$ we say that $i \geq N$ if $i_z \geq N$ for all $z \in F$. We define a partial order on the set $\ob( I^F)$ by declaring $(i)_{z\in F} \leq (j)_{z\in F}$ if $i_z\leq j_z$ for all $z\in F$. Given a partially ordered set $J$ we say that a map $\lambda: J \to \mathbb{N}$ tends to infinity on $J$ if for all $N\in \mathbb{N}$ there is a $j_N\in J$ such that $\lambda(j)\geq N$ for all $j\geq j_N.$ 

\begin{prop}[Equivariant Approximation Lemma] \label{EAL} 
Let $(A,D)$ be a symmetric ring spectrum with anti-involution satisfying Assumptions \ref{concond}. Let $F$ be a finite subset of the circle. For part \eqref{part 2} assume further that $1,t_{2r} \in D_r \cdot F$. For part \eqref{part 3} assume further that $1,t_{4r}\in D_{2r} \cdot F$. Then the following holds:
\begin{enumerate}[i]

\item Given $n\geq 0$, there exists $N \geq 0$ such that the $C_r$-equivariant inclusion 
\[
G^{C_r \cdot F} \circ \triangle_r (i) \hookrightarrow \hocolim_{I^{C_r \cdot F}} G^{C_r \cdot F}
\]
is $n$-connected for all $i \in \ob(I^{C_r \cdot F/C_r})$ such that $i \geq N$. \label{part 1}

\item Let $r$ be odd. Given $n\geq 0$, there exists $ N\geq 0$ such that the $D_{r}$-equivariant inclusion 
\[
G^{D_{r} \cdot F} \circ \triangle^e_r (i) \hookrightarrow \hocolim_{I^{D_{r} \cdot F}} G^{D_{r} \cdot F}
\]
is $n$-connected for all $i \in \ob\big( (I^{G})^{\{1, t_{2r}\}}\times I^{D_r \cdot F /D_r^- }\big) $ such that $i \geq N$.\label{part 2}

\item Let $r$ be even. Given $n\geq 0$, there exists $ N\geq 0$ such that the $D_{2r}$-equivariant inclusion 
\[
G^{D_{2r} \cdot F} \circ \triangle^e_{2r} (i) \hookrightarrow \hocolim_{I^{D_{2r} \cdot F}} G^{D_{2r} \cdot F}
\]
is $n$-connected for all $i \in \ob\big( (I^{G})^{\{1, t_{4r}\}}\times I^{D_{2r} \cdot F /D_{2r}^- }\big) $ such that $i \geq N$, when considered as a map of $D_r$-spaces.\label{part 3}
\end{enumerate}
\end{prop}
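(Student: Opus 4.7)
The plan is to reduce each of (i), (ii), (iii) to the classical (non-equivariant) Bökstedt Approximation Lemma. For each closed subgroup $K$ of the relevant acting group, I identify the $K$-fixed points of both sides of the inclusion by the same arguments used in Lemmas 1.1 and 1.2. The map then becomes a non-equivariant inclusion of a single functor value into a homotopy colimit indexed by a filtered product of copies of $I$ and $I^G$, to which Bökstedt's telescope argument applies. The connectivity input at each stage is controlled by the structure maps $\lambda_{n,m}$ of $A$ and by their fixed-point restrictions. Since only finitely many closed subgroups arise, the subgroup-specific bounds can be combined into a single uniform $N$ by taking a maximum.

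For part (i), fix $C_d \leq C_r$. The argument proving Lemma 1.1, applied to the $C_d$-action, identifies the $C_d$-fixed points of $\hocolim_{I^{C_r\cdot F}} G^{C_r\cdot F}$ with a hocolim over $I^{C_r\cdot F/C_d}$ of $(G^{C_r\cdot F})^{C_d}$ restricted along a diagonal functor, and identifies the $C_d$-fixed points of $G^{C_r\cdot F}\circ\triangle_r(i)$ with a single value of this restricted functor. Bökstedt's telescope argument then yields $n$-connectivity of the inclusion, the connectivity input being the $(2n+m-\epsilon)$-connectivity of $\lambda_{n,m}$ from Assumptions 1.4 together with the $(n-1)$-connectivity of $A_n$.

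For parts (ii) and (iii), cyclic subgroups of the dihedral group are handled as above. For a dihedral subgroup $K$ (including reflection subgroups of order two), the argument of Lemma 1.2 identifies fixed points as a hocolim over a product of $I^G$-factors indexed by the boundary orbits $\bar 1$ and $\bar{t_{2r}}$ (respectively $\bar 1$ and $\bar{t_{4r}}$ in part (iii)) and $I$-factors indexed by the interior orbits. Bökstedt's telescope argument then runs coordinate by coordinate on the product: interior factors contribute the $(2n+m-\epsilon)$-connectivity of $\lambda_{n,m}$, while boundary factors contribute the $(n+\lceil m/2\rceil -\epsilon)$-connectivity of the fixed-point restriction $\lambda_{n,m}^{D\omega}\colon A_n^{D_n\omega_n}\wedge(S^m)^{\omega_m}\to A_{n+m}^{D_{n+m}\circ(\omega_n\times\omega_m)}$. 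Both bounds tend to infinity as the coordinates of $\underline i$ grow, so a sufficiently large $N$ can be chosen on each subgroup.

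The main obstacle is part (iii), where we take $D_{2r}$-orbits but require only $D_r$-equivariant connectivity. For $K\leq D_r\leq D_{2r}$, the subgroup $K$ may fold two interior $D_{2r}$-orbits together, so the fixed-point identification mixes a Lemma 1.1-type analysis for the rotational part of $K$ with a Lemma 1.2-type analysis for the reflection part, and one must verify that in this folded regime the telescope connectivity bounds still tend to infinity. The reason for passing to $D_{2r}$-orbits, rather than working directly with $D_r$-orbits as in part (ii), is that for $r$ even the element $-1 = t_r^{r/2}$ lies in $C_r$ and is $\omega$-fixed, creating additional boundary behavior for the $D_r$-action that does not fit the clean product description of Lemma 1.2; refining to $D_{2r}$-orbits restores a clean description, from which the desired $D_r$-equivariant connectivity statement follows by restriction to $K\leq D_r$.
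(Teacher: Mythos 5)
Your overall strategy --- identify $K$-fixed points for each subgroup $K$ via the diagonal-category lemmas and then run a Bökstedt-style telescope argument (what the paper packages as Lemma~\ref{oi} for good indexing categories) --- is the right one and agrees with the paper. The paper streamlines this with a re-indexing observation you do not use: if $st=r$, then $D_r\cdot F = D_s\cdot F'$ and $D_r\cdot F = C_s\cdot F''$ for suitable $F', F''\subset\mathbb{T}$, so that $G^{D_r\cdot F}\circ\triangle^e_r(j)$ literally equals $G^{D_s\cdot F'}\circ\triangle^e_s(j')$, respectively $G^{C_s\cdot F''}\circ\triangle_s(j'')$. This immediately disposes of all proper subgroups via part~\eqref{part 1} and the lower dihedral cases, so the paper only needs the $D_r$-fixed-point connectivity estimate, obtained from Lemma~\ref{subcat}, Lemma~\ref{oi}, the Equivariant Suspension Theorem, and Assumptions~\ref{concond}, with a small trick of first treating coordinate-wise even $j$. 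Your subgroup-by-subgroup plan can be made to work, but this re-indexing is the real shortcut and is worth identifying.

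The genuine gap is in your explanation of part~\eqref{part 3}. You locate the difficulty in $-1 = t_r^{r/2}$ being an $\omega$-fixed element of $C_r$, allegedly causing boundary behavior incompatible with the product description of Lemma~\ref{subcat}. This is not the actual issue: for $r\geq 2$ the point $-1$ is not a boundary point of the $D_r$-fundamental domain (it lies in the orbit $\overline{1}$, which is already a boundary orbit), and Lemma~\ref{subcat} holds for all $r$ without any parity assumption. The reason the statement passes to $D_{2r}$-orbits is a conjugacy phenomenon: for $r$ even, the dihedral subgroups $D_s=\langle t_s,\omega\rangle$ and $D'_s=\langle t_s,t_r\omega\rangle$ lie in two distinct conjugacy classes of $D_r$, and these fuse (become conjugate) inside $D_{2r}$. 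Since $K$-fixed points of a $D_{2r}$-space depend only on the $D_{2r}$-conjugacy class of $K$, one can then check connectivity on $C_s$- and $D_s$-fixed points only, exactly as in part~\eqref{part 2}. Your proposed plan would instead require a direct analysis of the $D'_s$-fixed points in the ``folded'' regime, which you flag as the main obstacle but neither resolve nor correctly diagnose; the conjugacy observation is precisely the missing idea that avoids that analysis.
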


The result depends on the categories $I^F$ and $ I^G$ being good indexing categories and on the connectivity of the functor $G^F$. Part \eqref{part 1} is proven in \cite{BHM}. Part \eqref{part 2} is proven in the case $r=1$ in \cite[Prop. 4.3.2]{Do}. We prove part \eqref{part 2} and \eqref{part 3} below.

\begin{definition}
A good indexing category is a triple $(J, \overline{J}, \mu)$ where $J$ is a small category, $\overline{J} \subset J$ is a full subcategory and $\mu=\{\mu_j: J \to J\}_{j\in \ob(\overline{J})}$ is a family of functors. The data is required to satisfy that for every $j \in \ob( \overline{J})$, there exists a natural transformation $U: \id \Rightarrow \mu_j$ such that $\mu_j \circ U = U \circ \mu_j$. 
\end{definition}
Let $I_{\ev}$ denote the full subcategory of $I$ with objects the even non-negative integers. For $j\in \ob(I_{\ev})$ let $\mu_j: I \to I$ denote the functor 
\[\mu_j(i)= \frac{j}{2} + i + \frac{j}{2}, \quad \quad \mu_j(\alpha)= \id_{\frac{j}{2}} + \alpha + \id_{\frac{j}{2}}. \]
There is a natural transformation $U: \id \Rightarrow \mu_j$ defined by the middle inclusion. The triple $(I,I_{\ev}, \mu)$ is a good indexing category and the structure restricts to $I^{G}$.  In \cite[Lemma 4.3.8]{Do} the following lemma is proved:

\begin{lemma}\label{oi}
Let $(J, \overline{J}, \mu)$ be a good indexing category with initial object $0 \in \ob(J)$ such that $\mu_j(0)=j$ for all $j \in \ob(\overline{J})$. Let $X: J \to \Top_*$ be a functor. If for  $j\in \ob(\overline{J})$ 
\[X(j)=X(\mu_j(0)) \to X(\mu_j(i)),\] 
induced by $0 \to i$ is $n$-connected for all $i \in \ob(J)$, then the canonical map 
\[
X(j) \to \hocolim_{J} X
\]
is $n$-connected. 
\end{lemma}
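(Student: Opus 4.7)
The plan is to factor the canonical inclusion $X(j)\to\hocolim_J X$ through the homotopy colimit of the auxiliary diagram $Y:=X\circ\mu_j\colon J\to\mathsf{Top}_*$ and to prove that each factor is $n$-connected. Since $\mu_j(0)=j$, we have $Y(0)=X(j)$, and the inclusion decomposes as
\[
X(j)=Y(0)\xrightarrow{\iota^{Y}_{0}}\hocolim_{J} Y\xrightarrow{(\mu_j)_{*}}\hocolim_{J} X,
\]
where $\iota^{Y}_{0}$ is the canonical inclusion of the $0$-vertex and $(\mu_j)_{*}$ is the map on homotopy colimits induced by the endofunctor $\mu_j\colon J\to J$. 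I will show that $\iota^{Y}_{0}$ is $n$-connected from the hypothesis, and that $(\mu_j)_{*}$ is a weak equivalence using the natural transformation $U$.

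For the first factor, the initiality of $0\in\ob(J)$ means that for each $i$ there is a unique morphism $0\to i$, and the induced maps $Y(0)\to Y(i)$ assemble into a natural transformation $\underline{Y(0)}\Rightarrow Y$ from the constant diagram at $Y(0)$. By the hypothesis this natural transformation is objectwise $n$-connected, so the induced map $\hocolim_{J}\underline{Y(0)}\to\hocolim_{J} Y$ is $n$-connected. Because $J$ has an initial object, $BJ$ is contractible and $\hocolim_{J}\underline{Y(0)}\simeq Y(0)$, so the composite $\iota^{Y}_{0}$ is $n$-connected.

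For the second factor, whiskering $U\colon\id_{J}\Rightarrow\mu_j$ with $X$ yields a natural transformation $XU\colon X\Rightarrow X\circ\mu_j=Y$ of diagrams $J\to\mathsf{Top}_{*}$, and hence an induced map $U^{*}\colon\hocolim_{J} X\to\hocolim_{J} Y$. An inspection on the simplicial replacement shows that both composites $(\mu_j)_{*}\circ U^{*}$ and $U^{*}\circ(\mu_j)_{*}$ are themselves induced by natural transformations of the relevant diagrams; here the compatibility $\mu_j\circ U=U\circ\mu_j$ is precisely what identifies $U_{\mu_j(j_0)}$ with $\mu_j(U_{j_0})$ at every vertex of a simplex in the nerve of $J$. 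Since any natural transformation of diagrams induces a homotopy between the corresponding maps on homotopy colimits, both composites are homotopic to the identity, so $(\mu_j)_{*}$ is a homotopy equivalence.

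The main obstacle will be the simplicial-replacement bookkeeping required to identify the two composites as maps genuinely induced by natural transformations of diagrams $J\to\mathsf{Top}_{*}$, rather than only pointwise-comparable. The axiom $\mu_j\circ U=U\circ\mu_j$ built into the definition of a good indexing category is exactly what is needed so that the relevant squares commute on the nose, giving the naturality used in the last step.
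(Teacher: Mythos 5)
The paper does not actually prove this lemma---it is quoted from \cite[Lemma 4.3.8]{Do}---so there is no in-text proof to compare against; judged on its own, your argument is correct and is the standard one for this kind of statement. The factorization through $\hocolim_J X\circ\mu_j$, the use of objectwise $n$-connectivity of $\underline{Y(0)}\Rightarrow Y$ together with contractibility of $BJ$, and the identification (via $\mu_j\circ U=U\circ\mu_j$) of $U^*\circ(\mu_j)_*$ with the map induced by the pair $(\mu_j,\,YU)$ are all sound. One sentence is stated loosely: a natural transformation between two diagrams over the \emph{same} index category induces a map, not a homotopy, on homotopy colimits; what you actually need (and what your simplicial-replacement computation supplies) is that a natural transformation $\alpha\colon F\Rightarrow G$ between the \emph{indexing} functors yields a homotopy $\ind_F\simeq \ind_G\circ(X\alpha)_*$, applied here with $F=\id$, $G=\mu_j$, $\alpha=U$.
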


\begin{remark}
If $r$ is odd, then the collection $\{ C_s,D_{s}\}_{s \mid r}$ represents all conjugacy classes of subgroups in $D_{r}$. If $r$ is even, then $D_{r}$ contains 2 conjugacy classes of dihedral subgroups of order $2s$, represented by
\[D_{s}=\left\langle t_s, \omega \right \rangle, \quad D'_{s}=\left\langle t_s,  t_r\omega \right\rangle.\]
Hence the collection $\{ C_s,D_{s}, D'_s\}_{s \mid r}$ represents all conjugacy classes of subgroups in $D_{r}$, when $r$ is even. The subgroups $D_{s}$ and $D'_{s}$ beocmes conjugate when considered as subgroups of the bigger group $D_{2r}$.
\end{remark}

\begin{proof}[Proof of part \eqref{part 2}] 
We let $j \in  \ob\big((I^{G})^{\{1, t_{2r}\}}\times I^{D_r F /D_r^- }\big) $ and consider the inclusion
\[\iota_j:G^{D_{r} \cdot F} \circ \triangle^e_r (j) \hookrightarrow \hocolim_{I^{D_{r} \cdot F}} G^{D_{r} \cdot F}.\]
We must show that the connectivity of the induced map on $H$-fixed points tends to infinity with $j$ for $H \in \{ C_s,D_{s}\}_{s \mid r}$. It suffices by part \eqref{part 1} to prove that the connectivity on $D_{r}$-fixed points tends to infinity with $j$. Indeed, if $s\cdot t=r$, then $D_{r}\cdot F=D_{s}\cdot F'$ and $ D_{r}\cdot F=C_s\cdot F''$ for finite subsets $F', F'' \subset \mathbb{T}$, and
\begin{align*}
G^{D_{r}\cdot F}\circ \triangle^e_r (j) =G^{D_{s}\cdot F'} \circ \triangle_s( j') , \quad
G^{D_{r}\cdot F}\circ \triangle^e_r (j) =G^{C_s\cdot F''} \circ \triangle_s( j'').
\end{align*} 
for some $j' \in \ob\big((I^G)^{\{1, t_{2s}\}}\times I^{D_s \cdot F' /D_s^- }\big) $ and $j'' \in \ob(I^{C_s \cdot F''/C_s})$ both of which tend to infinity with $j$. 

By Lemma \ref{subcat} the restriction of $\iota_j$ to $D_{r}$-fixed points is equal to the inclusion
\[ \left( G^{D_{r} \cdot F} \circ \triangle^e_r (j) \right)^{D_{r}} \hookrightarrow \hocolim_{ (I^{G})^{\{1, t_{2r}\}}\times I^{D_rF/D_r^- }} \left( G^{D_{r} \cdot F} \circ \triangle^e_r \right)^{D_{r}}. \] 
Assume first that $j$ is coordinate-wise even. We let $i\in \ob\big((I^{G})^{\{1, t_{2r}\}}\times I^{D_r\cdot F/D_r^-} \big)$ and define the map $\Lambda$ to be the composite

\[  
\bigwedge_{z\in D_{r}\cdot F } A_{j_{\overline{z}}} \wedge  \bigwedge_{z\in D_{r}\cdot F } S^{i_{\overline{z}}} \xrightarrow{\cong} \bigwedge_{z\in D_{r}\cdot F } A_{j_{\overline{z}}}\wedge S^{i_{\overline{z}}} \xrightarrow{\underset{z\in D_{r}\cdot F }{\bigwedge} \lambda_{j_{\overline{z}},i_{\overline{z}}}} \bigwedge_{z\in D_{r}\cdot F } A_{i_{\overline{z}}+j_{\overline{z}}}.
 \]
If we give the domain the diagonal $D_r$-action and we let $D_{r}$ act on the target as described in Remark \ref{action} but using the permutations $\omega_{j_{\overline{z}}} \times \omega_{i_{\overline{z}}}$ instead of $\omega_{j_{\overline{z}}+i_{\overline{z}}}$, then $\Lambda$ is $D_{r}$-equivariant. The $D_{r}$-map  $G^{D_{r} \cdot F}\circ \triangle_r^e (\mu_j(0)) \to G^{D_{r} \cdot F}\circ \triangle_r^e (\mu_j(i))$ induced by the morphisms $0\to i$ is equal to the composite: 
\[
	\begin{tikzpicture}[x=0cm,y=2cm]
	\node(a) at (0,3) {$\displaystyle\Map\Big(\bigwedge_{z\in D_{r}\cdot F } S^{j_{\overline{z}}},\bigwedge_{z\in D_{r}\cdot F } A_{j_{\overline{z}}}\Big) $};
	\node(b) at (0,2) {$\displaystyle\Map\Big(\bigwedge_{z\in D_{r}\cdot F } S^{j_{\overline{z}}}\wedge \bigwedge_{z\in D_{r}\cdot F } S^{i_{\overline{z}}},\bigwedge_{z\in D_{r}\cdot F } A_{j_{\overline{z}}} \wedge \bigwedge_{z\in D_{r}\cdot F } S^{i_{\overline{z}}} \Big)$};
	\node(c) at (0,1) {$\displaystyle\Map\Big(\bigwedge_{z\in D_{r}\cdot F } S^{j_{\overline{z}}+i_{\overline{z}}} ,\bigwedge_{z\in D_{r}\cdot F } A_{j_{\overline{z}}+i_{\overline{z}}} \Big) $};
	\node(d) at (0,0){$\displaystyle\Map\Big(\bigwedge_{z\in D_{r}\cdot F } S^{j_{\overline{z}}+i_{\overline{z}}} ,\bigwedge_{z\in D_{r}\cdot F } A_{j_{\overline{z}}+i_{\overline{z}}} \Big),$};

	\draw[->](a) to node [right]{$\scriptstyle(-)\wedge (\wedge_{z\in D_{r}\cdot F }S^{i_{{\overline{z}}}})$} (b);

	\draw[->](b) to node [right]{$\scriptstyle\Lambda^*$} (c);

	\draw[->](c) to node [right]{$\scriptstyle\Map(\wedge \alpha_{\overline{z}}^{-1},  \wedge \alpha_{\overline{z}})$} (d);

	\end{tikzpicture}\]
where $\alpha_{\overline{z}}=id_{j_{{\overline{z}}}/2} \times \xi_{j_{{\overline{z}}}/2,i_{{\overline{z}}}}\in \Sigma_{j_{{\overline{z}}}+i_{{\overline{z}}}}$. The first map is induced by the adjunction unit
\[\eta: \bigwedge_{z\in D_{r}\cdot F } A_{j_{\overline{z}}} \to \Map\Big(\bigwedge_{z\in D_{r}\cdot F }S^{i_{{\overline{z}}}}, \bigwedge_{z\in D_{r}\cdot F }S^{i_{{\overline{z}}}}\wedge \bigwedge_{z\in D_{r}\cdot F } A_{j_{\overline{z}}} \Big),\]
followed by the adjunction homeomorphism and a twist homeomorphism. We use the Equivariant Suspension Theorem \ref{suspension} to estimate its connectivity. If $s \mid r$ and we set $A:=\bigwedge_{z\in D_{r}\cdot F } A_{j_{\overline{z}}}$, then we have homeomorphisms induced by the appropriate diagonal maps and connectivity estimates following from Assumptions \ref{concond}:
 \begin{align*}
 A^{C_s} &\cong \Big(A_{j_1} \wedge A_{j_{t_{2r}}} \wedge \Big(\bigwedge_{\overline{z}\in D_r F/D_r^-} A_{j_{\overline{z}}}\Big)^{\wedge 2} \Big)^{\wedge r/s}, \\
 \conn(A^{C_s})& \geq \frac{r}{s}\Big(j_1+j_{t_{2r}} +  \sum_{\overline{z}\in D_r F/D_r^-} 2j_{\overline{z}}\Big)-1, \\
 A^{D_s} &\cong A_{j_1}^{D\circ \omega} \wedge A_{j_{t_{2r}}}^{D \circ \omega} \wedge \left(A_{j_1} \wedge A_{j_{t_{2r}}} \right)^{\wedge \frac{r-s}{2s}} \wedge\Big(\bigwedge_{\overline{z}\in D_r F/D_r^-} A_{j_{\overline{z}}}\Big)^{\wedge \frac{r}{s}}, \\
\conn(A^{D_s} ) &\geq \frac{r}{2s} \Big(j_1+j_{t_{2r}} +\sum_{\overline{z}\in D_r F/D_r^-} 2j_{\overline{z}} \Big) -1.
\end{align*}
By the Equivariant Suspension Theorem \ref{suspension} and Lemma \ref{A2} wee see that
\begin{align*}
&\conn(\eta^{C_s}) \geq  \frac{2r}{s}\Big(j_1+j_{t_{2r}} +  \sum_{\overline{z}\in D_r F/D_r^-} 2j_{\overline{z}}\Big)-1,\\ 
&\conn(\eta^{D_{s}})\geq  \frac{r}{s} \Big(j_1+j_{t_{2r}} + \sum_{\overline{z}\in D_r F/D_r^-} 2j_{\overline{z}}\Big) -1, \\
&\conn((\eta^*)^{D_{r}})\geq \frac{j_1}{2} +\frac{j_{t_{2r}}}{2}  + \sum_{\overline{z}\in D_rF/D_r^-} j_{\overline{z}}.
\end{align*} 
The connectivity of $\Lambda$ on fixed points follows from Assumptions \ref{concond}:
\begin{align*}
\conn(\Lambda^{C_s}) &\geq \min(j) + \frac{r}{s}\Big(j_1+i_1+j_{t_{2r}}+i_{t_{2r}} +  \sum_{\overline{z} \in D_rF/D_r ^-} 2(j_{\overline{z}}+i_{\overline{z}}) \Big) - 1 - \epsilon, \\
\conn(\Lambda^{D_{s}}) &\geq  \text{min}'(j)+ \frac{j_1}{2}+\Big\lceil\frac{i_1}{2}\Big\rceil+\frac{j_{t_{2r}}}{2}+\Big\lceil\frac{i_{t_{2r}}}{2}\Big\rceil + \frac{r-s}{2s}\left(j_1+i_1+j_{t_{2r}}+i_{t_{2r}}\right) \\
&+ \frac{r}{s}\Big(   \sum_{\overline{z} \in D_rF/D_r^- } j_{\overline{z}} +i_{\overline{z}} \Big) -1- \epsilon,
 \end{align*}
where $\min(j)= \min \{ j_{\overline{z}} \mid \overline{z} \in D_rF/D_r \}$ and $\min'(j)= \min \{ \frac{j_1}{2} , \frac{j_{t_{2r}}}{2}, j_{\overline{z}} \mid \overline{z} \in D_rF/D_r \}$. By Lemma \ref{A2} $ \conn((\Lambda^*)^{D_{r}})\geq \min'(j)  - 1 -\epsilon$. Hence by Lemma \ref{oi} there exists $\overline{N}\in \mathbb{N}$ such that the map
\[ \left( G^{D_r \cdot F} \circ \triangle^e_r (j) \right)^{D_{r}} \hookrightarrow  \hocolim_{(I^{G})^{\{1, t_{2r}\} }\times I^{D_r \cdot F/D_r^- } } \left( G^{D_r\cdot F} \circ \triangle^e_r \right)^{D_{r}}\] 
is $n$-connected if $j$ is coordinate-wise even and bigger than $\overline{N}$. 

Let $N:= \overline{N}+1$. Finally let $i \in \ob\big((I^{G})^{\{1, t_{2r}\}}\times I^{D_r \cdot F/D_r^-}\big)$ and assume that $i \geq N$. If $i$ is coordinate-wise even, then we have already seen that the inclusion into the homotopy colimit is $n$-connected. If $i$ is not coordinate-wise even then write $i=i' + j'$, where $i'$ is the coordinate-wise even element given by subtracting 1 from all the odd coordinates of $i$. The unique map $0 \to j'$ induces the horizontal map in the homotopy commutative diagram below: 
\begin{center}
\begin{tikzpicture}[x=3cm,y=2cm]
\node (a) at (0,1) {$\left( G^{D_r F} \circ \triangle^e_r (i') \right)^{D_{r}}$};
\node (c) at (2,1) {$\left( G^{D_r F} \circ \triangle^e_r (i) \right)^{D_{r}}$};
\node (b) at (1,0) {$\displaystyle \hocolim_{ (I^{G})^{\{1, t_{2r}\}}\times I^{D_rF/D_r^- } } \left( G^{D_r F} \circ \triangle^e_r \right)^{D_{r}}$};
\draw[->] (a) to (c);
\draw[->] (a) to (b);
\draw[->] (c) to (b);
\end{tikzpicture}
\end{center}
Since $i' \geq \overline{N}$, the horizontal map and the left diagonal map are $n$-connected, hence so is the right diagonal map as desired.
\end{proof}

\begin{proof}[Proof of part \eqref{part 3}] 
We let $j \in  \ob\big((I^{G})^{\{1, t_{4r}\}}\times I^{D_{2r} F /D_{2r}^- }\big) $ and consider the $D_{2r}$-equivariant inclusion
\[
G^{D_{2r} \cdot F} \circ \triangle^e_{2r} (j) \hookrightarrow \hocolim_{I^{D_{2r} \cdot F}} G^{D_{2r} \cdot F}.
\]
We want to show that the connectivity on $H$-fixed points tends to infinity with $j$ for $H\in \{ C_s,D_{s}, D'_s\}_{s \mid r}$. The subgroups $D_{s}$ and $D'_{s}$ are conjugate inside $D_{2r}$, hence we can reduce to check connectivity on the $H$-fixed points for $H \in \{ C_s,D_{s}\}_{s \mid r}$ and complete the proof as above. 
\end{proof}

Before we conclude this section, we state the following useful lemma, which can be found in \cite[Lemma 4.3.7]{Do}.
\begin{lemma}\label{infinity}
Let $(J, \overline{J}, \mu)$ be a good indexing category, let $X,Y: J \to \Top_*$ be functors, and let $\Phi:  X \Rightarrow Y$ be a natural transformation.  Suppose that for all $j \in \ob(J)$ the map $\Phi_j: X(j) \to Y(j)$ is $ \lambda(j)$-connected for a map $\lambda: \ob(J) \to \mathbb{N}$ that tends to infinity on $\ob(J)$. In this situation,  the induced map on homotopy colimits 
\[\Phi: \hocolim_{J} X \xrightarrow{} \hocolim_{J} Y \]
is a weak equivalence.
\end{lemma}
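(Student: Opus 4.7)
The plan is to show that $\Phi$ induces a bijection on $\pi_n$ for every $n \geq 0$, arguing surjectivity and injectivity in parallel by pushing representatives to indices at which the level-wise connectivity of $\Phi$ exceeds $n+1$.

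For surjectivity, fix $[\alpha] \in \pi_n(\hocolim_J Y)$ represented by a map $\alpha \colon S^n \to \hocolim_J Y$. Since $S^n$ is compact and the homotopy colimit is the geometric realization of the two-sided bar construction on $Y$, the class $[\alpha]$ is represented at a finite stage, and after possibly further stabilization it comes from $\pi_n(Y(j_0))$ for some $j_0 \in \ob(J)$. Choose $j \in \ob(\overline{J})$ with $\lambda(\mu_j(j_0)) \geq n+1$, possible since $\lambda$ tends to infinity on $\ob(J)$. The natural transformation $U \colon \id \Rightarrow \mu_j$ evaluated at $j_0$ gives a morphism $U_{j_0} \colon j_0 \to \mu_j(j_0)$ in $J$, along which we transport the representative to an element of $\pi_n(Y(\mu_j(j_0)))$ with the same image in $\pi_n(\hocolim_J Y)$. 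Because $\Phi_{\mu_j(j_0)}$ is $(n+1)$-connected, it is surjective on $\pi_n$, so the representative lifts up to homotopy to $X(\mu_j(j_0))$ and thus supplies a preimage in $\pi_n(\hocolim_J X)$. Injectivity is analogous: two classes in $\pi_n(\hocolim_J X)$ with equal images, together with a witnessing nullhomotopy of their difference, all compactly factor through some $X(j_0)$ and $Y(j_0)$; pushing along $U_{j_0}$ to $\mu_j(j_0)$ with $\lambda(\mu_j(j_0)) \geq n+1$ and using that $\Phi_{\mu_j(j_0)}$ is injective on $\pi_n$ forces the two classes to agree already in $\pi_n(X(\mu_j(j_0)))$, hence in $\pi_n(\hocolim_J X)$.

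The main obstacle is the informal step ``the class $[\alpha]$ comes from $\pi_n(Y(j_0))$ for some $j_0$''; a map from a compact space into a homotopy colimit a priori only factors through a finite piece of the bar construction, involving several objects and morphisms of $J$ rather than a single term. This is exactly where the good indexing category structure is used: the functor $\mu_j$ together with the natural transformation $U \colon \id \Rightarrow \mu_j$ satisfying $\mu_j \circ U = U \circ \mu_j$ provides a coherent way to shift every simplex of such a finite sub-bar-construction simultaneously into the single term $\mu_j(j_0)$, where the connectivity hypothesis $\lambda(\mu_j(j_0)) \geq n+1$ then applies. One may phrase the argument more cleanly by replacing $\Phi$ by its level-wise mapping cylinder so $\Phi_j$ becomes a cofibration, considering the level-wise cofiber $Y(j)/X(j)$ which is $(\lambda(j)+1)$-connected, and showing that the homotopy colimit of this cofiber vanishes; the same pushing argument via $\mu_j$ and $U$ contracts any $\pi_n$-class.
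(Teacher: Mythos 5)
The key step of your surjectivity argument --- that a class $[\alpha] \in \pi_n(\hocolim_J Y)$ ``comes from $\pi_n(Y(j_0))$ for some $j_0 \in \ob(J)$'' --- is precisely where the argument breaks down, and acknowledging the gap does not close it. The indexing categories appearing here (such as $I$, $I^{G}$ and their products) are not filtered, so compactness of $S^n$ only forces $\alpha$ to factor through a finite subcomplex of the bar construction, which generically involves several objects $Y(j_0),\dots,Y(j_k)$ glued along morphisms and simplices, not a single term. Your proposed repair also misdescribes what $\mu_j$ and $U$ do: they do not ``shift every simplex into the single term $\mu_j(j_0)$''. What they produce is a self-map of $\hocolim_J Y$, namely $\ind_{\mu_j}\circ Y(U)_*$, which is homotopic to the identity; this makes $\hocolim_J Y$ a homotopy retract of $\hocolim_J(Y\circ\mu_j)$, not something that factors through one object.

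The correct proof is a retraction argument rather than a factorization argument. The natural transformation $U:\id \Rightarrow \mu_j$ exhibits $\Phi:\hocolim_J X \to \hocolim_J Y$ as a retract, in the arrow category of the homotopy category, of the map $\Phi\ast\mu_j:\hocolim_J(X\circ\mu_j)\to\hocolim_J(Y\circ\mu_j)$ induced by the levelwise maps $\Phi_{\mu_j(i)}$, because $\ind_{\mu_j}\circ (-)(U)_*$ is homotopic to the identity on both sides and commutes with $\Phi$. Since $\lambda$ tends to infinity, for any $N$ one can pick $j\in\ob(\overline{J})$ so that $\lambda(\mu_j(i))\geq N$ for every $i$; then $\Phi\ast\mu_j$ is levelwise $N$-connected, hence $N$-connected after realization of the bar construction, and a retract of an $N$-connected map is $N$-connected. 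Letting $N\to\infty$ gives that $\Phi$ is a weak equivalence. Your closing cofiber sketch has an independent problem in $\Top_*$ (as opposed to spectra): showing $\hocolim_J(Y/X)$ is weakly contractible does not by itself imply $\Phi$ is a weak equivalence without a simple-connectivity hypothesis or an appeal to Blakers--Massey, neither of which is available at the stated level of generality.
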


\section{The cyclotomic structure of $\THR(A,D)$}
The space $\THR(A,D;S^0)$ is the $0$th space of a fibrant orthogonal $O(2)$-spectrum in the model structure based on the family of finite subgroups of $O(2)$, see Proposition \ref{fibrant}, and furthermore the spectrum is cyclotomic. Before we establish these results, we briefly recall the category of equivariant orthogonal spectra and the fixed points functors.

We let $H$ denote a compact Lie group. By an $H$-representation, we will mean a finite dimensional real inner product space on which $H$ acts by linear isometries. We will work in the category of orthogonal $H$-spectra, defined as diagram $H$-spaces as in \cite[Chapter II.4]{MM}. Let $(H\Top_*, \wedge, S^0)$ denote the symmetric monoidal category of based $H$-spaces and continuous based $H$-equivariant maps. The collection of all $H$-spaces together with all based maps gives rise to a category enriched in $(H\Top_*, \wedge, S^0)$, which we denote $\mathscr{T}_{H}$.

We fix a complete $H$-universe $\mathcal{U}$. If $V$ and $W$ are $H$-representations in $\mathcal{U}$, then $L(V,W)$ denotes the $H$-space of linear isometries from $V$ to $W$ with $H$-action by conjugation. The $H$-bundle $E(V,W)$ is the sub-bundle of the product $H$-bundle $L(V,W) \times W \to W$ consisting of those pairs $(\alpha, w)$ such that $w\in W-\alpha(V)$. Let $\mathscr{J}_H(V,W)$ be the Thom $H$-space of $E(V,W)$. We define composition
\[\circ:  \mathscr{J}_H(V',V'') \times \mathscr{J}_H(V,V') \to \mathscr{J}_H(V,V'')\]
by $(\beta, y) \circ (\alpha,x )=(\beta \circ \alpha, \beta(x)+y)$. The point $(\id_V, 0) \in \mathscr{J}_G^{U}(V,V) $ is the identity morphism.  Let $\mathscr{J}_H^{\mathcal{U}}$ be the category enriched over $(H\Top_*, \wedge, S^0)$ with objects all finite dimensional $H$-representation $V \subset \mathcal{U}$ and morphisms the Thom $H$-spaces $\mathscr{J}_H(V,W)$.

\begin{definition}
An orthogonal $H$-spectrum indexed on $\mathcal{U}$ is an enriched functor 
\[X: \mathscr{J}_{H}^{\mathcal{U}} \to \mathscr{T}_{H}.\] 
A morphism of orthogonal $H$-spectra indexed on $\mathcal{U}$ is an enriched natural transformation. Let $H\Spe_{\mathcal{U}}$ denote the category of orthogonal $H$-spectra indexed on $\mathcal{U}$.
\end{definition}
Let $X$ be an orthogonal $H$-spectrum. For a closed subgroup $K \leq H$ and a non-negative integer $q$ the homotopy groups of $X$ are given as follows:
\begin{align*}
\pi_q^K(X)=\colim_{V \subset U} \pi_q^K(\Omega^V X(V)),  \quad \pi_{-q}^K(X)=\colim_{\mathbb{R}^q \subset V \subset U} \pi_0^K(\Omega^{V-\mathbb{R}^q} X(V)).
\end{align*}
The morphisms of $H$-spectra inducing isomorphism on all homotopy groups are referred to as $\pi_*$-isomorphisms. These are the weak equivalences in the stable model structure on orthogonal $H$-spectra indexed on $\mathcal{U}$ given in \cite[Chapter III, 4.1,4.2]{MM}. Throughout this paper, we let $j_f: X \to X_f$ denote a fibrant replacement functor in the stable model structure, such that $j_f$ is an acyclic cofibration, and we let $j^c: X^c \to X$ denote a cofibrant replacement functor in the stable model structure such that $j^c$ is an acyclic fibration. 

Let $\mathcal{S}$ be a family of subgroups of $H$, that is $\mathcal{S}$ is a collection of subgroups closed under taking subgroups and conjugates. We say that a morphism of othorgonal $H$-spectra indexed on $\mathcal{U}$ is an $\mathcal{S}$-equivalence if it induces isomorphisms on $\pi^K_q(-)$ for all subgroups $K\in \mathcal{S}$ and all $q \in \mathbb{Z}$. The $\mathcal{S}$-equivalences constitutes the weak equivalences in the $\mathcal{S}$-model structure, see \cite[Chapter IV.6]{MM}. 

\subsection{Pointset fixed point functors}
Let $K\leq H$ be a closed subgroup. If $\mathcal{U}$ is a complete $H$-universe, then $\mathcal{U}^K$ is a complete $N(K)/K$-universe. There are two fixed point functors which take an orthogonal $H$-spectrum indexed on $U$ and produce an orthogonal $N(K)/K$-spectrum indexed on $U^K$. We recall the fixed point functors in case of a normal subgroup $N$, see \cite[Chapter V.4]{MM} for details.

We define a category $\mathscr{J}_{H,N}^{U}$ with the same objects as $\mathscr{J}_{H}^{\mathcal{U}}$. The morphism spaces are the $H/N$-spaces of $N$-fixed points 
\[\mathscr{J}_{H,N}^{\mathcal{U}}(V,W):= \mathscr{J}_H^{\mathcal{U}}(V,W)^N.\] 
The composition and identity restrict appropriately making $\mathscr{J}_{H,N}^{U}$ into a category enriched over $(H/N\Top_*, \wedge, S^0)$. Taking $N$-fixed points levelwise takes an orthogonal $H$-spectrum $X$ and produces a functor enriched over $(H/N\Top_*, \wedge, S^0)$:
\[\Fix^N(X): \mathscr{J}_{H,N}^{\mathcal{U}} \to \mathscr{T}_{H/N}.\]     
We have two enriched functors comparing the categories $\mathscr{J}_{H,N}^{\mathcal{U}}$ and $\mathscr{J}_{H/N}^{\mathcal{U}^N}$:
\[\mathscr{J}_{H/N}^{U^N} \xrightarrow{\nu}\mathscr{J}_{H,N}^{\mathcal{U}}\xrightarrow{\phi} \mathscr{J}_{H/N}^{\mathcal{U}^N},\]
The functor $\nu$ takes an $H/N$-representation $V$ to the $H$-representation $q^*V$, where $q: H \to H/N$ is the quotient homomorphism. The functor $\phi$ sends a $H$-representation $V$ to the $H/N$-representation $V^N$, and it sends a morphism $(\alpha, x)$ to the fixpoints morphism $(\alpha^N,x)$. The functors  $\phi$ and $\nu$ induces forgetful functors
\begin{align*}
U_{\phi}&: \Fun_{H/N\Top_*}(\mathscr{J}_{H/N}^{\mathcal{U}^N}, \mathscr{T}_{H/N})  \to \Fun_{H/N\Top_*}(\mathscr{J}_{H,N}^{\mathcal{U}}, \mathscr{T}_{H/N}), \\
U_{\nu}&: \Fun_{H/N\Top_*}(\mathscr{J}_{H,N}^{\mathcal{U}},\mathscr{T}_{H/N})  \to \Fun_{H/N\Top_*}(\mathscr{J}_{H/N}^{\mathcal{U}^N},\mathscr{T}_{H/N}),
\end{align*}
where $\Fun_{H/N\Top_*}(-,-)$ is the category of functors enriched in $(H/N\Top_*, \wedge, S^0)$ and enriched natural tranformations.
Note that $\phi \circ \nu=\id$, hence $U_{\nu} \circ U_{\phi}=\id$. Enriched left Kan extension along $\phi$ gives an enriched functor
\[P_{\phi}: \Fun_{H/N\Top_*}(\mathscr{J}_{H,N}^{\mathcal{U}},\mathscr{T}_{H/N})  \to \Fun_{H/N\Top_*}(\mathscr{J}_{H/N}^{\mathcal{U}^N},\mathscr{T}_{H/N}),\]
which is left adjoint to $U_{\phi}$. 
\begin{definition} 
The fixed point functor is the composite functor
\[(-)^N:=U_{\nu} \circ \Fix^N:  H\Spe_{\mathcal{U}} \to H/N\Spe_{\mathcal{U}^N}. \]
\end{definition}

We note that $X^N(V)=X(q^*V)^N$ for a $H/N$-representation $V$. The fixed point functor preserves fibrations, acyclic fibrations and $\pi_*$-isomorphisms between fibrant objects.

\begin{definition}
The geometric fixed point functor is the composite
\[ (-)^{gN}:=  P_{\phi}\circ \Fix^N:  H\Spe_{\mathcal{U}} \to H/N\Spe_{\mathcal{U}^N} .\]
\end{definition}

The geometric fixed point functor preserves cofibrations, acyclic cofibrations, and $\pi_*$-isomorphisms between cofibrant objects.

Let $\overline{\gamma}:\id \to U_{\phi}P_{\phi}$ denote the unit of the adjunction $(P_{\phi},U_{\phi})$. We have a natural transformation of fixed point functors $\gamma: X^N \to X^{gN}$ given as
\[U_{\nu}(\overline{\gamma} ): U_{\nu}(\Fix^N(X)) \to   U_{\nu}U_{\phi}P_{\phi}(\Fix^N(X)) =P_{\phi}(\Fix^N(X)).\]
%
\begin{comment}
\begin{remark}\label{section}
Let $X$ denote an $H$-spectrum indexed on $\mathcal{U}$ and let $Y$ be a $H/N$-spectrum indexed on $\mathcal{U}^N$. Suppose we are given maps 
\[f: \Fix^N(X) \to Y \circ \phi, \ \quad h: Y \circ \phi \to  \Fix^N(X),\]
such that $f \circ h=\id$. Let $\tilde{f}: \phi^N(X) \to Y$ denote the adjoint of $f$. Then we obtain maps 
\[F:=\tilde{f} \circ \gamma= U_{\nu}(f): X^N \to Y, \ \quad H:=U_\nu(h): Y \to X^N,\]
and $F \circ H =\id.$ 
\end{remark}
\end{comment}

\subsection{The orthogonal spectrum $\THR(A, D)$}
We fix a complete $O(2)$-universe
\[\mathcal{U}=\Big(\bigoplus_{\alpha \geq 0}\underset{n \geq 0}{\bigoplus} \ \mathbb{C}(n)\Big)  \bigoplus \Big( \bigoplus_{\alpha \geq 0}\bigoplus_{n \geq 0} \ \mathbb{C}(n) \oplus \mathbb{C}(-n)\Big). \]
Here $\mathbb{C}(n):=\C$ with $\mathbb{T}$-action given by $z\cdot x=z^{n}x$ for $z\in \mathbb{T}$ and $x\in \C$. On the left hand side $\mathbb{C}(n)$ is the $O(2)$-representation with $\omega$ acting by complex conjugation. On the right hand side $\mathbb{C}(n) \oplus \mathbb{C}(-n)$ is the $O(2)$-representation with $\omega$ acting by $\omega(x,y)=(y,x)$. We see that  
\[
\rho_r^*{\mathbb{C}(n)}^{C_r}= \left\{ \begin{array}{rl}
\mathbb{C}(\frac{n}{r}) &\mbox{ if $r \mid n$} \\
0  & \mbox{ otherwise,}
       \end{array} \right. 
         \]
         and
         \[
\rho_r^*\left(\mathbb{C}(n) \oplus \C(-n)\right)^{C_r}= \left\{ \begin{array}{rl}
\mathbb{C}(\frac{n}{r}) \oplus \C(-\frac{n}{r})&\mbox{ if $r \mid n$} \\
0  & \mbox{ otherwise.}
       \end{array} \right. 
          \]      
 It follows that
 \[
 \rho_r^*\mathcal{U}^{C_r}=\Big(\bigoplus_{\alpha \geq 0} \bigoplus_{n \geq 0, r\mid n} \ \mathbb{C}\left(\frac{n}{r}\right)\Big)  \bigoplus \Big( \bigoplus_{\alpha \geq 0} \bigoplus_{n\geq 0, r \mid n} \ \mathbb{C}\left(\frac{n}{r}\right) \oplus \mathbb{C}\left(-\frac{n}{r}\right)\Big). 
 \]
We let $f_r: \mathcal{U} \xrightarrow{} \rho_r^*\mathcal{U}^{C_r}$ be the $O(2)$-equivariant homeomorphism given by mapping the summand $\mathbb{C}(n)$ indexed by $(\alpha, n)$ in $\mathcal{U}$ to the summand $\mathbb{C}(n)$ indexed by $(\alpha, rn)$ in $\rho_r^*\mathcal{U}^{C_r}$ and by mapping the summand $\mathbb{C}(n) \oplus \C(-n)$ indexed by $(\alpha, n)$ in $\mathcal{U}$ to the summand $\mathbb{C}(n)\oplus \C(-n)$ indexed by $(\alpha, rn)$ in $\rho_r^*\mathcal{U}^{C_r}$.

Let $X$ and $Y$ be pointed $O(2)$-spaces. We define a natural transformation of functors $G_X^F \wedge Y \Rightarrow G^F_{X\wedge Y}$ from $ I^F$ to $\Top_*$ by $(f,y) \mapsto \overline{f}$, where $\overline{f}(t)=f(t)\wedge y$. We compose the  induced map on colimits with the canonical homeomorphism commuting the functor that smashes with a fixed pointed $O(2)$-space and the homotopy colimit functor to obtain maps
\[\left(\hocolim_{I^F} G_X^F\right) \wedge Y \xrightarrow{\cong}  \hocolim_{I^F} (G_X^F \wedge Y) \to \hocolim_{I^F} G^F_{X\wedge Y}.\] 
Since these maps commute with the dihedral structure maps and the $O(2)$-action on $X$ and $Y$, we obtain an $O(2)$-equivariant map
\[\sigma_{X,Y}:\triangle^* (\THR(A,D;X))\wedge Y \to \triangle^*\THR(A,D; X\wedge Y),\]
where $\triangle: O(2) \to O(2) \times O(2)$ denotes the diagonal map and $O(2)$ acts diagonally on the domain.
\begin{definition}
Let $V \subset \mathcal{U}$ be an $O(2)$-representation. Let
\[\THR(A,D)(V)=\triangle^* \THR(A,D; S^V). \]
The group $O(V)$ acts on $\THR(A,D)(V)$ through the action on the sphere $S^V$. The family of $O(V) \rtimes O(2)$-spaces $\THR(A,D)(V)$ together with the structure maps 
\[\sigma_{V,W}:= \sigma_{S^V,S^W}: \THR(A,D)(V)  \wedge S^W \to \THR(A,D)(V\oplus W),\]
defines an orthogonal $O(2)$-spectrum indexed on $\mathcal{U}$, which is denoted $\THR(A,D)$. 
\end{definition}

The following result extends the classical result for the $\mathbb{T}$-spectrum $\THH(A)$; see \cite[Prop. 1.4]{HM97}. Let $\mathscr{F}$ denote the family of finite subgroups of $O(2)$.

\begin{prop}\label{fibrant}
Let $H <O(2)$ be a finite subgroup. For all finite dimensional $O(2)$-representations $V \subset W$  
the adjoint of the structure map
\[\widetilde{\sigma}_{V,W-V}: \THR(A,D)(V) \to \Omega^{W-V} \THR(A,D)(W) \]
induces a weak equivalence on $H$-fixed points. In other words, $\THR(A,D)$ is fibrant in the $\mathscr{F}$-model structure.
\end{prop}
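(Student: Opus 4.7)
Every finite subgroup of $O(2)$ is conjugate to some $C_r$ or $D_r$, so by naturality of $\widetilde{\sigma}_{V,W-V}$ it suffices to treat $H = C_r$ and $H = D_r$. The identifications from Subsection 1.1 (Lemma \ref{subcat} and its $C_r$-analogue) rewrite $\THR(A,D)(V)^{H}$ as a filtered colimit over a cofinal subcategory of $\mathcal{F}$ of homotopy colimits of the form $\hocolim (G^{H\cdot F}_{S^V} \circ \triangle)^{H}$, where $\triangle$ denotes the appropriate diagonal functor ($\triangle_r$ or $\triangle_r^e$). Since filtered colimits commute with taking homotopy groups, it suffices to show for each $F$ in the cofinal subcategory that the adjoint structure map induces a weak equivalence between these two homotopy colimits.

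Next I would apply the Equivariant Approximation Lemma (Proposition \ref{EAL}) to both source and target. Given $n \geq 0$ and $i$ coordinatewise large enough, the canonical inclusions $G^{H\cdot F}_{S^V} \circ \triangle(i) \hookrightarrow \hocolim$ and $G^{H\cdot F}_{S^W} \circ \triangle(i) \hookrightarrow \hocolim$ are $n$-connected on $H$-fixed points. This reduces the question to the pointwise statement that the map
\[
\Big(G^{H\cdot F}_{S^V} \circ \triangle(i)\Big)^{H} \to \Omega^{W-V}\Big(G^{H\cdot F}_{S^W} \circ \triangle(i)\Big)^{H},
\]
adjoint to smashing with the identity on $S^{W-V}$, is $n$-connected for all sufficiently large $i$.

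Finally, I would estimate the connectivity of this pointwise map by the Equivariant Suspension Theorem applied to the $H$-space $\bigwedge_{\overline{z}} A_{i_{\overline{z}}} \wedge S^V$. Its $K$-fixed points, for $K \leq H$, decompose into smash products of copies of $A_n$, of $A_n^{D_n \circ \omega_n}$, and of $(S^V)^K$, via exactly the same kind of decomposition used in the proof of parts \eqref{part 2} and \eqref{part 3} of Proposition \ref{EAL}. Assumptions \ref{concond} then yield connectivities that grow linearly in the coordinates of $i$, while $\Omega^{W-V}$ only subtracts the fixed constant $\dim(W-V)^K$. Hence the pointwise adjoint map has connectivity tending to infinity with $i$, as required. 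The main obstacle is the $D_r$-case, where the combined permutation, involution $\omega_i$, and anti-involution $D$ actions make the fixed-point bookkeeping delicate; but these are the same decompositions already carried out in Subsection 1.2, so no genuinely new computation is needed beyond organizing the estimates.
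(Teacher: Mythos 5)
Your overall strategy follows the paper's closely: reduce to $H\in\{C_r,D_r\}$, pass to the cofinal subcategory $D_r\mathcal{F}_*$ (or $C_r\mathcal{F}$), identify fixed points via Lemma~\ref{subcat}, and feed the adjunction unit into the Equivariant Suspension Theorem and Lemma~\ref{A2} with Assumptions~\ref{concond} providing the connectivity of $\bigwedge A_{i_{\overline z}}$ on $C_s$- and $D_s$-fixed points. The one organizational difference is harmless: you use the Equivariant Approximation Lemma (Prop.~\ref{EAL}) to sandwich the map between pointwise maps $G\circ\triangle(i)\hookrightarrow\hocolim$, whereas the paper applies Lemma~\ref{infinity} directly to the natural transformation $\eta^*$. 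Your sandwiching also implicitly covers the paper's step $\operatorname{can}$ (commuting $\Omega^{W-V}$ past the homotopy colimit over the index category $\overline I$), via Lemma~\ref{A2} and the fixed constant shift by $\dim(W-V)^K$.

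However, there is a genuine gap in the opening reduction. You write that ``filtered colimits commute with taking homotopy groups'' and conclude it suffices to check the map for each fixed $F$ in the cofinal subcategory. But the target is
\[
\Big(\Omega^{W-V}\,\colim_{\mathcal{F}}\,\hocolim_{I^F}G^F_{S^W}\Big)^H,
\]
and the colimit over $\mathcal{F}$ (equivalently over $D_r\mathcal{F}_*$) carries the topology of the ordinary geometric realization, not the naive filtered-colimit topology in $\Top$: the topology is defined using the face maps $d_F^{F'}$, which go the wrong way for a filtered-colimit argument. Consequently, commuting $\Omega^{W-V}\cong\Map_H(S^{W-V},-)$ past this realization colimit is not a formal consequence of compactness of $S^{W-V}$ plus filteredness, and cannot simply be asserted. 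This is precisely the map the paper labels $\gamma^{D_r}$ in the commuting diagram of its proof, and the paper treats it as a separate weak equivalence, citing \cite[Lemma~1.4]{HM97} (a connectivity comparison of loop functors with geometric realizations). Your proposal needs to invoke this step, or an equivalent argument, before the reduction to a single $F$ and the subsequent application of the Approximation Lemma are valid.
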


\begin{proof}
It suffices to prove the statement for $H \in \{ C_r,D_{r}\}_{r\geq 0}$. The case $H=C_r$ is done in \cite[Prop. 1.4]{HM97}. We assume for convenience that $V=0$, the general case is analogous.

Assume $H=D_{r}$ with $r$ odd.   Let $i \in  \ob\big((I^{G})^{\{1, t_{2r}\}}\times I^{D_r\cdot F /D_r ^- }\big)$. We introduce the notation
\begin{align*}
G^{D_rF/D_r}\circ \triangle^e_r(i) \star  S^{W} :=
\Map\Big(\big(\bigwedge_{z\in D_{r}\cdot F} S^{i_{\overline{z}}}\big)\wedge S^{W} , \big(\bigwedge_{z\in D_{r}\cdot F } A_{i_{\overline{z}}}\big)\wedge S^{W}\Big) .
\end{align*}
The domain and target are given the diagonal $O(2)$-action and the action on the mapping space is by conjugation. The map $\widetilde{\sigma}$ becomes the top row in the commutative diagram below, when we restrict to the cofinal subcategory $D_{r}\mathcal{F}_*$.
\begin{center}
\begin{tikzpicture}
  \matrix (m) [matrix of math nodes,row sep=0.8cm,column sep=1.4cm,minimum width=2em] {
\left(\underset{ D_{r} \mathcal{F}_*}{\colim} \ \underset{I^{D_r F}}{\hocolim} \ G^{D_{r}\cdot F} \right)^{D_r}  &  \left( \Omega^{W}\left(\underset{ D_{r} \mathcal{F}_*}{\colim} \ \underset{I^{D_r F}}{\hocolim} \ G_{S^W}^{D_{r}\cdot F} \right) \right)^{D_{r}} \\
   
   &   \left(\underset{ D_{r} \mathcal{F}_*}{\colim} \ \Omega^{W} \ \underset{I^{D_r F}}{\hocolim} \ G_{S^W}^{D_{r}\cdot F} \right)^{D_{r}}\\
   
   &  \left(\underset{ D_{r} \mathcal{F}_*}{\colim} \  \underset{I^{D_r F}}{\hocolim} \ \Omega^{W} \ G_{S^{W}}^{D_{r}\cdot F} \right)^{D_{r}}\\
   
\underset{ D_{r} \mathcal{F}_*}{\colim} \ \underset{\overline{I}}{\hocolim} \ \left(G^{D_{r}\cdot F} \circ \triangle_r^e \right)^{D_{r}}  &  \underset{ D_{r} \mathcal{F}_*}{\colim} \ \underset{\overline{I}}{\hocolim} \  (G^{D_rF/D_r}\circ \triangle^e_r(i) \star  S^{W} )^{D_{r}}  \\};
  \path[-stealth]
    (m-1-1) edge node [above] {$\widetilde{\sigma}$} (m-1-2)   
     (m-4-1) edge node [left] {$\cong$} (m-1-1)  
        (m-4-1) edge node [right] {$\triangle$} (m-1-1)  
     (m-4-2) edge node [left] {$\cong$} (m-3-2)  
       (m-4-2) edge node [right] {$\triangle$} (m-3-2)  
         (m-3-2) edge node [right] {$\operatorname{can}$} (m-2-2) 
            (m-3-2) edge node [left] {$\sim$} (m-2-2) 
             (m-2-2) edge node [right] {$\gamma^{D_{r}}$} (m-1-2) 
                   (m-2-2) edge node [left] {$\sim$} (m-1-2) 
         (m-4-1) edge node [above] {$(\eta^*)^{D_{r}}$} (m-4-2)  
             (m-4-1) edge node [below] {$\sim$} (m-4-2)

    ;
\end{tikzpicture}
\end{center}
where $\overline{I}:=(I^{G})^{\{1, t_{2r}\}}\times I^{D_r\cdot F /D_r ^- }$. The maps labelled $\triangle$ are homeomorphisms by Lemma \ref{subcat}. The bottom map is induced from the adjunction unit
\[\eta: \bigwedge_{z\in D_{2r}\cdot F } A_{i_{\overline{z}}}  \to \Map \big (S^W,  S^{W} \wedge \bigwedge_{z\in D_{2r}\cdot F } A_{i_{\overline{z}}} \big)\]
followed by a homeomorphism. The connectivity of $(\eta^*)^{D_r}$ can be estimated using the equivariant suspension Theorem \ref{suspension} and Lemma \ref{A2}:
\[\conn((\eta^*)^{D_{r}})\geq \Big\lceil\frac{i_1}{2}\Big\rceil+\Big\lceil\frac{i_{t_{2r}} }{2}\Big\rceil + \sum_{\overline{z} \in D_rF/D_r^-}i_{\overline{z}} .\]
Since the connectivity tends to infinity with $i$, the induced map on homotopy colimits is a weak equivalence by Lemma \ref{infinity}. A similar argument shows that $\operatorname{can}$ is a weak equivalence. Finally $\gamma^{D_{r}}$ is a weak equivalence by \cite[Lemma 1.4]{HM97}.

The case $H=D_r$ with $r$ even, can be done analogously by restricting to the cofinal subcategory $D_{2r}\mathcal{F}_*$, compare Lemma \ref{EAL} part \eqref{part 3}.
\end{proof}

\subsection{The cyclotomic structure}
The $\mathbb{T}$-spectrum underlying $\THR(A, D)$ is cyclotomic; see \cite[Def. 1.2, Prop. 1.5]{HM97}. We prove that the cyclotomic structure is compatible with the $G$-action, i.e $\THR(A,D)$ is an $O(2)$-cyclotomic spectrum. To define this notion, we must first introduce some notation.  Let
\[
\rho_r: O(2)\to O(2)/C_r 
\] 
be the root isomorphism given by $\rho_r(z)=z^{\frac{1}{r}}C_r$ if $z\in \mathbb{T}$ and $\rho_r(x)=x$ if $x\in G$. The isomorphism $\rho_{r}$ induces isomorphisms of enriched categories 
\[ \mathscr{J}_{\rho_r^*}: \mathscr{J}_{O(2)/C_r}^{\mathcal{U}^{C_r}}  \xrightarrow{\cong} \mathscr{J}_{O(2)}^{\rho_r^* U^{C_r}}, \quad \mathscr{T}_{\rho_r^*}:\mathscr{T}_{O(2)/C_r} \xrightarrow{\cong} \mathscr{T}_{O(2)}.\]
The isomorphism of universes $f_r: \mathcal{U} \xrightarrow{\cong } \rho_r^* \mathcal{U}^{C_r}$ defined earlier induces an isomorphism of enriched categories
\[ 
f_r: \mathscr{J}_{O(2)}^{\mathcal{U}}  \xrightarrow{\cong} \mathscr{J}_{O(2)}^{\rho_r^* \mathcal{U}^{C_r}}.
\]
If $Y$ is a $O(2)/C_r$-spectrum indexed on $\mathcal{U}^{C_r}$, then we let $\rho_r^*Y$ be the $O(2)$-spectrum indexed on $\mathcal{U}$ given by the composition
\[ \mathscr{J}_{O(2)}^{\mathcal{U}}  \xrightarrow{f_r} \mathscr{J}_{O(2)}^{\rho_r^* U^{C_r}} \xrightarrow{\mathscr{J}_{\rho_r^*}^{-1}}   \mathscr{J}_{O(2)/C_r}^{\mathcal{U}^{C_r}}   \xrightarrow{Y} \mathscr{T}_{O(2)/C_r} \xrightarrow{ \mathscr{T}_{\rho_r^*}} \mathscr{T}_{O(2)}.\]

\begin{definition}\label{defcy}
Am $O(2)$-cyclotomic spectrum is an orthogonal $O(2)$-spectrum $X$ together with maps of orthogonal $O(2)$-spectra for all $r\geq 0$,
\[T_r: \rho_r^{*} (X^{gC_r})  \to X,\]
such that the composite from the derived geometric fixed point functor
\[ \rho_r^{*} ( X^c)^{gC_r}  \to \rho_r^{*} (X^{gC_r}) \xrightarrow{T_r} X \]
is an $\mathscr{F}$-equivalence. Furthermore we require that for all $s,r\geq 1$ the following diagram commutes:
 \[\begin{tikzpicture}[x=4.5cm,y=1.8cm]
\node (a) at (0,1) {$\rho_{rs}^{*} (X^{gC_{rs}})$};
\node (b) at (1,1) {$\rho_s^{*} ((\rho_r^{*} (X^{gC_r}))^{gC_s})$};
\node (c) at (2,1) {$\rho_s^{*} (X^{gC_s})$};
\node (d) at (0,0) {$\rho_r^{*} ((\rho_s^{*} (X^{gC_s}))^{gC_r})$};
\node (e) at (1,0) {$\rho_r^{*} (X^{gC_r})$};
\node (f) at (2,0) {$X.$};

\draw[->](a) to node [above]{} (b);
\draw[->](b) to node [above]{$\rho_s^{*} (T_r^{gC_s})$} (c);
\draw[->](c) to node [right]{$T_r$} (f);
\draw[->](a) to node [below]{} (d);
\draw[->](d) to node [below]{$\rho_r^{*} (T_s^{gC_r})$} (e);
\draw[->](e) to node [below]{$T_s$} (f);

\end{tikzpicture}\]
\end{definition}
The geometric fixed point functor $X^{gC_r}$ is defined as a left Kan extension via 
\[\phi :  \mathscr{J}_{O(2);C_r}^{U}\to \mathscr{J}_{O(2)/C_r}^{U^{C_r}}, \quad\phi(V)=V^{C_r},\] 
of the functor $\Fix^{C_r} X$. To construct cyclotomic structure maps it therefore suffices to construct $O(2)$-equivariant maps 
\[\tilde{T}_r: \rho_r^*(X(V)^{C_r}) \to X(\rho_r^*(V^{C_r})) ,\]
for each representation $V$ satisfying certain compatibility conditions to ensure commutativity of the diagram above; compare \cite[Lemma 1.2]{HM97}. 

\begin{example}
The $O(2)$-equivariant sphere spectrum $\mathbb{S}$ is an $O(2)$-cyclotomic spectrum with structure maps arising from the identity maps: $\rho_r^*((S^{V})^{C_r}) \to S^{\rho_r^*(V^{C_r})}. $ 
\end{example}

Both in the next example and in the construction of the cyclotomic structure map for $\THR(A,D)$, we will need to pull back the group action on a diagram along a group homomorphism. Let $g: K \to H$ be a group homomorphism.  Let $(X: J \to \mathsf{Sets}, \alpha)$ be a $H$-diagram indexed by $J$. Let $g^*J$ denote the category $J$ with $K$-action defined by $k:=g(k): J \to J$. We have natural transformations 
\[
g^*\alpha_k:=\alpha_{g(k)}: X \Rightarrow X \circ g(k).
\] 
This gives a $K$-diagram indexed by  $ g^*J $ and there is a unique isomorphism of $K$-sets:
\[\underset{g^*J}{\colim} \ g^*X \cong g^*(\underset{J}{\colim} \ X).\]

\begin{example}
Let $\Gamma$ be a topological group. The geometric realization of the dihedral bar construction on $\Gamma$ is an $O(2)$-space which we denote $\Bdi\Gamma$. As explained in the introduction, there is an $\mathscr{F}$-equivalence from  $\Bdi\Gamma$ to the free loop space on the classifying space, $\Map(\mathbb{T}, B\Gamma)$, if we give the free loop space an $O(2)$-action as follows: The group $O(2)$ acts on $\mathbb{T}$ by multiplication and complex cojugation. Taking inverses in the group induces a $G$-action on $B\Gamma$, and we view $B\Gamma$ as an $O(2)$-space with trivial $\mathbb{T}$-action. We let $O(2)$ act on the free loop space by the conjugation action. 

There are $\mathbb{T}$-equivariant homeomorphisms
\[ p_r: \Bdi\Gamma\xrightarrow{\cong} \rho_{r}^* \big( \Bdi\Gamma^{C_r} \big).\]
are constructed in \cite[Sect. 2.1.7]{Ma}. We run trough the construction of $p_r$ to ensure that it is $O(2)$-equivariant. Since we already know that $p_r$ is continuous, we will not keep track of continuity. First we recall the dihedral bar constrcution on $\Gamma$. For $F\in \mathcal{F}$, let $\Bdi[F]= \Gamma^F$. Let $F\subset F'$ be finite subsets of the circle. We define $s_F^{F'}:\Gamma^F \to \Gamma^{F'}$ by repeating the identity element $1 \in \Gamma$ as pictured in the example:
\begin{center}
    \begin{tikzpicture}[scale=1,cap=round,>=latex]
        % draw the unit circle
        \draw[thick] (0cm,0cm) circle(1cm);

        \foreach \x in {0,90,180,360} {
            % dots at each point
                \filldraw[black] (\x:1cm) circle(1.5pt);   }

        % the placement is better this way
        \draw (-1.3cm,0cm) node[above=1pt] {$g_3$}
              (1.3cm,0cm)  node[above=1pt] {$g_1$}
              (0cm,1.3cm)  node[fill=white] {$g_2$};
              
        \draw[thick,|->] (2.5cm,0cm) --  (3.5cm,0cm);
        % draw the unit circle
        \draw[thick] (6cm,0cm) circle(1cm);

    \filldraw[black] (5cm,0cm) circle(1.5pt);   
        \filldraw[black] (7cm,0cm) circle(1.5pt);   
            \filldraw[black] (6cm,1cm) circle(1.5pt);   
              \filldraw[black] (6.7cm,0.7cm) circle(1.5pt);   
                   \filldraw[black] (6cm,-1cm) circle(1.5pt);     
        % the placement is better this way
        \draw (4.7cm,0cm) node[above=1pt] {$g_3$}
              (7.4cm,0cm)  node[above=1pt] {$g_1$}
            (6cm,-1.3cm) node[fill=white] {$1$}
            (6.95cm,0.95cm) node[fill=white] {$1$}
              (6cm,1.3cm) node[fill=white] {$g_3$};

    \end{tikzpicture}
        \end{center}
We define $d_F^{F'}:\Gamma^{F'} \to \Gamma^F$ by pushing the group elements counter clockwise around the circle using the multiplication in $\Gamma$, as pictured in the example:
\begin{center}
    \begin{tikzpicture}[scale=1,cap=round,>=latex]
        % draw the unit circle
        \draw[thick] (0cm,0cm) circle(1cm);

        \foreach \x in {0,45,90,180,270,360} {
            % dots at each point
                \filldraw[black] (\x:1cm) circle(1.5pt);   }

        \foreach \x/\xtext in {
            % the coordinates for the first quadrant
            45/{g_2}}
                \draw (\x:1.5cm) node[fill=white] {$\xtext$};
        % the placement is better this way
        \draw (-1.3cm,0cm) node[above=1pt] {$g_4$}
              (1.3cm,0cm)  node[above=1pt] {$g_1$}
              (0cm,-1.3cm) node[fill=white] {$g_5$}
              (0cm,1.3cm)  node[fill=white] {$g_3$};
              
        \draw[thick,|->] (2.5cm,0cm) --  (3.5cm,0cm);
        % draw the unit circle
        \draw[thick] (6cm,0cm) circle(1cm);

    \filldraw[black] (5cm,0cm) circle(1.5pt);   
        \filldraw[black] (7cm,0cm) circle(1.5pt);   
            \filldraw[black] (6cm,1cm) circle(1.5pt);    
        % the placement is better this way
        \draw (4.7cm,0cm) node[above=1pt] {$g_4$}
              (7.4cm,0cm)  node[above=1pt] {$g_5g_1$}
              (6cm,1.3cm) node[fill=white] {$g_2g_3$};
              
    \end{tikzpicture}
        \end{center}
We describe the natural transformations $t_F:\Gamma^F \xrightarrow{} \Gamma^{t(F)}$ and $ \omega_F: \Gamma^F \xrightarrow{} \Gamma^{\omega ( F) }$, where $t\in \mathbb{T}$ and $\omega$ is complex conjugation. They both permute a tuple of group elements accordingly and $\omega$, in addition, takes each label to its inverse:
\begin{align*}
t_F\left((g_z)_{z\in F}\right) = (g_{t^{-1}(y)})_{y\in t(F)}, \quad \omega_F \left( (g_z)_{z\in F} \right)=((g_{\omega^{-1}(y)})^{-1})_{y\in \omega(F)}.
\end{align*}
The geometric realization $\Bdi\Gamma:=\colim_{F\in \mathcal{F}} \Gamma^F$ is an $O(2)$-space.

We proceed to construct the map $p_r$. Let $\overline{\mathcal{F}}$ denote the category of finite subsets of $\mathbb{T}/C_r$ and set inclusions. We have $O(2)/C_r$-equivariant bijections: 
\begin{spreadlines}{0.8em}
\begin{align*}
\left( \underset{ F \in \mathcal{F}}{\colim} \ \Gamma^F \right)^{C_r} &\xleftarrow{\cong}
\left(\underset{C\cdot F \in C_r\mathcal{F}}{\colim} \ \Gamma^{C_r\cdot F}\right)^{C_r} \\ &\xleftarrow{\cong} \underset{C_r\cdot F \in C_r \mathcal{F}}{\colim} \ \left(\Gamma^{C_r\cdot F}\right)^{C_r} \\
&\xleftarrow{\cong} \underset{C_r\cdot F \in C_r \mathcal{F}}{\colim} \ \Gamma^{C_r\cdot F/C_r} \\
&\xrightarrow{\cong} \underset{\overline{F} \in \overline{\mathcal{F}}}{\colim} \ \Gamma^{\overline{F}}.
\end{align*}
\end{spreadlines}
The first map is induced by the inclusion of categories $C_r \mathcal{F} \hookrightarrow \mathcal{F}$, the second map is induced by the inclusion $(\Gamma^{C_r\cdot F})^{C_r} \hookrightarrow \Gamma^{C_r\cdot F}$, the third map is induced by the diagonal isomorphism $\Gamma^{C_r\cdot F/C_r} \to \left(\Gamma^{C_r\cdot F}\right)^{C_r}$, and, finally the last map is induced by the $O(2)/C_r$-equivariant isomorphism of categories $ C_r\mathcal{F} \to \overline{\mathcal{F}}$ given on objects by $C_r\cdot F \mapsto C_r \cdot F /C_r$. If we pull back the diagram $\overline{F} \mapsto \Gamma^{\overline{F}}$ along $\rho_r$, then the isomorphism of categories $\mathcal{F} \to \overline{\mathcal{F}}$ given by $ F \mapsto \rho_r(F)$ induces an $O(2)$-bijection:
\begin{spreadlines}{0.8em}
\begin{align*}
\rho_r^* \left(\underset{\overline{F} \in \overline{\mathcal{F}}}{\colim} \ \Gamma^{\overline{F}} \right) \xleftarrow{\cong}
\underset{F \in \mathcal{F}}{\colim} \ \Gamma^{\rho_r(F)}
\end{align*}
\end{spreadlines}
Finally the isomorphism of dihedral sets $\Gamma^F \to \Gamma^{\rho_r(F)}$ induces an $O(2)$-bijection
\begin{spreadlines}{0.8em}
\begin{align*}
\underset{F \in \mathcal{F}}{\colim} \ \Gamma^{\rho_r(F)} \xleftarrow{\cong} \underset{F \in \mathcal{F}}{\colim} \ \Gamma^{F}.
\end{align*}
\end{spreadlines}
We combine the maps above to obtain $p_r$, which is indeed $O(2)$-equivariant. We give the suspension spectrum $\Sigma^{\infty}_{O(2)} \Bdi\Gamma_+ $ the structure of an $O(2)$-cyclotomic spectrum by letting $\tilde{T}_r$ be the map:
\[
 S^{\rho_r^*(V^{C_r})} \wedge \rho_r^* (\Bdi\Gamma)_+^{C_r}\xrightarrow{\id \wedge p_r^{-1}} S^{\rho_r^*(V^{C_r})} \wedge \Bdi\Gamma_+ . \]
\end{example}

Let $V$ be a finite dimensional $O(2)$-representation and let $r\geq 1$. We run through the construction of the map 
\begin{align*}
\tilde{T_r}:\rho_r^*\big(\THR(A,D)(V)^{C_r}\big) \to \THR(A,D)\big(\rho_r^*(V^{C_r})\big).
\end{align*}
defined in \cite[Sect. 1.5]{HM97}, to check that it is $O(2)$-equivariant. Since it is already known that the map is continuous, we will not keep track of continuity. Consider the $O(2)/C_r \times O(2)/C_r$-diagram, where the second copy of $O(2)/C_r$ acts trivially on the category: 
 \[C_r\mathcal{F}\to \Top_*, \quad {C_r\cdot F} \mapsto \THR(A,D; S^{V^{C_r}})[C_r\cdot F/C_r] \] 
We pull the diagram back along 
\[
D:\triangle(O(2))/\triangle(C_r) \to O(2)/C_r \times O(2)/C_r, \quad D((a,a)\triangle(C_r))=(aC_r,aC_r),
\] 
and define a natural transformation of $\triangle(O(2))/\triangle(C_r)$-diagrams by
 \begin{align*}
 \THR(A,D; S^{V} )&[C_r \cdot F]^{\triangle(C_r)} \xleftarrow{\cong} \underset{I^{C_r \cdot F/C_r}}{\hocolim} \ \left( G^{C_rF}_{S^V} \circ \triangle_r \right)^{\triangle(C_r)}  \\
  &=\underset{I^{C_r \cdot F/C_r}}{\hocolim} \ \Big( \Map \Big( (\bigwedge_{\overline{z}\in C_r \cdot F/C_r} S^{i_{\overline{z}}} )^{\wedge r}, (\bigwedge_{{\overline{z}}\in C_r \cdot F/C_r} A_{i_{\overline{z}}} )^{\wedge r}\wedge S^V\Big) \Big)^{\triangle(C_r)}  \\
   &\xrightarrow{\res} \underset{I^{C_r \cdot F/C_r}}{\hocolim} \ \Map \Big( \bigwedge_{{\overline{z}}\in C_r \cdot F/C_r} S^{i_{\overline{z}}} , \bigwedge_{{\overline{z}}\in C_r \cdot F/C_r} A_{i_{\overline{z}}} \wedge S^{V^{C_r}}\Big) \\
   &=\THR(A,D; S^{V^{C_r}})[C_r\cdot F/C_r] 
 \end{align*}
The map $\res$ is induced by the natural transformation obtained by restricting a map to the fixed point space: $\Map(X,Y)^{C_r} \to \Map(X^{C_r}, Y^{C_r})$. The natural transformation above induces a map of colimits which we also denote $\res$. 

If $d: O(2)/C_r \to \triangle(O(2)) /{\triangle}(C_r)$ denotes the isomorphism $aC_r \mapsto (a,a)\triangle(C_r)$, then there is a commutative diagram of homomorphisms:
\begin{center}
\begin{tikzpicture}
  \matrix (a) [matrix of math nodes,row sep=2em,column sep=2em,minimum width=2em]  {
O(2) & O(2)/C_r & \triangle(O(2))/\triangle(C_r)\\
O(2) \times O(2)  &  & O(2)/C_r \times O(2)/C_r.\\};
  \path[-stealth] (a-1-1) edge node [right] {$\triangle$} (a-2-1)
  (a-1-2) edge node [above] {$d$} (a-1-3)
  (a-1-3) edge node [right] {$D$} (a-2-3)
   (a-2-1) edge node [above] {$\rho_r \times \rho_r$} (a-2-3)
    (a-1-1) edge node [above] {$\rho_r$} (a-1-2);
     .
\end{tikzpicture}
\end{center}
We have a string of $O(2)/C_r$-equivariant set maps:
\begin{spreadlines}{0.8em}
\begin{align*}
   \rho^*_r \Big(\triangle^* \colim_{ F \in \mathcal{F}} \THR(A,&D; S^{V} )[F] \Big)^{C_r}  =
(d \circ \rho_{r})^*    \left(\colim_{ F \in \mathcal{F}} \THR(A,D; S^{V} )[F] \right)^{\triangle(C_r)}  \\
&\xleftarrow{\cong} (d \circ \rho_{r})^*   \Big(\colim_{C_r\cdot F \in C_r\mathcal{F}}\THR(A,D; S^{V} )[C_r \cdot F]^{\triangle(C_r)} \Big)\\ 
&\xrightarrow{\res}  (d \circ \rho_{r})^*\Big( D^*  \Big( \colim_{C_r\cdot F \in C_r\mathcal{F}} \THR(A,D; S^{V^{C_r}})[C_r\cdot F/C_r]  \Big) \Big)  \\
&=   (\triangle^*)\Big(\rho_r^* \times \rho_r^*  \Big( \colim_{C_r\cdot F \in C_r\mathcal{F}} \THR(A,D; S^{V^{C_r}})[C_r\cdot F/C_r]  \Big) \Big)  \\
&\xleftarrow{\cong} (\triangle^*)\Big(\rho_r^* \times \rho_r^* \Big(  \colim_{\overline{F} \in \overline{\mathcal{F}}} \THR(A,D; S^{V^{C_r}})[\overline{F}]   \Big) \Big) \\ 
&\xleftarrow{\cong} \triangle^* \colim_{F \in \mathcal{F}} \THR(A,D; S^{\rho_r^*(V^{C_r})})[ \rho_r(F)]  \\
&\xleftarrow{\cong} \triangle^* \colim_{F \in \mathcal{F}} \THR(A,D; S^{\rho_r^*(V^{C_r})})[ F]. 
\end{align*}
\end{spreadlines}
The maps labelled $\cong$ are bijections.  The map in the second line is induced by the inclusion of categories $C_r \mathcal{F} \hookrightarrow \mathcal{F}$. The map in the fifth line is induced by the isomorphism of $O(2)/C_r$-categories $C_r \mathcal{F} \to \overline{\mathcal{F}}$ given by $C_r\cdot F \mapsto C_r \cdot F /C_r$. The map in the sixth line is induced by the functor $\mathcal{F} \to \overline{\mathcal{F}}$ given by $ F \mapsto \rho_r(F)$ and finally the last map is induced by the obvious isomorphism of dihedral spaces. The composition is the cyclotomic structure map $\tilde{T_r}$, which is indeed $O(2)$-equivariant. 

\begin{theorem}
The composite
\[ \rho_{r}^*(\THR(A,D)^c)^{gC_r} \to \rho_{r}^*\THR(A,D)^{gC_r} \xrightarrow{T_r} \THR(A,D)\]
is an $\mathscr{F}$-equivalence.
\end{theorem}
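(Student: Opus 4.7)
The goal is to show the composite induces isomorphisms on $\pi_*^H(-)$ for every $H\in\mathscr{F}$. Since every finite subgroup of $O(2)$ is conjugate to some $C_s$ or some $D_s$, I need only treat these two families. Because $\THR(A,D)$ is $\mathscr{F}$-fibrant by Proposition \ref{fibrant}, the $H$-homotopy groups on the target are computed directly from the pointset $H$-fixed points of the level spaces; on the source, the cofibrant replacement merely ensures that $(\THR(A,D)^c)^{gC_r}$ computes the derived geometric fixed points, so the whole composite is ultimately controlled by the pointset map $\tilde{T}_r$ at each level $V$. That map is built from the restriction map $\mathrm{res}$ on $\triangle(C_r)$-fixed mapping spaces together with the subdivision identifications of the dihedral realization provided by $\rho_r$.

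For $H=C_s$ there is nothing new: restricting along $\mathbb{T}<O(2)$ identifies $T_r$ with the classical cyclotomic structure map on the Hesselholt--Madsen $\mathbb{T}$-spectrum $\THH(A)$, and the conclusion on $\pi_*^{C_s}(-)$ is exactly \cite[Prop.~1.5]{HM97}. The only new content is therefore the dihedral case.

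For $H=D_s$ my plan is to mirror the argument of Proposition \ref{fibrant}. Applying Lemma \ref{subcat} to both sides at level $V$, the $D_s$-fixed points are rewritten as homotopy colimits over categories of the form $(I^{G})^{\{1,t_\bullet\}}\times I^{D_\bullet\cdot F/D_\bullet^{-}}$ of $D_{rs}$- respectively $D_s$-fixed mapping spaces, matched through the root isomorphism $\rho_r$ applied to the circle. Under these identifications $\tilde{T}_r$ becomes a natural transformation of diagrams whose value at each object is an instance of the restriction map $\Map(X,Y)^{C_r}\to \Map(X^{C_r},Y^{C_r})$, where $X=\bigwedge_{z} S^{i_z}$ and $Y=\bigwedge_{z}A_{i_z}\wedge S^V$. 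The Equivariant Approximation Lemma (Proposition \ref{EAL}\eqref{part 2},\eqref{part 3}) combined with Lemma \ref{infinity} reduces the question to showing that this restriction map becomes arbitrarily highly connected as the indexing object grows coordinate-wise; by the Equivariant Suspension Theorem together with Assumptions \ref{concond}, this follows from the same orbit-by-orbit bookkeeping already used in the proof of Proposition \ref{EAL}. The main obstacle will be precisely this bookkeeping: the domain naturally lives over subdivided subsets $D_{rs}\cdot F\subset \mathbb{T}$ while the target lives over $D_s\cdot \rho_r(F)$, so $\rho_r$ must be used throughout to pair indexing categories and fixed-point mapping spaces compatibly, and one must invoke the full strength of the $\lceil m/2\rceil$ clause in Assumptions \ref{concond} -- the $(D_n\circ\omega_n)$-fixed connectivity of the structure map -- in order to dominate the orbit sum at the reflection-fixed points $1$ and $t_{2s}$ and push the estimate through uniformly in $s$.
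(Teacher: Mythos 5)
Your high-level reduction to the pointset map $\tilde{T}_r$ and the dispatch of the cyclic case via \cite{HM97} are both fine, but the dihedral case has a genuine gap. The claim that the restriction map becomes arbitrarily highly connected as the indexing object $i$ grows coordinate-wise is false, and the Equivariant Suspension Theorem is the wrong tool to estimate it. The map $\Map(X,Y)^{C_r}\to\Map(X^{C_r},Y^{C_r})$ is not a suspension unit; it is a fibration, with fiber $\Map_{D_{rs}}\bigl(S^{V^{C_r}}\wedge (S^W/S^{W^{C_r}}),\ \bigwedge_z A_{i_{\overline{z}}}\wedge S^V\bigr)$ where $S^W=\bigwedge_z S^{i_{\overline z}}$, and its connectivity must be estimated via Lemma~\ref{conn of mapping space}.

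When you run that estimate orbit-by-orbit, for any subgroup $K\le D_{rs}$ with $C_r\nleq K$ the $i$-dependent terms in $\conn\bigl((\bigwedge_z A_{i_{\overline{z}}}\wedge S^V)^K\bigr)$ and in $\dim\bigl((S^{V^{C_r}}\wedge S^W/S^{W^{C_r}})^K\bigr)$ cancel identically, leaving $\dim(V^K)-\dim(V^{C_rK})-1$, which is independent of $i$. So letting $i$ grow handles only those $K$ containing $C_r$; it does nothing for the others, and Lemma~\ref{infinity} together with the approximation machinery cannot finish the proof. The missing ingredient is growth in the representation $V$: for each $K$ with $C_r\nleq K$ one chooses a $D_{rs}$-representation $V'$ that is $K$-fixed but not $C_rK$-fixed, and padding $V$ with copies of such $V'$ drives $\dim(V^K)-\dim(V^{C_rK})$ to infinity. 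This step is the heart of the argument and is entirely absent from your sketch; the ``orbit-by-orbit bookkeeping from the proof of Proposition~\ref{EAL}'' you invoke addresses a different phenomenon (connectivity growing in $i$) and does not supply it.
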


\begin{proof}
By \cite[Lemma 4.10]{MM} it suffices to show that the map of $\mathscr{J}_{O(2);C_r}^{U}$-spaces
\[ \tilde{T}_r: \Fix^{C_r} \THR(A, D) \to (\rho_{r}^{-1})^* (\THR(A,D) \circ \phi),\]
induces an isomorphism on $\pi^H_q(-)$ for all finite subgroups $H \leq O(2)/C_r$, where these are the homotopy groups of $\mathscr{J}_{O(2);C_r}^{U}$-spaces, see \cite[Def. 4.8]{MM}. We consider the case $q=0$, the general case is similar. More specifically, we must show that the connectivity of the induced $O(2)/C_r$-map on $H$-fixed points
\[ \left(\Omega ^{V^{C_r}} \THR(A, D)(V)^{C_r}\right)^H \to \left(\Omega ^{V^{C_r}} (\rho_r^{-1})^*\THR(A,D)(\rho_r^*V^{C_r})\right)^H,\] 
tends to infinity with $V$ for all finite subgroups $H \leq O(2)/C_r$.

The only non-homeomorphism in the definition of $\tilde{T}_r$,  is the restriction map induced by the natural transformation induced by restricting a map to the fixed point space: $\Map(X,Y)^{C_r} \to \Map(X^{C_r}, Y^{C_r})$. We let $H=D_{rs}/C_r$, the case $H=C_{rs}/C_r$ is analogous. We assume that $2\nmid rs$. The case $2\mid rs$, can be done analogously by restricting to the cofinal subcategory $D_{2sr}\mathcal{F}_*$, compare Lemma \ref{EAL} part \eqref{part 3}. We restrict to the cofinal subcategory $D_{rs}\mathcal{F}_* \subset C_r \mathcal{F}$. The map in question is then 
 \[
 \begin{tikzpicture}
\node(a){$   \Big(\Omega ^{V^{C_r}} \colim_{D_{sr} \mathcal{F}_*}\left(\THR(A,D; S^{V} )[D_{sr}F]  \right)^{C_r}  \Big)^H $};
\node(e)[below of=a, node distance = 1.5cm]{$\Big(\Omega ^{V^{C_r}} \colim_{D_{sr} \mathcal{F}_*} D^* \THR(A,D; S^{V^{C_r}} )[D_{sr}F/C_r]  \Big)^H$};

\draw[->](a) to node [right]{$\res$} (e);

\end{tikzpicture}
\]
where $H=\triangle(D_{rs})/\triangle(C_r)$, $\Omega ^{V^{C_r}}$ is viewed as a $\triangle(O(2))/\triangle(C_r)$-space, $D_{rs}\mathcal{F}_*$ as a category with a $\triangle(O(2))/\triangle(C_r)$-action and $D$ is the homomorphism 
\[
\triangle(O(2))/\triangle(C_r) \to O(2)/C_r \times O(2)/C_r, \quad D(a,a)\triangle(C_r)=(aC_r,aC_r).
\] 
It follows from \cite[Lemma 1.4]{HM97} that we can move $\Omega^{V^{C_r}}$ past the colimit up to weak equivalence. We can then take fixed points before taking the colimit. Thus we are reduced to showing that the connectivity of the map 
\begin{align*}
   \Big(\Omega ^{V^{C_r}} \left(\THR(A,D; S^{V} )[D_{sr}F]  \right)^{C_r}  \Big)^H   \xrightarrow{\res} \Big(\Omega ^{V^{C_r}} D^* \THR(A,D; S^{V^{C_r}} )[D_{sr}F/C_r]  \Big)^H
\end{align*}
tends to infinity with $V$. We can move $\Omega ^{V^{C_r}}$ past the homotopy colimit, up to weak equivalence, and by Lemma \ref{subcat} we can take fixed points before taking homotopy colimits. Let $i \in \ob\big((I^{G})^{\{1,t_{2rs}\}}  \times I^{D_{sr}F/D_{sr}^-}\big)$. By Lemma \ref{infinity}, we are reduced to showing that the connectivity of the map
 \[
 \begin{tikzpicture}
\node(a){$\left( \Omega ^{V^{C_r}} \Map( \bigwedge_{z\in D_{sr}F}S^{i_{\overline{z}}}, \bigwedge_{z\in D_{sr}F}A_{i_{\overline{z}}} \wedge S^V )^{C_r}\right)^H $};
\node(e)[below of=a, node distance = 1.5cm]{$ \left(\Omega ^{V^{C_r}} \Map(( \bigwedge_{D_{sr}F} S^{i_{\overline{z}}})^{C_r} , (\bigwedge_{D_{sr}F} A_{i_{\overline{z}}} \wedge S^{V})^{C_r}) \right)^H. $};

\draw[->](a) to node [right]{$\res$} (e);

\end{tikzpicture}
\]
tends to infinity with $V$ and $i$. We can rewrite the map in question as
 \[
 \begin{tikzpicture}
\node(a){$ \Map_{D_{rs}}\Big( S^{V^{C_r}}\wedge \bigwedge_{z\in D_{sr}F}S^{i_{\overline{z}}} , \bigwedge_{z\in D_{sr}F}A_{i_{\overline{z}}}\wedge S^V \Big)  $};
\node(e)[below of=a, node distance = 1.5cm]{$ \Map_{D_{rs}}\Big(  S^{V^{C_r}} \wedge (\bigwedge_{D_{sr}F} S^{i_{\overline{z}}})^{C_r}   , \bigwedge_{z \in D_{sr}F} A_{i_{\overline{z}}} \wedge S^V \Big).$};

\draw[->](a) to node [right]{$\res$} (e);

\end{tikzpicture}
\]
This is a fibration with fiber 
\[\Map_{D_{rs}}\Big(S^{V^{C_r}}  \wedge (  \bigwedge_{z\in D_{sr}F}S^{i_{\overline{z}}}  / (\bigwedge_{z\in D_{sr}F}S^{i_{\overline{z}}})^{C_r} ), \bigwedge_{z \in D_{sr}F} A_{i_{\overline{z}}} \wedge S^V \Big).\] 
Let $S^W= \bigwedge_{z\in D_{sr}F}S^{i_{\overline{z}}}$. By Lemma \ref{conn of mapping space} the connectivity of the fiber is greater than or equal to 
\[\underset{K\leq D_{rs}}{\min} \Big(\conn \Big( (\bigwedge_{z \in D_{sr}F} A_{i_{\overline{z}}} \wedge S^V )^K\Big)-\dim \Big( (S^{V^{C_r}}  \wedge S^{W}/S^{W^{C_r}})^K \Big)\Big). \]
 Let $t$ divide $rs$. It follows from the connectivity assumptions \ref{concond} on $(A,D)$ that
\begin{align*}
\conn(\bigwedge_{z \in D_{sr}F} A_{i_z} \wedge S^V )^{C_t} &\geq \frac{rs}{t} \Big(i_1+i_{t_{2rs}} + \sum_{\overline{z}\in D_{sr} F/D_{sr}^-} 2i_{\overline{z}}\Big) + \dim(V^{C_t})-1,  \\
\conn(\bigwedge_{z \in D_{sr}F} A_{i_z} \wedge S^V )^{D_{t}}&\geq \frac{rs}{t} \Big(\Big\lceil\frac{i_1}{2}\Big\rceil +\Big\lceil\frac{i_{t_{2rs}}}{2}\Big\rceil + \sum_{\overline{z}\in D_{sr} F/D_{sr}^-} i_{\overline{z}}\Big) + \dim(V^{D_{t}}) -1.
\end{align*} 
If $C_r \leq C_t$ then the dimension of both $(S^{W}/S^{W^{C_r}})^{C_t}$ and $(S^{W}/S^{W^{C_r}})^{D_t}$ is 0, hence the connectivity of the fiber tends to infinity with $i$.  Otherwise, since $C_r$ is normal in $D_{rs}$, we have an splitting $W=W^{C_r} \oplus W'$, and by Lemma \ref{dimension}:
\begin{align*}
\dim\left( S^{V^{C_r}}  \wedge (  S^W/S^{W^{C_r}}) \right)^{C_t} &= 
 \frac{rs}{t} \Big(i_1+i_{t_{2rs}} + \sum_{\overline{z}\in D_{sr} F/D_{sr}^-} 2i_{\overline{z}}\Big) + \dim(V^{C_rC_t}) \\
\dim\left( S^{V^{C_r}}  \wedge (  S^W/S^{W^{C_r}})  \right)^{D_{t}} &= 
\frac{rs}{t} \Big(\Big\lceil\frac{i_1}{2}\Big\rceil +\Big\lceil\frac{i_{t_{2rs}}}{2}\Big\rceil + \sum_{\overline{z}\in D_{sr} F/D_{sr}^-} i_{\overline{z}}\Big) +   \dim(V^{C_rD_t}).
\end{align*}
Thus the connectivity of the fiber is greater than 
\[
{\min}_{C_r \nleq K\leq D_{rs}}(\dim(V^K) -\dim( V^{C_r K}))-1.
\] 
We can find a $D_{rs}$-representation $V'$ which is fixed by $K$ but not by the bigger group $C_rK$. Adding copies of $V'$ to $V$, we can make the connectivity as big as we want.
\end{proof}

\section{Real topological cyclic homology}
The $O(2)$-cyclotomic structure on $\THR(A,D)$ allows us to define $G$-equivariant restriction maps $R: \THR(A, D)^{C_{p^n}} \to \THR(A, D)^{C_{p^{n-1}}}$, and we can define the real topological cyclic homology at a prime $p$ as a $G$-spectrum $\TCR(A, D;p)$ by mimicking the classical definition. Before we do so, we review some constructions from equivariant stable homotopy theory. 

\subsection{The equivariant stable homotopy category} \label{hcat}
Let $H$ be a compact Lie group. We work in the $H$-stable homotopy category which is defined to be the homotopy category of the model category of orthogonal $H$-spectra on a complete universe with the stable model structure, e.g. \cite[Chapter III, 4.1,4.2]{MM}. 

Let $X$ be an orthogonal $H$-spectrum and let $V$ be an $H$-representation. The suspension $\Sigma X$ is defined by $(\Sigma X)(V)=S^1 \wedge X(V)$. The group $O(V) \rtimes H$ acts through the action on $X(V)$, and the structure maps are the suspensions of the structure maps in $X$. The loop spectrum $\Omega X$ is defined by $(\Omega X)(V)=\Map(S^1, X(V))$. The group $O(V) \rtimes H$ acts through the action on $X(V)$. The structure maps are given as the composite
\[
\Map(S^1, X(V))\wedge S^W \to \Map(S^1, X(V)\wedge S^W) \xrightarrow{\Map(\id, \lambda_{V,W})} \Map(S^1, X(V)).
\]
The functors are adjoint and both preserve $\pi_*$-isomorphisms. We let $\varepsilon: \Sigma \Omega\Rightarrow \id$ denote the counit of the adjunction and $\eta: \id \Rightarrow \Omega \Sigma$ denote the unit of the adjunction. Both $\varepsilon$ and $\eta$ are natural isomorphisms on the homotopy category.

Let $\psi: A \xrightarrow{} B$ be a map of pointed $H$-spaces. We define the mapping cone by
\[C_\psi= B \cup_{\psi} ([0,1] \wedge A) ,\] 
where $1\in [0,1]$ is the basepoint for the interval and $H$ acts trivially on the interval.  Let $i: B \to C_\psi$ be the inclusion. Collapsing the image of the inclusion to the basepoint defines a map $\delta: C_\psi \to S^1 \wedge A= \Sigma A$. We define the mapping cone $C_f$ of a map of orthogonal $H$-spectra $f:X \xrightarrow{} Y$ by applying this construction lewelwise. The inclusions and the collapse maps assemble into morphisms of orthogonal $H$-spectra and we obtain a sequence of orthogonal $H$-spectra
\begin{align}\label{triangles}
 X \xrightarrow{f} Y \xrightarrow{i} C_f \xrightarrow{\delta} \Sigma X.
 \end{align}

The collection of all triangles isomorphic to triangles of the form \eqref{triangles} gives the \mbox{$H$-stable} homotopy category the structure of a triangulated category, see \cite[Theorem A.12]{S13}. More precisely, Schwede takes the distinguished triangles to be all triangles isomorphic to triangles of the form 
\[
V \xrightarrow{j} W \to W/V \xrightarrow{\partial} \Sigma W,
\] 
where $j$ is a cofibration and $V$ and $W$ are cofibrant objects and $\partial$ fits in the homotopy commutative diagram 
 \[\begin{tikzpicture}
 \node(v){$V$}; \node(x)[right of = v,node distance = 2cm]{$W$};
\node(y)[right of = x,node distance = 2cm]{$W/V$};
\node(z)[right of = y,node distance = 2cm]{$\Sigma V$};
\node(a)[below of = v,node distance = 1.5cm]{$V$};
\node(b)[below of = x,node distance = 1.5cm]{$W$};
\node(c)[below of = y,node distance = 1.5cm]{$C_j$};
\node(d)[right of = c,node distance = 2cm]{$\Sigma V$};

\draw[->](v) to node [above]{$j$} (x);
\draw[->](x) to node [above]{} (y);
\draw[->](y) to node [above]{$\partial$} (z);
\draw[->](c) to node [above]{$\delta$} (d);
\draw[->](c) to node [right]{$c$} (y);
\draw[->](c) to node [left]{$\sim$} (y);
\draw[->](d) to node [right]{$\id$} (z);
\draw[->](a) to node [right]{$\id$} (v);
\draw[->](b) to node [right]{$\id$} (x);
\draw[->](a) to node [above]{$j$} (b);
\draw[->](b) to node [above]{$i$} (c);

\end{tikzpicture}\]
where $c$ collapses the cone of $V$ to the base point. In order to see that this choice makes the triangles of the form \eqref{triangles} distinguished, we first note that the map $c$ is a weak equivalence by the gluing lemma, and therefore the lower row is a distinguished triangle. Let $f: X \to Y$ be an arbitrary map. We cofibrantly replace $X$ and $Y$ and get a map $X^c \xrightarrow{f^c} Y^c$, which we factor as a cofibration $\tilde{f^c}$ followed by a weak equivalence in the following diagram
 \[\begin{tikzpicture}
 \node(v){$X$}; \node(x)[right of = v,node distance = 2cm]{$Y$};
\node(y)[right of = x,node distance = 2cm]{$C_f$};
\node(z)[right of = y,node distance = 2cm]{$\Sigma X$};
\node(a)[below of = v,node distance = 1.5cm]{$X^c$};
\node(b)[right of = a,node distance = 2cm]{$Y^c$};
\node(c)[right of = b,node distance = 2cm]{$C_{f^c}$};
\node(d)[right of = c,node distance = 2cm]{$\Sigma X^c$};
\node(e)[below of = a,node distance = 1.5cm]{$X^c$};
\node(f)[right of = e,node distance = 2cm]{$Y'$};
\node(g)[right of = f,node distance = 2cm]{$C_{\tilde{f^c}}$};
\node(h)[right of = g,node distance = 2cm]{$\Sigma X^c$};

\draw[->](v) to node [above]{$f$} (x);
\draw[->](x) to node [above]{$i$} (y);
\draw[->](y) to node [above]{$\delta$} (z);

\draw[->](a) to node [above]{$f^c$} (b);
\draw[->](b) to node [above]{$i$} (c);
\draw[->](c) to node [above]{$\delta$} (d);

\draw[->](e) to node [above]{$\tilde{f^c}$} (f);
\draw[->](f) to node [above]{$i$} (g);
\draw[->](g) to node [above]{$\delta$} (h);

\draw[->](a) to node [left]{$\sim$} (v);
\draw[->](b) to node [left]{$\sim$} (x);
\draw[->](a) to node [right]{$j^c$} (v);
\draw[->](b) to node [right]{$j^c$} (x);

\draw[->](c) to node [left]{$\sim$} (y);
\draw[->](d) to node [left]{$\sim$} (z);
\draw[->](d) to node [right]{$\Sigma(j^c)$} (z);

\draw[->](e) to node [right]{$\id$} (a);
\draw[->](f) to node [left]{$\sim$} (b);
\draw[->](g) to node [left]{$\sim$} (c);
\draw[->](h) to node [right]{$\id$} (d);

\end{tikzpicture}\]
 It follows from the long exact sequence induced by the cofiber sequence that the induced map on cones are $\pi_*$-isomorphisms. We note that $Y'$ is cofibrant, 
 %since the unique map $* \to Y'$ factors as the composition of two cofibrations $* \to X^c \xrightarrow{f^c} Y'$, 
 hence the bottom triangle is distinguished, and therefore the top triangle is distinguished.

In the rest of this section we establish some identification of distinguished triangles that we will need later. 

Let $\psi: A \xrightarrow{} B$ be a map of pointed $H$-spaces. We define the homotopy fiber of $\psi$ as the pointed $H$-space 
\[H_\psi= \{(\gamma, a)\in B^{[0,1]} \times A \mid \gamma(0)=\psi(x), \ \gamma(1)=*\},\]
where $(\gamma_{*},*)$ serves as the basepoint and $H$ acts trivially on the interval. Here $\gamma_*$ refers to the constant path at the basepoint.
Let $p: H_\psi \to A$ be the projection $p(\gamma, a)=a$ and let $j: \Omega B \to H_\psi$ be the inclusion $j(\alpha)= (\alpha, *)$. We define the homotopy fiber of a map of orthogonal \mbox{$H$-spectra} $f:X \xrightarrow{} Y$ by applying this construction lewelwise. The projections and inclusions assemble into morphisms of orthogonal \mbox{$H$-spectra} $p: H_f \to X$ and $j: \Omega Y \to H_f$. See \cite[Theorem 7.1.11]{Ho} for the following lemma. 
\begin{lemma}\label{fiber}
The triangle
\[H_f \xrightarrow{p} X \xrightarrow{f} Y \xrightarrow{\Sigma(j) \circ \varepsilon^{-1}}  \Sigma H_f,\]
is distinguished, where $\varepsilon: \Sigma \Omega Y \to Y$ is the counit of the loop-suspension adjunction.
\end{lemma}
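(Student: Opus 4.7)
The plan is to exhibit the given triangle as isomorphic, in the $H$-stable homotopy category, to the distinguished cofiber triangle
\[
H_f \xrightarrow{p} X \xrightarrow{\operatorname{incl}} C_p \xrightarrow{\delta} \Sigma H_f
\]
of $p \colon H_f \to X$, which is distinguished by the description of the triangulated structure recalled in Section \ref{hcat}. Concretely, I will construct a $\pi_*$-isomorphism $\pi \colon C_p \to Y$ fitting into a map of triangles from the cofiber triangle to the one in the statement, with identities on $H_f$, $X$, and $\Sigma H_f$; the triangulated axioms then promote the latter to a distinguished triangle.

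First I define the comparison map $\pi \colon C_p \to Y$ levelwise by $\pi|_X = f$ and $\pi(t, (\gamma, x)) = \gamma(t)$ on the cone part $[0,1] \wedge H_f$. This is well defined because $\gamma(0) = f(x)$ and $\gamma(1) = *$, is equivariant, and is natural in the spectrum coordinate, so it assembles into a morphism of orthogonal $H$-spectra satisfying $\pi \circ \operatorname{incl} = f$. That $\pi$ is a $\pi_*$-isomorphism reduces, for each $H$-representation $V$ and each closed subgroup $K \leq H$, to the classical fact that the canonical map from the mapping cone of the homotopy fibre inclusion to the base of a map of pointed spaces is a weak equivalence, provable by comparing the long exact sequences of the homotopy fibration and the cofibration via the five-lemma.

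Next I identify the third map of the resulting triangle. Consider the natural inclusion $\operatorname{can} \colon \Sigma \Omega Y \to C_p$ coming from the cone on $j \colon \Omega Y \to H_f$, sending $(t, \alpha) \mapsto (t, (\alpha, *))$. A direct inspection on points shows
\[
\pi \circ \operatorname{can} = \varepsilon \qquad \text{and} \qquad \delta \circ \operatorname{can} = \Sigma(j).
\]
Since $\varepsilon$ is a natural isomorphism in the $H$-stable homotopy category and $\pi$ is a $\pi_*$-isomorphism, two-out-of-three applied to the first equation shows that $\operatorname{can}$ is also an isomorphism there, with $\pi^{-1} = \operatorname{can} \circ \varepsilon^{-1}$. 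Substituting,
\[
\delta \circ \pi^{-1} = \delta \circ \operatorname{can} \circ \varepsilon^{-1} = \Sigma(j) \circ \varepsilon^{-1},
\]
which is exactly the commutation required of the right-hand square in the intended map of triangles. All three vertical maps are then isomorphisms, and since the top triangle is distinguished, so is the bottom one.

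The main obstacle is constructing the single comparison map $\operatorname{can}$ realizing $\varepsilon$ through $\pi$ and $\Sigma(j)$ through $\delta$ simultaneously at the point-set level; once this is in place, the proof reduces to the triangulated-category formalism together with the standard five-lemma verification that $\pi$ is a weak equivalence.
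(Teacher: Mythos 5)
Your overall strategy --- realize the claimed triangle as isomorphic to the cofiber triangle of $p$ --- is sound, and it is essentially the content of the reference \cite[Theorem 7.1.11]{Ho} that the paper itself cites in lieu of a proof. The constructions of $\pi\colon C_p \to Y$ and $\operatorname{can}\colon \Sigma\Omega Y \to C_p$, the point-set checks $\pi\circ\operatorname{can}=\varepsilon$ and $\delta\circ\operatorname{can}=\Sigma(j)$, and the two-out-of-three identification of the connecting map are all correct and are the right way to finish.

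There is, however, a genuine error in your justification that $\pi$ is a $\pi_*$-isomorphism. You reduce it, ``for each $H$-representation $V$ and each closed subgroup $K\leq H$,'' to the alleged classical fact that for a map of pointed \emph{spaces} the mapping cone of the homotopy fiber inclusion is weakly equivalent to the base. This is false: for $f\colon * \to S^1$ the homotopy fiber is $\Omega S^1\simeq\mathbb{Z}$, and the mapping cone of $\Omega S^1\to *$ is $\Sigma\Omega S^1\simeq\bigvee_{n\neq 0}S^1$, which is not weakly equivalent to $S^1$. Relatedly, the five-lemma comparison you describe has no unstable content, since a cofibration of pointed spaces does not induce a long exact sequence in homotopy groups. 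The statement you need is genuinely stable and should be proved via the long exact sequences in the equivariant stable homotopy groups $\pi_q^K(-)$ of the \emph{spectra}: the levelwise fiber sequence $\Omega Y\to H_f\to X$ yields, after passing to the colimit over $V$, a long exact sequence involving $\pi_q^K(Y)$, while the cofiber sequence $H_f\to X\to C_p\to\Sigma H_f$ of orthogonal spectra yields one involving $\pi_q^K(C_p)$; after checking compatibility of the boundary maps, the five-lemma applied to these two sequences of spectrum homotopy groups shows that $\pi$ is a $\pi_*$-isomorphism. That comparison cannot be carried out ``for each $V$'' as a statement about spaces.
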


Given maps of pointed $H$-spaces $f,g: A \to B$ we define the homotopy equalizer as the pointed $H$-space
\[\HE(f,g)=\{(\gamma, a)\in B^{[0,1]} \times A \mid \gamma(0)=f(a), \ \gamma(1)=g(a)\}, \]
where $(\gamma_{*},*)$ is the basepoint and $H$ acts trivially on the interval. Let $p: \HE(f,g) \to A$ denote the projection $p(\gamma,a)=a$ and $\iota: \Omega B \to \HE(f,g)$ the inclusion $\iota(\alpha)=(\alpha, *)$.  We define the homotopy equalizer of maps of orthogonal $H$-spectra $f,g:X \xrightarrow{} Y$ by applying this construction lewelwise, and the projections and inclusions assemble into morphisms of orthogonal $H$-spectra. 

\begin{lemma}\label{lemma}
The triangle
\[\HE(f,g)  \xrightarrow{p} X \xrightarrow{f-g} Y \xrightarrow{\Sigma(\iota) \circ \varepsilon^{-1}} \Sigma \HE(f,g), \]
is distinguished, where $\varepsilon: \Sigma \Omega Y \to Y$ is the counit of the loop-suspension adjunction.
\end{lemma}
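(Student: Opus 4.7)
The plan is to reduce the statement to Lemma \ref{fiber} by identifying the homotopy equalizer $\HE(f,g)$ with the homotopy fiber $H_{f-g}$ in the $H$-stable homotopy category. Since the homotopy category is triangulated, the set $[X,Y]^H$ is an abelian group, so the difference $f-g$ is well-defined there; the task is to realize this identification compatibly with the projection to $X$ and the inclusion from $\Omega Y$.

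To construct the comparison, first I would present $\HE(f,g)$ as a homotopy pullback, namely as $X \times^h_{Y \times Y} Y$ where $Y \to Y \times Y$ is the diagonal $\Delta$ and $X \to Y \times Y$ is $(f,g)$. In the stable category, the shearing automorphism of $Y \vee Y$ given informally by $(a,b) \mapsto (a, b-a)$ converts $\Delta$ into the first summand inclusion and $(f,g)$ into $(f, g-f)$. Hence the homotopy pullback is the homotopy fiber of $g-f$, which is isomorphic to $H_{f-g}$ via the equivalence $(-1) \colon Y \to Y$.

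Next I would check compatibility of the canonical maps. The projection $p \colon \HE(f,g) \to X$ manifestly corresponds to the projection $p \colon H_{f-g} \to X$ under the shearing identification, because both are recovered as the pullback projection to the $X$-factor. Similarly, the inclusion $\iota \colon \Omega Y \to \HE(f,g)$ sending a loop $\alpha$ to $(\alpha, *)$ corresponds to the inclusion $j \colon \Omega Y \to H_{f-g}$, since on the shearing side both realize a loop as an element of the fiber over the basepoint. With $p$ and $\iota$ matched, the triangle of the lemma becomes isomorphic in the $H$-stable homotopy category to the distinguished triangle of Lemma \ref{fiber}, and hence is itself distinguished.

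The main obstacle is that there is no point-set model of the stable difference $f-g$, so the shearing argument cannot be carried out naively on orthogonal spectra. The cleanest way around this is to stay inside the triangulated $H$-stable homotopy category throughout: construct the comparison $\HE(f,g) \to H_{f-g}$ as a morphism of objects there, verify that it extends to a morphism between the candidate triangle and the distinguished triangle of Lemma \ref{fiber} using naturality of $p$ and $\iota$, and conclude by the five lemma that it is an isomorphism of triangles. Since the class of distinguished triangles is closed under isomorphism, this yields the claim.
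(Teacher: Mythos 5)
Your overall strategy — reducing to Lemma \ref{fiber} by identifying the homotopy equalizer $\HE(f,g)$ with the homotopy fiber $H_{f-g}$ — is exactly the approach the paper takes, and the shearing intuition you describe is the right conceptual picture. However, your proposed way of closing the gap you correctly identify (``stay inside the triangulated $H$-stable homotopy category throughout'') runs into a circularity that you have not resolved. To produce a comparison map $\phi\colon \HE(f,g)\to H_{f-g}$ satisfying both $p'\circ\phi=p$ and $\phi\circ\iota=j$ by abstract triangulated-category arguments, the natural tool would be the completion axiom (TR3), but that only applies once you already know the source triangle is distinguished, which is what you are trying to prove. The five-lemma step also presupposes that the row containing $\HE(f,g)$ induces a long exact sequence with connecting map $f_*-g_*$; that is true because $\HE(f,g)\to X$ is a point-set fibration, but identifying its connecting homomorphism with $f_*-g_*$ and then matching it against $\Sigma(\iota)\circ\varepsilon^{-1}$ is precisely the content that needs an explicit argument, not a formal one.

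The paper resolves this by realizing the stable difference at the point-set level: it replaces $X$ and $Y$ by $\Omega\Sigma X$ and $\Omega\Sigma Y$, where loop concatenation gives a point-set model $F-G\colon\Omega\Sigma X\to\Omega\Sigma Y$, $\alpha\mapsto\overline{(\Sigma g\circ\alpha)}\star(\Sigma f\circ\alpha)$. It then writes down an explicit $\pi_*$-isomorphism $\HE(f,g)\to H_{F-G}$, $(\gamma,x)\mapsto(\Psi_{\gamma,x},\eta(x))$, built from the path $\eta\circ\gamma$ whiskered by $\overline{\eta(g(x))}\star(-)$ and concatenated with the canonical nullhomotopy of $\overline{\eta(g(x))}\star\eta(g(x))$, and checks that this map fits into a commuting ladder from the $\iota$-$p$-$(f-g)$ row to the $j$-$P$-$(F-G)$ row (with the verticals being the $\pi_*$-isomorphisms $\eta$). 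Since the bottom row is distinguished by Lemma \ref{fiber} and the ladder is an isomorphism of triangles, the top row is distinguished. In short: your plan is the right one, but the construction of $\phi$ and the check $\phi\circ\iota=j$ cannot be delegated to pure triangulated nonsense; the loop-concatenation model of the difference is exactly the ingredient that makes the shearing argument executable on orthogonal spectra, and you should supply it.
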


Before we prove the lemma, we introduce some notation. For $\alpha \in \Omega Y$, let $\overline{\alpha} \in \Omega Y$ denote the inverse loop, i.e. $\overline{\alpha}(t)=\alpha(1-t)$. Given two loops $\alpha, \beta \in \Omega Y$ the concatenated loop $\beta \star \alpha \in \Omega Y$ is given by
\[
\beta \star \alpha(t)= \left\{ \begin{array}{rl}
 \alpha(2t) &\mbox{ if $t \leq 1/2$,} \\
  \beta(2(t-\frac{1}{2})) &\mbox{ if $t>1/2$.}
       \end{array} \right. 
\] 
Given a path $\gamma$ in $Y$ and $s\in [0,1]$, let $\gamma_{\leq s}$ and $\gamma_{\geq s}$ denote the paths in $Y$ given by
\[
\gamma_{\leq s}(t)=\gamma(t\cdot s), \quad \gamma_{\geq s}(t)=\gamma(t\cdot (1-s) + s), \quad t\in [0,1].
\]
We have a canonical path in $\Omega Y$ from the loop $\overline{\gamma} \star \gamma$ to the constant loop $*$ given by 
\[ 
t \mapsto \overline{\gamma_{\geq t}} \star \gamma_{\leq (1-t)}.
\]
\begin{proof}
We define the map $F-G: \Omega \Sigma X \to  \Omega \Sigma Y$ to be the composition
		\[
		\Omega \Sigma X \xrightarrow{\triangle} \Omega \Sigma X \times \Omega \Sigma X \xrightarrow{\overline{(\Sigma g \circ-)}\star(\Sigma f \circ -)} \Omega \Sigma Y ,
		\]
		where $\triangle$ is the diagonal map and consider the commutative diagram:
 \[
 \begin{tikzpicture}
\node(a){$\Omega Y$}; \node(b)[right of = a,node distance = 3cm]{$\HE(f,g)$};
\node(c)[right of = b,node distance = 3cm]{$X$};
\node(d)[right of = c,node distance = 3cm]{$Y$};
\node(e)[below of = a,node distance = 1.5cm]{$\Omega ( \Omega \Sigma Y)$};
\node(f)[right of = e,node distance = 3cm]{$H_{F-G}$};
\node(g)[right of = f,node distance = 3cm]{$\Omega \Sigma X$};
\node(h)[right of = g,node distance = 3cm]{$\Omega \Sigma Y.$};

\draw[->](a) to node [above]{$\iota$} (b);
\draw[->](b) to node [above]{$p$} (c);
\draw[->](c) to node [above]{$f-g$} (d);

\draw[->](e) to node [above]{$j$} (f);
\draw[->](f) to node [above]{$P$} (g);
\draw[->](g) to node [above]{$F-G$} (h);

\draw[->](a) to node [left]{$\sim$} (e);
\draw[->](a) to node [right]{$\Omega(\eta)$} (e);
\draw[->](c) to node [right]{$\eta$} (g);
\draw[->](d) to node [right]{$\eta$} (h);
\draw[->](c) to node [left]{$\sim$} (g);
\draw[->](d) to node [left]{$\sim$} (h);

\end{tikzpicture}
\]
The composition  
\[ 
[0,1] \xrightarrow{\gamma} Y \xrightarrow{\eta}  \Omega \Sigma Y \xrightarrow{\overline {\eta( g(x))} \star (-) } \Omega \Sigma Y
 \]
is a path from $  \overline {\eta( g(x))} \star \eta(f(x)) $ to  $ \overline {\eta( g(x))}\star \eta( g(x))$.  We let $\Psi_{\gamma,x}$ be the composition of this path with the canonical path from $\eta( g(x)) \star \overline {\eta( g(x))}$ to the basepoint. We define a map $\HE(f,g) \to H_{F-G}$ by $(\gamma, x) \mapsto ( \Psi_{\gamma,x}, \eta(x))$. The constructed map is a $\pi_*$-isomorphism and completes the commutative diagram above.
\end{proof}

Consider the following diagram of orthogonal $H$-spectra
\[
\begin{tikzpicture}
\node(a){$X$}; \node(b)[right of = a,node distance = 2cm]{$Z$};
\node(c)[right of = b,node distance = 2cm]{$Y$};

\draw[->](a) to node [above]{$f$} (b);
\draw[->](c) to node [above]{$g$} (b);

\end{tikzpicture}
\]
We define the homotopy pull-back spectrum $\HP(f,g)$ lewelwise:
\[\HP(f,g)(V)=\{(x,\gamma,y) \in X(V) \times Z(V)^{[0,1]} \times Y(V) \mid \gamma(0)=f(x), \gamma(1)=g(y)\},\] 
with $H$ acting trivially on the interval. The projections $p_X: \HP(f,g)(V) \to X(V)$, $p_Y: \HP(f,g)(V)\to Y(V)$ assemble into maps of orthogonal $H$-spectra.  The following diagram is homotopy commutative: 
 \[\begin{tikzpicture}
\node(a){$\Omega Y$}; \node(b)[right of = a,node distance = 2cm]{$H_{p_Y}$};
\node(c)[right of = b,node distance = 2cm]{$\HP(f,g)$};
\node(d)[right of = c,node distance = 2cm]{$Y$};
\node(e)[below of = a,node distance = 1.5cm]{$\Omega Z$};
\node(f)[right of = e,node distance = 2cm]{$H_f$};
\node(g)[right of = f,node distance = 2cm]{$X$};
\node(h)[right of = g,node distance = 2cm]{$Z.$};

\draw[->](a) to node [above]{} (b);
\draw[->](b) to node [above]{$p$} (c);
\draw[->](c) to node [above]{$p_Y$} (d);

\draw[->](e) to node [below]{$j$} (f);
\draw[->](f) to node [above]{} (g);
\draw[->](g) to node [below]{$f$} (h);

\draw[->](a) to node [left]{$\Omega g$} (e);
\draw[->](b) to node [left]{$s$} (f);
\draw[->](b) to node [right]{$\sim$} (f);
\draw[->](c) to node [left]{$p_X$} (g);
\draw[->](d) to node [left]{$g$} (h);

\end{tikzpicture}\]
The map $s$ is defined as follows. A point in $H_{{p_Y}}(V)$ is a triple $(x, \gamma,y)\in \HP(f,g)(V)$ along with a path $\beta: I \to Y$ such that $\beta(0)=y$ and $\beta(1)=*$. Then 
\[s(\beta,(x,\gamma,y))=((g\circ \beta)\star \gamma ,x).\]
The following lemma now follows from Lemma \ref{fiber} and the diagram above:
\begin{lemma} \label{triangle HE}
The triangle
\[H_f \xrightarrow{p \circ s^{-1}} \HP(f,g) \xrightarrow{p_Y} Y  \xrightarrow{\Sigma(j) \circ \varepsilon^{-1} \circ g} \Sigma H_f, \]
is distinguished.
 \end{lemma}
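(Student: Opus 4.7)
The plan is to deduce Lemma \ref{triangle HE} directly from Lemma \ref{fiber} applied to the projection $p_Y\colon \HP(f,g) \to Y$. This yields the distinguished triangle
\[
H_{p_Y} \xrightarrow{\ p\ } \HP(f,g) \xrightarrow{\ p_Y\ } Y \xrightarrow{\Sigma(j_{p_Y}) \circ \varepsilon^{-1}} \Sigma H_{p_Y},
\]
and the task is then to transport it along the comparison map $s\colon H_{p_Y} \to H_f$ displayed in the diagram preceding the lemma, so as to replace $H_{p_Y}$ by $H_f$.

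First I would verify that $s$ is a $\pi_*$-isomorphism. Levelwise, $p_Y\colon \HP(f,g)(V) \to Y(V)$ is a Hurewicz fibration (explicit path lifting is provided by composing the given path in $Y(V)$ with the constant data $(x,\gamma)$ in the middle slot). Its strict fiber over the basepoint is
\[
\{(x,\gamma,*) : \gamma(0) = f(x),\ \gamma(1) = *\} = H_f(V),
\]
so the canonical inclusion of the strict fiber into the homotopy fiber is a weak equivalence. A short path-reparametrization argument identifies $s$ with a homotopy inverse of this inclusion: the concatenation $(g\circ\beta)\star\gamma$ is homotopic, rel endpoints, to $\gamma$ whenever $\beta$ is the constant path, and $s$ is natural with respect to the $\beta$-coordinate. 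Hence $s$ is a levelwise weak equivalence of orthogonal $H$-spectra, and thus a $\pi_*$-isomorphism.

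Next I would check that the square
\[
\begin{tikzpicture}[x=2.5cm,y=1.4cm]
\node(a) at (0,1) {$\Omega Y$};
\node(b) at (1,1) {$H_{p_Y}$};
\node(c) at (0,0) {$\Omega Z$};
\node(d) at (1,0) {$H_f$};
\draw[->] (a) to node[above]{$j_{p_Y}$} (b);
\draw[->] (a) to node[left]{$\Omega g$} (c);
\draw[->] (b) to node[right]{$s$} (d);
\draw[->] (c) to node[below]{$j$} (d);
\end{tikzpicture}
\]
commutes up to homotopy. Both composites carry a loop $\alpha \in \Omega Y$ to a loop in $Z$ based at the basepoint (the image of $f(*) = g(*) = *$), obtained from $g\circ \alpha$ by reparametrization through the concatenation $\star$ with the constant path; the required homotopy is the standard one between $(g \circ \alpha)\star c_*$ and $g \circ \alpha$. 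The right square of the diagram above the lemma commutes strictly. These identifications allow us to rotate the distinguished triangle of Lemma \ref{fiber} for $p_Y$ along the isomorphism $s$ in the $H$-stable homotopy category: the projection becomes $p \circ s^{-1}$, and by naturality of the loop-suspension counit $\varepsilon$ the connecting map becomes $\Sigma(j) \circ \varepsilon^{-1} \circ g$, as required.

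The only real obstacle is the path bookkeeping — verifying $s$ is a homotopy equivalence and that the square above commutes up to a homotopy compatible with $\varepsilon$. Both are formal, but require care with endpoint conventions and with the reparametrization defining $\star$.
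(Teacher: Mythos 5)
Your proposal is correct and follows the paper's own route: apply Lemma \ref{fiber} to the projection $p_Y\colon \HP(f,g)\to Y$, then transport the resulting distinguished triangle along the weak equivalence $s\colon H_{p_Y}\to H_f$ using the homotopy-commutative comparison diagram and naturality of $\varepsilon$. The paper leaves those verifications implicit (it simply cites ``Lemma \ref{fiber} and the diagram above''), whereas you spell out that $s$ is a $\pi_*$-isomorphism and that the left square commutes up to homotopy; one tiny slip is the parenthetical description of path lifting for $p_Y$, since the middle coordinate $\gamma$ cannot be held constant but must be extended to track $g$ of the moving endpoint, though this does not affect the validity of the argument.
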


Assume we have a sequence of $H$-spectra $X_i$ for $i\geq 0$, maps $f_i: X_i \to X_{i-1}$ and a map $g: X_0 \to X_0$. We recall that the unit of the loop-suspension adjunction 
\[
\eta: X_0  \xrightarrow{\sim} \Omega \Sigma(X_0)
\]
is a $\pi_*$-isomorphism and consider the following diagram:
\[\begin{tikzpicture}
\setlength{\TMP}{3pt}
\node(a){$X_0$}; \node(b)[right of = a,node distance = 6.5cm]{$\Omega \Sigma(X_0) \times \prod_{i=1}^\infty X_i$};
\node(c)[right of = b,node distance = 5cm]{$\prod_{i=1}^\infty X_i$};
\node(f)[below of = a,node distance = 1.7cm]{$X_0$};
\node(g)[right of = f,node distance = 6.5cm]{$\Omega \Sigma(X_0) \times \prod_{i=1}^\infty X_i$};
\node(h)[right of = g,node distance = 5cm]{$\prod_{i=1}^\infty X_i,$};

\draw[->](a) to node [above]{$\incl\circ \eta$} (b);
\draw[->](b) to node [above]{$\proj$} (c);

\draw[->] ([xshift=\TMP]a.south) to node [right]{$\id$} ([xshift=\TMP]f.north);
\draw[->] ([xshift=-\TMP]a.south) to node [left]{$g$} ([xshift=-\TMP]f.north);

\draw[->] ([xshift=\TMP]b.south) to node [right]{$\id$} ([xshift=\TMP]g.north);
\draw[->] ([xshift=-\TMP]b.south) to node [left]{$((\eta \circ f_1\circ \pr_1)\star \Sigma(g),f_2,f_3,\dots)$} ([xshift=-\TMP]g.north);

\draw[->] ([xshift=\TMP]c.south) to node [right]{$\id$} ([xshift=\TMP]h.north);
\draw[->] ([xshift=-\TMP]c.south) to node [left]{$q\circ \prod_{i\geq 1} f_i$} ([xshift=-\TMP]h.north);

\draw[->] (f) to node [below]{$\incl \circ \eta$} (g);
\draw[->] (g) to node [below]{$\proj$} (h);
\end{tikzpicture}\]
where  
\[(\eta \circ f_1\circ \pr_1)\star \Sigma(g): \Omega \Sigma (X_0) \times \prod_{i=1}^\infty X_i \to  \Omega \Sigma (X_0)\] is the map
\[
(\alpha,x) \mapsto\eta \circ f_1\circ \pr_1(x) \star \Sigma(g) \circ \alpha
\]
The diagram gives rise to a distinguished triangle connecting the vertical homotopy equalizers as we now explain. Consider the following diagram where the maps are defined below:
\[
\begin{tikzpicture}
\node(a){$ \Omega \Sigma X_0$}; \node(b)[right of = a,node distance = 3.5cm]{$\Omega \Sigma X_0$};
\node(c)[right of = b,node distance = 5cm]{$ \HE\big(q \circ \prod_{i\geq 1} f_i, \id\big).$};

\draw[->](a) to node [above]{$\scriptstyle \Omega \Sigma (g)-\id$} (b);
\draw[->](c) to node [above]{$\scriptstyle -\eta \circ f_1 \circ \pr_1$} (b);

\end{tikzpicture}
\]
The left hand map takes a loop $\alpha \in \Omega\Sigma X_0$ to the loop $ \Sigma g \circ \alpha \star  \overline{\alpha} \in \Omega\Sigma X_0$. The right hand map takes a pair 
\[
(\gamma, x)\in \HE\big(q \circ \prod_{i\geq 1} f_i, \id\big) \subseteq \prod_{i=1}^\infty X_i^{[0,1]} \times \prod_{i=1}^\infty X_i
\] to the loop $ \overline{\eta \circ f_1 \circ \pr_1(x)}$. A point in the homotopy pull-back of the diagram is a loop $\alpha \in \Omega\Sigma X_0$, a point $x\in  \prod_{i=1}^{\infty} X_i $ and paths 
\[ \gamma: q \circ \prod_{i\geq 1} f_i (x)\sim x, \quad  
\Sigma (g) \circ \alpha \star  \overline{\alpha} \sim  \overline{\eta \circ f_1 \circ \pr_1(x)}.
\]
The notation $x \sim y$ means that the path takes the value $x$ at 0 and the value $y$ at 1.

Next consider the diagram:
 \[\begin{tikzpicture}
\setlength{\TMP}{3pt}
\node(a){$ \Omega \Sigma X_0\times  \prod_{i=1}^{\infty} X_i$};
\node(b)[right of=a, node distance = 7cm]{$\Omega \Sigma X_0\times   \prod_{i=1}^{\infty} X_i .$};
\draw[->] ([yshift=\TMP]a.east) to node [above]{$\scriptstyle  (\eta \circ f_1 \circ \pr_1) \star\Sigma(g)\times (f_2, f_3, \dots)$} ([yshift=\TMP]b.west);
\draw[->] ([yshift=-\TMP]a.east) to node [below]{$\scriptstyle\id$} ([yshift=-\TMP]b.west);

\end{tikzpicture}\]
The top map takes a pair $(\alpha, x)\in \Omega\Sigma X_0 \times  \prod_{i=1}^{\infty} X_i $ to the pair 
\[
\Big( \eta \circ f_1 \circ \pr_1(x)  \star  \Sigma (g) \circ \alpha, q \circ \prod_{i\geq 1} f_i (x)\Big).
\] 
A point in the homotopy equalizer is therefor a loop $\alpha \in \Omega\Sigma X_0$, a point $x\in  \prod_{i=1}^{\infty} X_i $ and paths
\[ 
\eta \circ f_1 \circ \pr_1(x)  \star  \Sigma (g) \circ \alpha   \sim\alpha, \quad q \circ \prod_{i\geq 1} f_i (x) \sim x.
\]
Thus there is a homotopy equivalence from the homotopy pullback to the homotopy equalizer.  It follows from Lemma \ref{triangle HE} that there is a distinguished triangle of the form
\begin{align*}
\HE(\Omega \Sigma g,\id) \xrightarrow{} \HE((\eta \circ f_1 \star \Sigma g,f_2,f_3,\dots),\id) \xrightarrow{} \\ \HE(q \circ \prod f_i, \id) 
 \xrightarrow {- \Sigma(\iota) \Acirc \varepsilon^{-1} \Acirc \eta \Acirc f_1 \Acirc \pr_1}  \HE(\Omega \Sigma g,\id),
\end{align*}
where the first map is induced by the inclusion and the second map is induced by the projection and 
\[
\iota: \Omega \big( \Omega \Sigma (X_0)\big) \to \HE(\Omega \Sigma(g),\id)
\] 
was defined above when we described the construction of homotopy equalizers. Note that we have a commutative square
 \[
 \begin{tikzpicture}
 \setlength{\TMP}{3pt}
\node(a){$\Omega \Sigma (X_0)$}; \node(b)[right of = a,node distance = 4cm]{$ X_0$};
\node(c)[below of = a,node distance = 2cm]{$\Omega \Sigma (X_0)$};
\node(d)[right of = c,node distance = 4cm]{$X_0$};

\draw[->](b) to node [above]{$\eta$} (a);
\draw[->](b) to node [below]{$\sim$} (a);

\draw[->](d) to node [above]{$\eta$} (c);
\draw[->](d) to node [below]{$\sim$} (c);

\draw[->] ([xshift=\TMP]a.south) to node [right]{$\id$} ([xshift=\TMP]c.north);
\draw[->] ([xshift=-\TMP]a.south) to node [left]{$\Omega \Sigma (g)$} ([xshift=-\TMP]c.north);

\draw[->] ([xshift=\TMP]b.south) to node [right]{$\id$} ([xshift=\TMP]d.north);
\draw[->] ([xshift=-\TMP]b.south) to node [left]{$g$} ([xshift=-\TMP]d.north);

\end{tikzpicture}\]
Thus $\eta$ induces an $\pi_*$-isomorphism of homotopy equalizers. The triangle above simplifies to the following triangle under this identification:
\begin{align*}
\HE( g,\id) \xrightarrow{I} \HE((\eta \circ f_1 \star \Sigma g,f_2,f_3,\dots),\id) \xrightarrow{P} \\ \HE(q \circ \prod_{i\geq 1} f_i, \id) 
 \xrightarrow {- \Sigma(\iota) \Acirc \varepsilon^{-1}\Acirc f_1 \Acirc \pr_1}  \HE( g,\id),
\end{align*}
where $I$ is induced by 
\[
\incl \circ  \eta :X_0 \to \Omega \Sigma (X_0) \times \prod_{i=1}^\infty X_i,
\] 
$P$ is induced by the projection 
\[
\proj: \Omega \Sigma (X_0) \times \prod_{i=1}^\infty X_i \to \prod_{i=1}^\infty X_i,
\] 
and $\iota: \Omega X_0 \to  \HE( g,\id)$. Finally we note that 
$ \HE(q \circ \prod_{i\geq 1} f_i, \id)$ is a model for the homotopy limit $\holim_{i\geq 1} X_i $ and we obtain the following theorem:
\begin{theorem} \label{teknisk result}
The triangle
\[\HE(g,\id)  \xrightarrow{I} \HE((\eta \circ f_1 \star \Sigma g,f_2,f_3,\dots),\id) \xrightarrow{ P} \holim_{i\geq 1} X_i \xrightarrow{- \Sigma(\iota) \Acirc \varepsilon^{-1}\Acirc f_1 \Acirc \pr_1} \Sigma \HE(g,\id)\]
is distinguished.
\end{theorem}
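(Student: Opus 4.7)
The plan is to obtain the desired triangle as the image of a distinguished triangle produced by Lemma \ref{triangle HE}, after an explicit identification of one vertex and a transport along the $\pi_*$-isomorphism $\eta: X_0 \xrightarrow{\sim} \Omega\Sigma X_0$.

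First I would form the homotopy pullback spectrum $\HP$ of the cospan
\[
\Omega\Sigma X_0 \xrightarrow{\,\Omega\Sigma(g)-\id\,} \Omega\Sigma X_0 \xleftarrow{\,-\eta \Acirc f_1 \Acirc \pr_1\,} \HE\Bigl(q \Acirc \prod_{i\geq 1} f_i,\, \id\Bigr),
\]
whose points are quadruples $(\alpha, x, \gamma_1, \gamma_2)$ with $\gamma_1$ a path $q \Acirc \prod f_i(x) \sim x$ and $\gamma_2$ a path $\Sigma(g) \Acirc \alpha \star \bar\alpha \sim \overline{\eta \Acirc f_1 \Acirc \pr_1(x)}$. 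I would exhibit a $\pi_*$-isomorphism from $\HP$ to the homotopy equalizer $\HE((\eta \Acirc f_1 \star \Sigma g, f_2, f_3, \dots), \id)$ by reparametrizing $\gamma_2$ and splicing it with the canonical null-homotopy of $\eta \Acirc f_1 \Acirc \pr_1(x) \star \overline{\eta \Acirc f_1 \Acirc \pr_1(x)}$, thereby converting $\gamma_2$ into a single path from $\eta \Acirc f_1 \Acirc \pr_1(x) \star \Sigma(g)\Acirc \alpha$ to $\alpha$; this is the equivalence asserted in the paragraph preceding the statement.

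Second, I would apply Lemma \ref{triangle HE} to the above cospan. The homotopy fiber of the left leg $\Omega\Sigma(g) - \id$ is, by Lemma \ref{lemma}, exactly $\HE(\Omega\Sigma g, \id)$, so Lemma \ref{triangle HE} yields a distinguished triangle
\[
\HE(\Omega\Sigma g, \id) \longrightarrow \HP \longrightarrow \HE\Bigl(q \Acirc \prod f_i, \id\Bigr) \xrightarrow{\partial} \Sigma\, \HE(\Omega\Sigma g, \id)
\]
with connecting map $\partial = \Sigma(j) \Acirc \varepsilon^{-1} \Acirc (-\eta \Acirc f_1 \Acirc \pr_1)$. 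Finally, the commutative square displayed just before the theorem exhibits $\eta$ as intertwining $g$ with $\Omega\Sigma g$ and hence induces a $\pi_*$-isomorphism $\HE(g, \id) \xrightarrow{\sim} \HE(\Omega\Sigma g, \id)$; replacing $\HE(\Omega\Sigma g, \id)$ by $\HE(g, \id)$ in the above distinguished triangle and identifying $\HE(q \Acirc \prod f_i, \id)$ with $\holim_{i \geq 1} X_i$ produces the triangle in the statement, with $I$ coming from the inclusion $\incl \Acirc \eta$ into $\Omega\Sigma X_0 \times \prod_{i \geq 1} X_i$ and $P$ coming from the projection onto $\prod_{i \geq 1} X_i$.

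The main obstacle is Step 1: verifying that the constructed map from $\HP$ to the three-term homotopy equalizer is a $\pi_*$-isomorphism requires writing down a homotopy inverse and producing two natural homotopies, both relying on the canonical null-homotopy of $\alpha \star \bar\alpha$ and a reparametrization of the interval. All remaining steps are formal: Lemma \ref{triangle HE} supplies the triangulated algebra, and the triangle identities for the loop-suspension adjunction account both for the absorption of $\eta$ into the connecting map and for the sign in $-\eta \Acirc f_1 \Acirc \pr_1$ becoming a global sign in front of $\Sigma(\iota) \Acirc \varepsilon^{-1} \Acirc f_1 \Acirc \pr_1$.
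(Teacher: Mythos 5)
Your proposal follows essentially the same route as the paper's proof: form the homotopy pullback of the same cospan, identify it with the three-term homotopy equalizer via the reparametrization/null-homotopy argument, apply Lemma \ref{triangle HE}, identify the homotopy fiber of the left leg with $\HE(\Omega\Sigma g,\id)$ (the paper does this via the explicit formula for the map and the proof of Lemma \ref{lemma} rather than an outright citation, but the content is the same), and finally transport along the commutative square involving $\eta$ to replace $\HE(\Omega\Sigma g,\id)$ by $\HE(g,\id)$. All steps, including the absorption of $\eta$ and the sign in the connecting map, match the paper.
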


\subsection{Real topological cyclic homology}
We fix a prime $p$ and we let $\mathscr{F}_p$ denote the family of $O(2)$-subgroups generated by the subgroups $D_{p^n}$ and $C_{p^n}$ for all $n\geq 0$. Let $X$ be a $\mathscr{F}_p$-fibrant $O(2)$-cyclotomic spectrum. We let $R$ denote the $O(2)$-equivariant composition 
\[ R: \rho_{p}^*X^{C_p} \xrightarrow{\rho_{p}^*(\gamma)} \rho_{p}^* X^{gC_p} \xrightarrow{T_p} X.\]
Let $i: G \hookrightarrow O(2)$ denote the inclusion. We will let $X^{C_{p^n}}$ denote the underlying $G$-spectrum indexed on $i^*\mathcal{U}$. We have an isomorphism of orthogonal $G$-spectra
\[
(\rho_p^* X^{C_p})^{C_{p^{n-1}}} \cong X^{C_{p^n}},
\]
and we define a map of orthogonal $G$-spectra
\[
R_n: X^{C_{p^n}} \cong (\rho_p^* X^{C_p})^{C_{p^{n-1}}} \xrightarrow{R^{C_{p^{n-1}}}} X^{C_{p^{n-1}}}.
\]

\begin{remark}\label{splitting fib}
We let $X$ be an $O(2)$-cyclotomic spectrum and let $j_f: X \xrightarrow{\sim} X_f$ be a fibrant replacement and consider the following diagram:

 \[\begin{tikzpicture}
 
 \node(c)[left of = a,node distance = 4cm]{$\rho_{p}^*(X^c)^{gC_p}$};
\node(d)[below of = c,node distance = 2cm]{$\rho_{p}^*((X_f)^c)^{gC_p}$};
 \node(a){$\rho_{p}^*X^{gC_p}$};
\node(b)[below of = a,node distance = 2cm]{$\rho_{p}^*(X_f)^{gC_p}$};
\node(e)[right of = a,node distance = 4cm]{$X$};
\node(f)[below of = e, node distance = 2cm]{$X_f$};

\draw[->](c) to node [left]{$\rho_{p}^*((j_f)^c)^{gC_p}$} (d);
\draw[->](c) to node [right]{$\sim$} (d);

\draw[->](a) to node [left]{$\rho_{p}^*(j_f)^{gC_p}$} (b);
\draw[->](a) to node [right]{$\sim$} (b);

\draw[->](a) to node [above]{$T_p$} (e);

\draw[->](e) to node [left]{$j_f$} (f);
\draw[->](e) to node [right]{$\sim$} (f);

\draw[->,dashed](b) to node [below right]{$\hat{T}_p$} (f);

\draw[->](c) to node [above]{} (a);
\draw[->](d) to node [below]{} (b);

\end{tikzpicture}\]
Even though $X_f$ is not cyclotomic in the sense of Definition \ref{defcy}, we do have maps $\hat{T}_p: \rho_{p}^*(X_f)^{gC_p} \to X_f$ in the $O(2)$-stable homotopy category. We let $R$ denote the composition
\[
R: \rho_{p}^*(X_f)^{C_p}  \xrightarrow{\rho_p^*(\gamma)} \rho_{p}^*(X_f)^{gC_p} \xrightarrow{\hat{T}_p} X
\] 
We get maps in the $O(2)$-stable homotopy category
 \begin{align*} 
R_n &: X_f^{C_{p^n}} \to X_f^{C_{p^{n-1}}}
\end{align*}
by mimicking the above construction.
\end{remark}

In order to define the real topological cyclic homology at a prime $p$, we let 
\[
\TRR^n(A,D;p)=\THR(A, D)^{C_{p^n}},
\]
and define the $G$-spectrum $\TRR(A,D;p)$ to be the homotopy limit over the $R_n$ maps,  
\[\TRR(A, D;p):=\holim_{n,R_n} \TRR^n(A, D;p).\]
The inclusions of fixed points
\[F_n: X^{C_{p^n}} \to X^{C_{p^{n-1}}},\]
which we refer to as the Frobenius maps, are $G$-equivariant and induce a self map of $\TRR(A,D;p)$. In order to describe this map, we let $N_0$ denote the category
\[
\cdots \to n \to n-1 \to \cdots \to 2 \to 1 \to 0,
\]
and we let $\TRR^{(-)}(A,D;p): N_0 \to \Top_*$ denote the functors which maps $n \to n-1$ to $\THR(A, D)^{C_{p^n}} \xrightarrow{R_n} \THR(A, D)^{C_{p^{n-1}}} $. Let $\tau: N_0 \to N_0$ denote the translation functor $\tau(n)=n+1$. The Frobenius maps $F_n$ assemble into a natural transformation 
\[
F: \TRR^{(-)}(A, D;p) \circ \tau \Rightarrow \TRR^{(-)}(A, D;p),\] 
and we let $\varphi$ denote the composite
\[
\holim_{N_0} \TRR^n(A, D;p) \xrightarrow{\ind_{\tau}} \holim_{N_0} \TRR^{n+1}(A, D;p) \circ \tau  \xrightarrow{F} \holim_{N_0} \TRR^n(A, D;p).
\] 
\begin{definition}
The real topological cyclic homology  at $p$, $\TCR(A, D;p)$, is the homotopy equalizer, $\HE(\varphi,\id)$, of the diagram
 \[\begin{tikzpicture}
\setlength{\TMP}{2.5pt}
\node(a){$\TRR(A,D;p)$};
\node(b)[right of=a, node distance = 3.5cm]{$\TRR(A,D;p).$};
\draw[->] ([yshift=\TMP]a.east) to node [above]{$\scriptstyle \varphi$} ([yshift=\TMP]b.west);
\draw[->] ([yshift=-\TMP]a.east) to node [below]{$\scriptstyle\id$} ([yshift=-\TMP]b.west);

\end{tikzpicture}\]
\end{definition}

We conclude this section by describing the homotopy fiber of the restriction maps. Let $\mathcal{R}$ denote the family of subgroups of $O(2)$ consisting of the trivial subgroup and all order 2 subgroups generated by a reflection of the plane in a line trough the origin;
\[
\mathcal{R}=\{1, \left\langle t\omega\right\rangle \mid t\in \mathbb{T}, \omega \in G \}. 
\]
Write $E\mathcal{R}$ for the classifying space of this family, thus $E \mathcal{R}$ is an $O(2)$-CW-complex such that
\[E\mathcal{R}^H= \left\{ \begin{array}{rl}
 * &\mbox{ if $H\in \mathcal{R}$} \\
  \emptyset &\mbox{ if $H\notin \mathcal{R}.$}
       \end{array} \right. \]
The space $S(\mathbb{C}^{\infty})$ is a model for $E\mathcal{R}$, with $O(2)$-acting on $S(\mathbb{C})$ by multiplication and complex conjugation and on $S(\mathbb{C}^{\infty})$ via the homeomorphism $S(\mathbb{C}) \star S(\mathbb{C}) \star \cdots \cong S(\mathbb{C}^{\infty})$. We let $\tilde{E\mathcal{R}}$ denote the mapping cone of the map $E\mathcal{R}_+ \xrightarrow{} S^0$ which collapses $E\mathcal{R}$ to the non-basepoint. We have a cofibration sequence of based $O(2)$-spaces,
\[E\mathcal{R}_+ \xrightarrow{} S^0 \to \tilde{E\mathcal{R}} \to \Sigma E\mathcal{R}_+.\]
and we obtain a distinguished triangle of orthogonal $G$-spectra by smashing the sequence with an $O(2)$-spectrum $X$ and taking derived $C_{p^n}$-fixpoints.
\begin{align}\label{cof}
(E\mathcal{R}_+ \wedge X)_f^{C_{p^n}} \xrightarrow{} X_f^{C_{p^n}} \xrightarrow{\lambda^{C_{p^n}}} (\tilde{E\mathcal{R}} \wedge X)_f^{C_{p^n}} \to \Sigma (E\mathcal{R}_+ \wedge X)_f ^{C_{p^n}}.
\end{align}
We denote the left hand spectrum by $\operatorname{H}_{\boldsymbol{\cdot}}(C_{p^n}; X)$ and refer to it as the $G$-equivariant homology spectrum of the subgroup $C_{p^n}$ acting on the $O(2)$-spectrum $X$. The third term in the triangle identifies with the $G$-equivariant derived $C_{p^n}$-geometric fixed points; see \cite[Prop. 4.17]{MM}, giving the well-known isotropy separation sequence. 

\begin{lemma}\label{cofiber seq}
Let $X$ be a $O(2)$-cyclotomic spectrum. The triangle
\[\operatorname{H}_{\boldsymbol{\cdot}}(C_{p^n}; X) \xrightarrow{} X_f^{C_{p^n}} \xrightarrow{R_n} X_f^{C_{p^{n-1}}} \to \Sigma \operatorname{H}_{\boldsymbol{\cdot}}(C_{p^n}; X) ,\]
is distinguished in the $G$-stable homotopy category.
\end{lemma}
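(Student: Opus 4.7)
The plan is to derive the statement from the isotropy separation triangle \eqref{cof} by constructing a $\pi_*$-isomorphism of $G$-spectra
\[
(\tilde{E\mathcal{R}} \wedge X)_f^{C_{p^n}} \simeq X_f^{C_{p^{n-1}}}
\]
under which the map $\lambda^{C_{p^n}}$ corresponds to $R_n$. Since rotating and replacing a vertex by an isomorphic object preserves distinguished triangles in the $G$-stable homotopy category, this will yield the claim.

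For the identification of the third term I decompose the $C_{p^n}$-fixed points in stages as $((\tilde{E\mathcal{R}} \wedge X)^{C_p})_f^{C_{p^{n-1}}}$. The inner $C_p$-derived fixed points compute the derived $C_p$-geometric fixed points of $X$, as noted after \eqref{cof} using \cite[Prop. 4.17]{MM}. The cyclotomic structure, formulated as in Remark \ref{splitting fib}, provides an $\mathscr{F}$-equivalence $\rho_p^*(X_f)^{gC_p} \xrightarrow{\hat{T}_p} X_f$ in the $O(2)$-stable homotopy category, or equivalently $(X_f)^{gC_p} \simeq (\rho_p^{-1})^*X_f$ as $O(2)/C_p$-spectra. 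Taking $C_{p^{n-1}}$-fixed points of both sides and observing that $\rho_p^{-1}$ identifies the order $p^{n-1}$ subgroup of $O(2)/C_p$ with $C_{p^{n-1}} \leq O(2)$ then produces the desired $\pi_*$-isomorphism.

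To see that $\lambda^{C_{p^n}}$ corresponds to $R_n$ under this identification, I use naturality to write $\lambda^{C_{p^n}} = (\lambda^{C_p})^{C_{p^{n-1}}}$. By \cite[Prop. 4.17]{MM}, the map $\lambda^{C_p}$ is (up to the identification of the third term) the canonical natural transformation $\gamma$ from pointset to derived geometric fixed points. Post-composing with the equivalence $(\tilde{E\mathcal{R}} \wedge X)_f^{C_p} \simeq (\rho_p^{-1})^*X_f$ coming from $\hat{T}_p$ produces $\hat{T}_p \circ \gamma$, which is exactly the map $R$ defined just before Remark \ref{splitting fib}. Taking $C_{p^{n-1}}$-fixed points turns this composite into $R^{C_{p^{n-1}}}$, which is the definition of $R_n$.

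The main obstacle is the careful bookkeeping with cofibrant and fibrant replacements needed to transport the derived cyclotomic identification from the $O(2)$-stable homotopy category through $C_{p^{n-1}}$-fixed points and to verify that the resulting composite genuinely represents $R_n$ and not a shifted variant. The essential technical inputs are that the pointset fixed point functor preserves $\pi_*$-isomorphisms between fibrant objects and that the derived geometric fixed points are correctly modelled by the $\tilde{E\mathcal{R}}$-smash followed by derived fixed points, both of which have been established earlier in Section 2.
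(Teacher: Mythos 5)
Your proof follows the same route as the paper's own terse one-paragraph proof, which cites the isotropy separation triangle \eqref{cof}, the identification of its third term with derived geometric fixed points, and the cyclotomic $\pi_*$-isomorphism. You have correctly supplied the bookkeeping the paper leaves implicit: the staged decomposition of the $C_{p^n}$-fixed points through $C_p$, the role of $\gamma$ and $\hat{T}_p$ from Remark \ref{splitting fib} (noting $R=\hat{T}_p\circ\rho_p^*(\gamma)$, so the $\rho_p^*$ should appear), and the naturality argument matching $\lambda^{C_{p^n}}$ with $R_n$.
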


\begin{proof}
The result follows immediately using the distinguished triangle (\ref{cof}), the identification of $(\tilde{E\mathcal{R}} \wedge X)_f^{C_{p^n}}$ with the $G$-equivariant derived $C_{p^n}$-geometric fixed points and the $\pi_*$-isomorphism of $G$-spectra  $(X^c)^{gC_p} \to X^{gC_p} \xrightarrow{T_p} X $.
\end{proof}

\begin{remark}\label{G-equi}
Let $f: X \to Y$ be a map of $O(2)$-spectra. If both $X$ and $Y$ are cyclotomic and $f$ commutes with the cyclotomic structure maps, then $f$ commutes with $R$. If $f$ restricts to a $\pi_*$-isomorphism of $G$-spectra, then by induction using the distinguished triangle above, $f$ is a $\mathscr{F}_p$-equivalence. 
\end{remark}

\section{Spherical group rings}
 In this section we determine the real topological Hochschild homology of the spherical group ring $\mathbb{S}[\Gamma]$ of a topological group $\Gamma$ with anti-involution $\id[\Gamma]$ induced by taking inverses in the group. We then determine the $G$-homotopy type of $\TCR(\mathbb{S}[\Gamma], \id[\Gamma]; p)$ where $p$ is a prime. This is a generalization of results by Bökstedt-Hsiang-Madsen in \cite[Section 5]{BHM}; see also \cite[Section 4.4]{Ma}. 

\begin{theorem} \label{THR(sphere)}
Let $\Gamma$ be a topological group. There is a map of $O(2)$-orthogonal spectra
\[
i: \Sigma^\infty_{O(2)} \Bdi\Gamma_+ \to \THR(\mathbb{S}[\Gamma], \id[\Gamma]),
\]
commuting with the cyclotomic structures, which induces isomorphisms on $\pi_*^{C_{p^n}}(-)$ and $\pi_*^{D_{p^n}}(-)$ for all $n\geq 0$ and all primes $p$. 
\end{theorem}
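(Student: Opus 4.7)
The plan is to construct $i$ explicitly at the level of dihedral spaces, verify compatibility with the cyclotomic structure, and then prove the weak-equivalence statement by induction on $n$ using the isotropy-separation distinguished triangle of Lemma \ref{cofiber seq} combined with the Equivariant Approximation Lemma \ref{EAL}. Since $(\mathbb{S}[\Gamma])_0 = \Gamma_+$, for every $F \in \mathcal{F}$ and every $O(2)$-representation $V$ the vertex $(0)_{z\in F} \in I^F$ gives
\[
G_{S^V}^F\bigl((0)_{z\in F}\bigr) = \Map\Bigl(\bigwedge_{z\in F} S^0, \bigwedge_{z\in F} \Gamma_+ \wedge S^V\Bigr) \cong \Gamma^F_+ \wedge S^V,
\]
and the canonical inclusion into the homotopy colimit defines $i_{F,V} \colon \Gamma^F_+ \wedge S^V \to \THR(\mathbb{S}[\Gamma],\id[\Gamma])[F](V)$. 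Because the dihedral face and degeneracy maps of $\Bdi\Gamma$ are induced from the unit and multiplication of $\mathbb{S}[\Gamma]$, and the dihedral $\omega$-action $g \mapsto g^{-1}$ matches the transformation induced by $\id[\Gamma]$ on the vertex where every coordinate is $0$, these inclusions assemble into a natural transformation of dihedral $O(2)$-spaces; realization and assembly over $V$ yield the orthogonal $O(2)$-spectrum map $i$. The cyclotomic structure maps $p_r$ and $\tilde{T}_r$ are both built from the same ``pass to $C_r$-fixed points followed by $r$-fold diagonal'' construction, and since the vertex $(0)_{z \in C_r \cdot F}$ is automatically $C_r$-fixed while the restriction natural transformation $\Map(S^0,-)^{C_r} \to \Map(S^0,(-)^{C_r})$ is the identity, the compatibility of $i$ with the cyclotomic structures is formal.

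Since $\pi^{D_{p^n}}_*(X) = \pi^G_*(X^{C_{p^n}})$, it suffices to prove that $i^{C_{p^n}}$ is a $\pi_*$-isomorphism of $G$-spectra for every $n \geq 0$ and every prime $p$. I induct on $n$. For each $n \geq 1$, Lemma \ref{cofiber seq} applied to both $X = \Sigma^\infty_{O(2)}\Bdi\Gamma_+$ and $X = \THR(\mathbb{S}[\Gamma],\id[\Gamma])$ gives a distinguished triangle in the $G$-stable category
\[
\operatorname{H}_{\boldsymbol{\cdot}}(C_{p^n}; X) \to X_f^{C_{p^n}} \xrightarrow{R_n} X_f^{C_{p^{n-1}}} \to \Sigma \operatorname{H}_{\boldsymbol{\cdot}}(C_{p^n}; X),
\]
natural in cyclotomic maps and hence compatible with $i$. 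The inductive hypothesis and the five-lemma reduce the claim to the base case $n = 0$ together with the assertion that $i$ induces a $\pi_*$-iso of $G$-spectra on the Borel spectra $\operatorname{H}_{\boldsymbol{\cdot}}(C_{p^n}; -) = (E\mathcal{R}_+ \wedge -)_f^{C_{p^n}}$. For the latter, since $\mathcal{R}|_{D_{p^n}}$ consists only of the trivial subgroup and reflection subgroups (each of which is $O(2)$-conjugate to $G = \langle\omega\rangle$), both $\pi^1_*$ and $\pi^G_*$ of the Borel spectrum depend only on the underlying $G$-equivariant homotopy type of $X$, and so the Borel step also follows from the base case.

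For $n = 0$ the claim becomes that $i$ is a weak equivalence of underlying non-equivariant spectra and induces a weak equivalence on $G$-fixed points. Non-equivariantly this is the classical calculation of \cite[Section 5]{BHM}, which we only revisit to check its $G$-naturality. For the $G$-fixed points I invoke Proposition \ref{EAL}\eqref{part 2} with $r = 1$: for every finite $F \subset \mathbb{T}$ containing $\{1, -1\}$ and every $k \geq 0$ there exists $N$ such that the $G$-equivariant inclusion $G_{S^V}^F\circ \triangle^e_1(\underline{i}) \hookrightarrow \hocolim_{I^F} G_{S^V}^F$ is $k$-connected whenever $\underline{i} \geq N$. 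Combined with the Equivariant Suspension Theorem applied to the unit map
\[
\Gamma^F_+ \wedge S^V \to \Map\Bigl(\bigwedge_{z\in F} S^{i_z}, \Gamma^F_+ \wedge \bigwedge_{z\in F} S^{i_z} \wedge S^V\Bigr),
\]
whose $G$-equivariant connectivity (under the $G = D_1$-action of Remark \ref{action}) tends to infinity with $\underline{i}$ by Assumptions \ref{concond}, this shows that after sufficient $V$-stabilization the map $i_{F,V}$ is a $G$-equivariant weak equivalence at level $F$; assembling over the filtered colimit $\colim_{F \in \mathcal{F}}$, which commutes with $G$-fixed points, then completes the base case. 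The main obstacle is precisely this last identification: the $G$-fixed points of the mapping space carry the explicit $G$-action of Remark \ref{action}, and matching them up to sufficient connectivity with $(\Gamma^F_+)^G \wedge (S^V)^G$ requires the restricted structure-map estimates of Assumptions \ref{concond}, designed so that $(A_n)^{D_n \circ \omega_n}$ satisfies Bökstedt-type connectivity bounds and makes the $G$-equivariant analogue of the BHM argument go through.
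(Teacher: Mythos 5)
Your proposal is correct and follows essentially the same route as the paper: construct $i$ at the level of dihedral spaces via the vertex $(0)_{z\in F}$ and the adjunction unit, note that it commutes with the cyclotomic structure maps, reduce (via the isotropy-separation triangle of Lemma~\ref{cofiber seq} and the conjugacy of reflections to $G$, i.e.\ the content of Remark~\ref{G-equi}) to showing $i$ is a $\pi_*$-isomorphism of underlying $G$-spectra, and then establish this at each level $V$ by combining the Equivariant Suspension Theorem for the adjunction unit with Proposition~\ref{EAL}\eqref{part 2} (at $r=1$) for the inclusion into the homotopy colimit. One small slip worth flagging: in your last paragraph you say the connectivity of the unit map $\Gamma^F_+ \wedge S^V \to \Map(\wedge_{z\in F} S^{i_z},\,\Gamma^F_+ \wedge \wedge_{z\in F} S^{i_z} \wedge S^V)$ tends to infinity with $\underline{i}$ by Assumptions~\ref{concond}; in fact its connectivity grows with $V$ (roughly $2\dim(V^H)-1$ on $H$-fixed points, independent of $\underline{i}$), whereas it is the inclusion $G^{F}_{S^V}\circ\triangle^e_1(\underline{i}) \hookrightarrow \hocolim_{I^{F}} G^{F}_{S^V}$ whose connectivity grows with $\underline{i}$ via Assumptions~\ref{concond} and Proposition~\ref{EAL}; only the composite requires both kinds of stabilization, and the conclusion you draw is nevertheless the correct one.
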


\begin{proof}
Let $V$ be an $O(2)$-representation and let $F\subset \mathbb{T}$ be a finite subset. We define the map $i_V[F]$ to be the composition
\begin{align*}
\underset{z\in F}{\wedge} \Gamma_+ \wedge S^V &\cong \Map\Big(\underset{z\in F}{\wedge} S^0, \big(\underset{z\in F}{\wedge} S^0\big) \wedge \big(\underset{z\in F}{\wedge} \Gamma_+\big) \wedge S^V \Big) \\ 
&\to \underset{I^F}{\hocolim} \ \Map\Big(\underset{z\in F}{\wedge} S^{i_z}, \big(\underset{z\in F}{\wedge} S^{i_z} \big) \wedge \big(\underset{z\in F}{\wedge} \Gamma_+\big) \wedge S^V \Big)\\
&\cong \underset{I^F}{\hocolim} \ \Map\Big(\underset{z\in F}{\wedge} S^{i_z}, \big(\underset{z\in F}{\wedge} (S^{i_z} \wedge \Gamma_+)\big)  \wedge S^V \Big), 
\end{align*}
where the last map is induced by the natural transformation given by permuting the smash factors of the target. These maps commute with the dihedral structure and the $O(2)$-action on $V$, thus we obtain an $O(2)$-equivariant map on realizations:
\[i_V: \Bdi\Gamma_+  \wedge S^V \to \THR(\mathbb{S}[\Gamma], \id[\Gamma])(V).  \]
The map $i$ commutes with the cyclotomic structure, hence by Remark \ref{G-equi}, it suffices to check that $i$ restricts to a $\pi_*$-isomorphism of $G$-spectra. It follows from \cite[Chapter XVI, Thm. 6.4]{May96}, that it suffices to show that $i$ induces an isomorphism on $\pi_*((-^c)^{gC_p})$  and $\pi^e_*(-)$, hence we must show that the connectivity of the induced map 
\[(i_V)^H: (\Bdi\Gamma_+  \wedge S^V)^H \to (\THR(\mathbb{S}[\Gamma], \id[\Gamma])(V))^H,  \]
is $(\dim(V^H) + \epsilon(V))$-connected, where $H\in \{e,G\}$ and $\epsilon(V)$ tends to infinity with $V$. Let $i\in \ob\big((I^{G})^{\{1, -1\}}\times I^{GF/F^-}\big) $. After we restrict to the cofinal subcategory $G\mathcal{F}_*$ , then the map $i_V$ in simplicial level $G\cdot F$ is equal to the composite: 
\[
\begin{tikzpicture}
\node(a){$ \underset{z\in G \cdot F}{\wedge} \Gamma_+ \wedge S^V$};
\node(c)[below of= a,node distance = 3cm]{$ \underset{I^{G \cdot F}}{\hocolim} \ \Map\Big(\underset{z\in G \cdot F}{\wedge} S^{i_{\overline{z}}}, \underset{z\in G \cdot F}{\wedge} S^{i_{\overline{z}}} \wedge S^V \wedge \underset{z\in G \cdot F}{\wedge} \Gamma_+\Big).$};
\node(e)[below of=a, node distance = 1.5cm]{$\Map\Big(\underset{z\in G \cdot F}{\wedge} S^{i_{\overline{z}}}, \underset{z\in G \cdot F}{\wedge} S^{i_{\overline{z}}} \wedge S^V \wedge \underset{z\in G \cdot F}{\wedge} \Gamma_+\Big)$};

\draw[->](a) to node [right]{$\eta$} (e);

\draw[->](e) to node [above]{} (c);

\end{tikzpicture}\]
The top map is the adjunction unit, which by the Equivariant Suspension Theorem \ref{suspension} is at least $2\cdot \dim(V)-1$ connected as a map of non-equivariant spaces and $2 \cdot \dim(V^G)-1$ connected on $G$-fixed points. By Lemma \ref{subcat} we can make the lower map as connected as desired as a map of $G$-spaces by choosing $i\in \ob\big ( (I^{G})^{\{1, -1\}}\times I^{GF/F^-}\big)$ big enough. Thus the composite has the desired connectivity.
\end{proof}

The calculation of $\TCR(\mathbb{S}[\Gamma], \id[\Gamma];p)$ relies on the fact that the restriction map
\[
R_n: \THR(\mathbb{S}[\Gamma], \id[\Gamma])^{C_{p^n}} \to \THR(\mathbb{S}[\Gamma], \id[\Gamma])^{C_{p^{n-1}}}
\]
splits. In \cite[Lemma 6.2.5.1]{DGM}, a splitting of $R_n$ is constructed and it is straight forward to check that the splitting is indeed $G$-equivariant. We denote this section $S_n$.
Recall that we defined $O(2)$-equivariant homeomorphisms $p_r: \Bdi\Gamma \to \rho_r^*\Bdi\Gamma^{C_r}$. We let $\Delta_r$ denote the $G$-equivariant composition
\[
\Bdi\Gamma \xrightarrow{p_r} \Bdi\Gamma^{C_r} \hookrightarrow{}\Bdi\Gamma.
\]
There is a commutative diagram in the $G$-stable homotopy category where $i$ is the $\mathscr{F}_p$-equivalence of Theorem \ref{THR(sphere)}; see \cite[Corollary 4.4.11]{Ma}
 \[
 \begin{tikzpicture}
\node(a){$ \THR(\mathbb{S}[\Gamma], \id[\Gamma])$}; 
\node(b)[right of = a,node distance = 4cm]{$\Sigma^{\infty}_{O(2)} \Bdi\Gamma_+ $};
\node(e)[below of = a,node distance = 1.8cm]{$\THR(\mathbb{S}[\Gamma], \id[\Gamma])$};
\node(f)[right of = e,node distance = 4cm]{$\Sigma^{\infty}_{O(2)} \Bdi\Gamma_+ $};

\draw[->](b) to node [above]{$\sim$} (a);
\draw[->](b) to node [below]{$i$} (a);

\draw[->](f) to node [above]{$\sim$} (e);
\draw[->](f) to node [below]{$i$} (e);

\draw[->] (a) to node [left]{$F_1 \circ S_1$} (e);

\draw[->] (b) to node [right]{$\Sigma^{\infty} {\Delta_p}_+$} (f);

\draw[->](f) to node [above]{$\sim$} (e);

\end{tikzpicture}
\]
We set $T:=\THR(\mathbb{S}[\Gamma], \id[\Gamma])$ to ease notation and we recall the notation for the $G$-equivariant homology spectrum
\[\mathrm{H}_{\boldsymbol{\cdot}}(C_{p^{n}}; \Bdi\Gamma)=
(E\mathcal{R}_+ \wedge \Sigma^{\infty}_{O(2)} \Bdi\Gamma_+ )_f^{C_{p^{n}}}, \quad n\geq 1. \] 
If $n=0$, then we let 
\[\mathrm{H}_{\boldsymbol{\cdot}}(1; \Bdi\Gamma) := 
\Sigma^{\infty}_{G} \Bdi\Gamma_+.\]
Let $c: E\mathcal{R}_+ \wedge \Sigma^{\infty}_{O(2)} \Bdi\Gamma_+ \xrightarrow{} \Sigma^{\infty}_{O(2)} \Bdi\Gamma_+$ denote the map that collapses $E\mathcal{R}$ to a point. We let $c_n$ denote the composition
\[
\mathrm{H}_{\boldsymbol{\cdot}}(C_{p^{n}}; \Bdi\Gamma) \xrightarrow{(c_f)^{C_{p^n}}} \big(\Sigma^{\infty}_{O(2)} \Bdi\Gamma_+\big)_f^{C_{p^n}}  \xrightarrow{} T^{C_{p^n}}.
\]
where the last map is induced by the $\mathscr{F}_p$-equivalence $i$ of Theorem \ref{THR(sphere)} and fibrant replacement. By Lemma \ref{cofiber seq} we have distinguished triangles of $G$-spectra
\begin{align*}
\operatorname{H}_{\boldsymbol{\cdot}}(C_{p^{n}}; \Bdi\Gamma)\xrightarrow{ c_n} T^{C_{p^{n}}} \xrightarrow{R_n} T^{C_{p^{n-1}}} \to \Sigma \operatorname{H}_{\boldsymbol{\cdot}}(C_{p^{n}}; \Bdi\Gamma)
\end{align*}
which split and provide an isomorphism in the $G$-stable homotopy category
\[
S_n \circ \cdots \circ S_1 \circ i \vee   \cdots \vee S_n \circ c_{n-1} \vee c_n:\bigvee_{j=0}^{n} \operatorname{H}_{\boldsymbol{\cdot}}(C_{p^j}; \Bdi\Gamma) \xrightarrow{\sim}T^{C_{p^{n}}}.
\]
There is a commutative diagram, where the projection maps collapse the $n$th summand to the basepoint:
\[
 \begin{tikzpicture}
\node(a){$ T^{C_{p^{n}}}$}; 
\node(b)[right of = a,node distance = 4cm]{$ \bigvee_{j=0}^{ n} \operatorname{H}_{\boldsymbol{\cdot}}(C_{p^j}; \Bdi\Gamma) $};
\node(c)[right of = b,node distance = 5cm]{$ \prod_{j=0}^{ n} \operatorname{H}_{\boldsymbol{\cdot}}(C_{p^j}; \Bdi\Gamma) $};
\node(e)[below of = a,node distance = 2cm]{$ T^{C_{p^{n-1}}} $};
\node(f)[right of = e,node distance = 4cm]{$ \bigvee_{j=0}^{ n-1} \operatorname{H}_{\boldsymbol{\cdot}}(C_{p^j}; \Bdi\Gamma) $};
\node(g)[right of = f,node distance = 5cm]{$ \prod_{j=0}^{ n-1} \operatorname{H}_{\boldsymbol{\cdot}}(C_{p^j}; \Bdi\Gamma) $};

\draw[->](b) to node [above]{$\sim$} (a);

\draw[->] (a) to node [right]{$R_n$} (e);

\draw[->] (b) to node [right]{$ \proj$} (f);

\draw[->](f) to node [above]{$\sim$} (e);

\draw[->] (b) to node [above]{$\sim$} (c);
\draw[->] (f) to node [above]{$\sim$} (g);
\draw[->] (c) to node [right]{$\proj$} (g);
\end{tikzpicture}
\]
Combined with the canonical map from the limit to the homotopy limit we obtain a canonical isomorphism in the $G$-stable homotopy category  
\[
 \TRR(\mathbb{S}[\Gamma], \id[\Gamma];p) \sim  \prod_{j=0}^{\infty} \operatorname{H}_{\boldsymbol{\cdot}}(C_{p^j}; \Bdi\Gamma).
 \] 
 We proceed to identify the Frobenius maps $F_n$. We have inclusions
 \[
 \incl: (E\mathcal{R}_+ \wedge \Sigma^{\infty}_{O(2)} \Bdi\Gamma_+ )_f^{C_{p^{n}}}\to (E\mathcal{R}_+ \wedge \Sigma^{\infty}_{O(2)} \Bdi\Gamma_+ )_f^{C_{p^{n-1}}}.
 \] 
 We abuse notation slightly and let $c$ denote the composite
\[
 (E\mathcal{R}_+ \wedge \Sigma^{\infty}_{G} \Bdi\Gamma_+ )_f \xrightarrow{j_f^{-1}}  E\mathcal{R}_+ \wedge  \Sigma^{\infty}_{G} \Bdi\Gamma_+ \xrightarrow{c} \Sigma^{\infty}_{G} \Bdi\Gamma_+,
\] 
where $j_f$ is fibrant replacement. There is a commutative diagram:
 \[
           \hspace*{2.4cm}
 \begin{tikzpicture}
\node(a){$ T^{C_{p^{n}}}$}; \node(b)[right of = a,node distance = 5cm]{$ \Sigma^{\infty}_{G} \Bdi\Gamma_+  \vee  \bigvee_{j=1}^{ n} \operatorname{H}_{\boldsymbol{\cdot}}(C_{p^j}; \Bdi\Gamma) $};
\node(e)[below of = a,node distance = 2cm]{$ T^{C_{p^{n-1}}} $};
\node(f)[right of = e,node distance = 5cm]{$ \Sigma^{\infty}_{G} \Bdi\Gamma_+  \vee  \bigvee_{j=1}^{n-1} \operatorname{H}_\bullet(C_{p^j}; \Bdi\Gamma). $};

\draw[->](b) to node [above]{$\sim$} (a);

\draw[->] (a) to node [left]{$F_n$} (e);

\draw[->] (b) to node [right]{$ \Sigma^{\infty}{\Delta_p}_+  \vee c \circ \incl \vee \incl \vee \cdots \vee \incl$} (f);

\draw[->](f) to node [above]{$\sim$} (e);

\end{tikzpicture}
\]
We can now identify the self-map $\varphi$ of $\TRR(\mathbb{S}[\Gamma], \id[\Gamma];p)$ induced by the Frobenius maps $F_n$. We have the following commutative diagram, where $\TCR(\mathbb{S}[\Gamma], \id[\Gamma];p)$ is the homotopy equalizer of the two middle maps:
\adhoc{.97}{
\begin{tikzpicture}
\setlength{\TMP}{3pt}
\node(a){$\Sigma^{\infty}_{G} \Bdi\Gamma_+$}; 
\node(b)[right of = a,node distance = 5.5cm]{$ \displaystyle \Omega \Sigma \big(\Sigma^{\infty}_{G} \Bdi\Gamma_+\big) \times   \prod_{j=1}^{\infty}\operatorname{H}_{\boldsymbol{\cdot}}(C_{p^j}; \Bdi\Gamma)$};
\node(c)[right of = b,node distance = 5.8cm]{$ \displaystyle  \prod_{j=1}^{\infty}\operatorname{H}_{\boldsymbol{\cdot}}(C_{p^j}; \Bdi\Gamma)$};

\node(f)[below of = a,node distance = 2.5cm]{$\Sigma^{\infty}_{G} \Bdi\Gamma_+$};
\node(g)[right of = f,node distance = 5.5cm]{$\displaystyle  \Omega \Sigma \big(\Sigma^{\infty}_{G} \Bdi\Gamma_+\big)   \times   \prod_{j=1}^{\infty}\operatorname{H}_{\boldsymbol{\cdot}}(C_{p^j}; \Bdi\Gamma)$};
\node(h)[right of = g,node distance = 5.8cm]{$  \displaystyle  \prod_{j=1}^{\infty}\operatorname{H}_{\boldsymbol{\cdot}}(C_{p^j}; \Bdi\Gamma)$};

\draw[->] ([xshift=\TMP]a.south) to node [right]{$\id$} ([xshift=\TMP]f.north);
\draw[->] ([xshift=-\TMP]a.south) to node [left]{$\Sigma^{\infty}{\Delta_p}_+$} ([xshift=-\TMP]f.north);

\draw[->] ([xshift=\TMP]b.south) to node [right]{$\id$} ([xshift=\TMP]g.north);
\draw[->] ([xshift=-\TMP]b.south) to node [left]{$X$} ([xshift=-\TMP]g.north);

\draw[->] ([xshift=\TMP]c.south) to node [right]{$\id$} ([xshift=\TMP]h.north);
\draw[->] ([xshift=-\TMP]c.south) to node [left]{$q\circ \prod_{i\geq 1} \incl$} ([xshift=-\TMP]h.north);

\draw[->](a) to node [above]{$\incl \circ \eta$} (b);
\draw[->](b) to node [above]{$\proj$} (c);

\draw[->] (f) to node [below]{$\incl \circ \eta$} (g);
\draw[->] (g) to node [below]{$\proj$} (h);
\end{tikzpicture}
}
where $X=((\eta \circ c \circ \incl \circ \pr_1) \star\Sigma(\Sigma^{\infty}{\Delta_p}_+), \incl, \dots , \incl)$ and the first map in the tuple takes a pair
\[
(\alpha, x) \in \Omega \Sigma \big(\Sigma^{\infty}_{G} \Bdi\Gamma_+\big)   \times   \prod_{j=1}^{\infty}\operatorname{H}_{\boldsymbol{\cdot}}(C_{p^j}; \Bdi\Gamma)
\]
to the loop
\[
\big(\eta \circ c \circ \incl \circ \pr_1(x)\big) \star \big( \Sigma(\Sigma^{\infty}{\Delta_p}_+) \circ \alpha \big) \in \Omega \Sigma \big(\Sigma^{\infty}_{G} \Bdi\Gamma_+\big) .
\]
We let $\Sigma^{\infty}_{G} \Bdi\Gamma_+^{\eqq{\Delta_p=\id}} $ denote the homotopy equalizer of $\Sigma^{\infty}{\Delta_p}_+$ and the identity,
\[
\Sigma^{\infty}_{G} \Bdi\Gamma_+^{\eqq{\Delta_p=\id}} := \HE(\Sigma^{\infty}{\Delta_p}_+, \id)
.\]
Recall that we constructed a map $\iota: \Omega (\Sigma^{\infty}_{G} \Bdi\Gamma_+) \to \Sigma^{\infty}_{G} \Bdi\Gamma_+^{\eqq{\Delta_p=\id}}$. The map $\incl \circ \eta$ and the projection induce the maps $I$ and $P$ in the following theorem, which follows directly from Theorem \ref{teknisk result}.

\begin{theorem}\label{result}
The triangle
\begin{align*}
\Sigma^{\infty}_{G} \Bdi\Gamma_+^{\eqq{\Delta_p=\id}}  \xrightarrow{I} \TCR(\mathbb{S}[\Gamma], \id[\Gamma]; p) \xrightarrow{P} \holim_{j\geq 1}\operatorname{H}_{\boldsymbol{\cdot}}(C_{p^j};  \Bdi\Gamma) \\ 
\xrightarrow{- \Sigma(\iota) \Acirc \varepsilon^{-1}\Acirc c \Acirc \incl \Acirc \pr_1} \Sigma \left(\Sigma^{\infty}_{G} \Bdi\Gamma_+^{\eqq{\Delta_p=\id}}\right),
\end{align*}
is distinguished in the $G$-stable homotopy category, where $\varepsilon: \Sigma \Omega Y \to Y$ is the counit of the loop-suspension adjunction.
\end{theorem}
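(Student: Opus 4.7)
The plan is to recognize the statement as a direct application of Theorem \ref{teknisk result}. Essentially all of the setup has been carried out in the discussion just above: the sections $S_n$ of the restriction maps produce an isomorphism
\[
\TRR(\mathbb{S}[\Gamma],\id[\Gamma];p) \;\simeq\; \prod_{j\geq 0} \operatorname{H}_{\boldsymbol{\cdot}}(C_{p^j};\Bdi\Gamma)
\]
in the $G$-stable homotopy category, and under this isomorphism the Frobenius-induced self-map $\varphi$ of $\TRR(\mathbb{S}[\Gamma],\id[\Gamma];p)$ is identified with the product self-map whose $0$th component is $\Sigma^{\infty}{\Delta_p}_+$ and whose $(j\geq 1)$st components use $c\circ\incl$ (landing in the $0$th factor) together with $\incl$ (landing in the $(j-1)$st factor for $j\geq 2$), exactly as in the commutative square displayed earlier.

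In the notation of Theorem \ref{teknisk result}, I would set $X_0 = \Sigma^{\infty}_{G}\Bdi\Gamma_+$ and $X_i = \operatorname{H}_{\boldsymbol{\cdot}}(C_{p^i};\Bdi\Gamma)$ for $i\geq 1$, with structure maps $g = \Sigma^{\infty}{\Delta_p}_+$, $f_1 = c\circ\incl$, and $f_i = \incl$ for $i\geq 2$. With these choices, $\HE(g,\id)$ is precisely $\Sigma^{\infty}_{G}\Bdi\Gamma_+^{\eqq{\Delta_p=\id}}$, the homotopy limit $\holim_{i\geq 1} X_i$ is $\holim_{j\geq 1}\operatorname{H}_{\boldsymbol{\cdot}}(C_{p^j};\Bdi\Gamma)$, and the middle term
\[
\HE\bigl((\eta\circ f_1 \star \Sigma g,\,f_2,\,f_3,\dots),\,\id\bigr)
\]
is identified with $\TCR(\mathbb{S}[\Gamma],\id[\Gamma];p) = \HE(\varphi,\id)$ through the product identification of $\TRR(\mathbb{S}[\Gamma],\id[\Gamma];p)$. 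The distinguished triangle produced by Theorem \ref{teknisk result} is then literally the triangle in the statement, with $I$, $P$, and the connecting map of the form given there.

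The only bookkeeping point is verifying that the identification of $\varphi$ with the map
\[
X \;=\; \big((\eta\circ c \circ \incl \circ \pr_1) \star \Sigma(\Sigma^{\infty}{\Delta_p}_+),\ \incl,\ \incl,\dots\big)
\]
descends to an identification of homotopy equalizers that intertwines $\incl \circ \eta$ and the projection with the maps $I$ and $P$ appearing in Theorem \ref{teknisk result}. Since every relevant square commutes in the $G$-stable homotopy category, applying the homotopy-invariant construction $\HE(-,\id)$ yields an isomorphism of distinguished triangles, and the output is exactly the triangle claimed. I do not expect a genuine obstacle here, as Theorem \ref{teknisk result} was formulated precisely for this application.
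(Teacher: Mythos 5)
Your proposal is correct and follows essentially the same route as the paper: the discussion preceding the theorem establishes the product identification of $\TRR$ and the description of $\varphi$ on the product, and the paper then simply invokes Theorem \ref{teknisk result} with exactly the data $X_0 = \Sigma^{\infty}_{G}\Bdi\Gamma_+$, $X_i = \operatorname{H}_{\boldsymbol{\cdot}}(C_{p^i};\Bdi\Gamma)$, $g = \Sigma^{\infty}{\Delta_p}_+$, $f_1 = c\circ\incl$, $f_i = \incl$ for $i\geq 2$ that you identify. The bookkeeping point you flag is handled in the paper by the displayed commutative ladder diagram immediately before the theorem, which is precisely the input Theorem \ref{teknisk result} requires.
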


\begin{remark}
The inclusion of index categories induces a canonical $\pi_*$-isomorphism 
\[
\holim_{j\geq 0} \operatorname{H}_{\boldsymbol{\cdot}}(C_{p^j};  \Bdi\Gamma)   \xrightarrow{\text{can}} \holim_{j\geq 1} \operatorname{H}_{\boldsymbol{\cdot}}(C_{p^j};  \Bdi\Gamma) 
\]
and the composite $\incl \circ \pr_1$ in the triangle can be replaced by $\pr_0 \circ \text{can}^{-1}$.
\end{remark}

A non-equivariant identification of the homotopy limit after $p$-completion appears in \cite{BHM} and in more generality in \cite[Lemma 4.4.9]{Ma}. The result generalizes immediately to the equivariant setting. Let $X$ be an $O(2)$-spectrum. The $O(2)$-spectrum $E\mathcal{R}_+ \wedge X$ is $C_{p^n}$-free. It follows from the generalized Adams isomorphism \cite[Chapter XVI, Thm. 5.4]{May96} that there are isomorphisms in the $G$-stable homotopy category
\[E\mathcal{R}_+ \wedge_{C_{p^n}} X \xrightarrow{\sim} (E\mathcal{R}_+ \wedge X)_f^{C_{p^n}}\] 
and under this isomorphism the inclusions of fixed points on the right hand side correspond to the $G$-equivariant transfers on the left hand side 
 \[
 \begin{tikzpicture}
\node(a){$E\mathcal{R}_+ \wedge_{C_{p^n}} X$}; \node(b)[right of = a,node distance = 4cm]{$ (E\mathcal{R}_+ \wedge X)_f^{C_{p^n}} $};
\node(e)[below of = a,node distance = 2cm]{$E\mathcal{R}_+ \wedge_{C_{p^{n-1}}} X$};
\node(f)[right of = e,node distance = 4cm]{$ (E\mathcal{R}_+ \wedge X)_f^{C_{p^{n-1}}}.$};

\draw[->](a) to node [above]{$\sim$} (b);

\draw[->] (a) to node [left]{$\trf$} (e);

\draw[->] (b) to node [right]{$\incl$} (f);

\draw[->](e) to node [above]{$\sim$} (f);

\end{tikzpicture}
\]
We obtain an isomorphism in the $G$-stable homotopy category
\[
\holim_{\trf} E\mathcal{R}_+ \wedge_{C_{p^i}} X \xrightarrow{\sim} \holim_{i\geq 1} \operatorname{H}_{\boldsymbol{\cdot}}(C_{p^i}; X),
\]
and we can identify the left hand homotopy limit after $p$-completion:

\begin{theorem}\label{transfer}
Let $X$ be an orthogonal $O(2)$-spectrum. The $\mathbb{T}$-transfer
\[\trf_{\mathbb{T}}: \Sigma^{1,1}  E\mathcal{R}_+ \wedge_{\mathbb{T}} X  \to  \holim_{\trf} E\mathcal{R}_+ \wedge_{C_{p^i}} X,\]
induces an isomorphism in the $G$-stable homotopy category after $p$-completion, where $\Sigma^{1,1}$ denotes suspension with the sign representation of $G$.
\end{theorem}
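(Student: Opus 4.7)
The plan is to exploit the fact that the non-equivariant version of this statement is already known (appearing both in \cite{BHM} and in greater generality as \cite[Lemma 4.4.9]{Ma}), so the task reduces to verifying that the $G$-action is compatible with every construction involved and that the non-equivariant comparison extends to the $G$-fixed points after $p$-completion. First I would verify that the $\mathbb{T}$-transfer map is genuinely $G$-equivariant: since $\mathbb{T}$ acts on $S(\mathbb{C})$ by multiplication and $G$ acts by complex conjugation, the tangent bundle of the circle orbit carries the sign representation of $G$, which accounts for the appearance of $\Sigma^{1,1}$. The Pontryagin-Thom construction of $\trf_{\mathbb{T}}$ respects both actions, and it is compatible with the transfers in the tower because all choices can be made $O(2)$-equivariantly.

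Once $\trf_{\mathbb{T}}$ is known to be a map in the $G$-stable homotopy category, I would reduce the statement that it is a $p$-completion equivalence to checking isomorphisms on $\pi^H_*(-)$ for $H \in \{1, G\}$, using the detection of $G$-equivalences by these two families of homotopy groups. For $H = 1$, the claim is precisely the non-equivariant statement of \cite[Lemma 4.4.9]{Ma}, which proceeds via the Milnor $\lim^1$ sequence and a cofiber-by-cofiber analysis of the filtration of $E\mathcal{R} = S(\mathbb{C}^\infty)$ by the finite-dimensional unit spheres $S(\mathbb{C}^n)$; each successive cofiber identifies with a suspension spectrum whose homotopy groups stabilize $p$-adically.

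For $H = G$, I would apply the same cofiber-by-cofiber strategy using the $O(2)$-equivariant filtration by $S(\mathbb{C}^n)$, noting that this filtration passes through $G$-CW-subspaces and induces a filtration of both sides of the comparison. The graded pieces on each side can be identified explicitly using the Adams isomorphism together with the tom Dieck splitting, reducing the comparison on $G$-fixed points to a collection of non-equivariant statements of the same flavor as the underlying case. The main obstacle here is bookkeeping at the prime $p = 2$, where the subgroup $C_2 \subset \mathbb{T}$ is normalized nontrivially by $G$ and the Weyl group structure becomes more subtle; this is handled by splitting the argument into the cases $p$ odd and $p = 2$ and treating the $D_{2^n}$-fixed points separately via Lemma \ref{EAL}\eqref{part 3}-style subdivision arguments. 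Once each graded piece is shown to be a $p$-completion $G$-equivalence, a standard Milnor sequence argument assembles these into the desired equivalence on homotopy limits.
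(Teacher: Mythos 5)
Your high-level strategy overlaps with the paper's: both filter $E\mathcal{R} = S(\mathbb{C}^\infty)$ by the $O(2)$-subspaces $S(\mathbb{C}^k)$ and reduce to analyzing the transfers on the successive cofibers. But the paper does not split into the cases $H=1$ and $H=G$; it works directly in the $G$-stable homotopy category throughout. The decisive step the paper uses, and which your proposal is missing, is the $O(2)$-equivariant untwisting homeomorphism $S(\mathbb{C})_+ \wedge Y \cong S(\mathbb{C})_+ \wedge i(Y)$, $(z,y)\mapsto (z,z^{-1}y)$, where $i(Y)$ denotes $Y$ with the $\mathbb{T}$-action forgotten. Combined with Lemma~\ref{dimension}, which identifies $S(\mathbb{C}^{k+1})/S(\mathbb{C}^k)$ with $S(\mathbb{C})_+ \wedge \Sigma S(\mathbb{C}^k)$, this trivializes the $\mathbb{T}$-action on the outer smash factor and shows that the transfer on each cofiber is $\tau_n \wedge \id$, where $\tau_n\colon S(\mathbb{C})/C_{p^n+} \to S(\mathbb{C})/C_{p^{n-1}+}$ is the $G$-equivariant circle transfer. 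After applying the root isomorphism, $\tau_n$ becomes $\id \vee p$ on the $O(2)$-decomposition $S(\mathbb{C})_+ = S^0 \vee S(\mathbb{C})$, so the error term is a homotopy limit over multiplication by $p$, whose mod $p$ homotopy groups vanish. This single equivariant calculation handles all subgroups at once.

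By contrast, your proposal to identify the graded pieces on $G$-fixed points via the tom Dieck splitting is not obviously the right tool here: the spectra $E\mathcal{R}_+\wedge_{C_{p^n}}X$ are orbit spectra, not suspension spectra of a $G$-space, so the tom Dieck splitting does not apply directly, and you would still need something like the untwisting observation to make progress. Your concern about $p=2$ and Weyl groups is also misplaced: the paper's argument makes no case split on $p$, since $G$ normalizes every $C_{p^n}\subset\mathbb{T}$ by inversion and the equivariant calculation $\tau_n = \id\vee p$ holds uniformly. Finally, citing Lemma~\ref{EAL}\eqref{part 3} is a non sequitur; that lemma concerns the Equivariant Approximation Lemma for Bökstedt-style homotopy colimits inside $\THR$ and has nothing to do with circle transfers or the filtration of $E\mathcal{R}$.
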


\begin{proof}
Recall that $S(\mathbb{C}^{\infty})$ is a model for $E\mathcal{R}$ and that $O(2)$ acts on $S(\mathbb{C})$ by multiplication and complex conjugation and on $S(\mathbb{C}^{\infty})$ via the homeomorphism 
\[
S(\mathbb{C}) \star S(\mathbb{C}) \star \cdots \cong S(\mathbb{C}^{\infty}).
\] 
We filter $S(\mathbb{C}^{\infty})$ by the $O(2)$-subspaces $S(\mathbb{C}^{k})$ and obtain the diagram
\[\begin{tikzpicture}
\node(a){$\Sigma^{1,1} S(\mathbb{C}^{k})_+ \wedge_{\mathbb{T}} X$}; \node(b)[right of = a,node distance = 5cm]{$\Sigma^{1,1} S(\mathbb{C}^{k+1})_+ \wedge_{\mathbb{T}} X$};
\node(c)[right of = b,node distance = 5cm]{$\Sigma^{1,1} S(\mathbb{C}^{k+1})/S(\mathbb{C}^{k}) \wedge_{\mathbb{T}} X$};
\node(f)[below of = a,node distance = 1.5cm]{$S(\mathbb{C}^{k})_+ \wedge_{C_{p^n}} X$};
\node(g)[right of = f,node distance = 5cm]{$S(\mathbb{C}^{k+1})_+ \wedge_{C_{p^n}} X$};
\node(h)[right of = g,node distance = 5cm]{$S(\mathbb{C}^{k+1})/S(\mathbb{C}^{k}) \wedge_{C_{p^n}} X$};

\draw[->](a) to node [above]{} (b);
\draw[->](b) to node [above]{} (c);

\draw[->](a) to node [left]{$\trf_{\mathbb{T}}$} (f);
\draw[->](b) to node [left]{$\trf_{\mathbb{T}}$} (g);
\draw[->](c) to node [left]{} (h);

\draw[->] (f) to node [below]{} (g);
\draw[->] (g) to node [below]{} (h);

\end{tikzpicture}\]
By induction it suffices to show that the outer right vertical map induces isomorphism after $p$-completion on the homotopy limit of the transfers
\[ \trf: S(\mathbb{C}^{k+1})/S(\mathbb{C}^{k}) \wedge_{C_{p^n}} X \to S(\mathbb{C}^{k+1})/S(\mathbb{C}^{k}) \wedge_{C_{p^{n-1}}} X.
\]
In order to identify the right vertical map, we first note the following general fact. Let $Y$ be a pointed $O(2)$-space and let $i: G \to O(2)$ be the inclusion. If we let $i(Y)$ denote the $\mathbb{T}$-trivial $O(2)$-space whose underlying $G$-space is $i^*Y$, then there is an $O(2)$-equivariant homeomorphism 
\[S(\mathbb{C})_+\wedge Y \xrightarrow{\cong} S(\mathbb{C})_+ \wedge i(Y), \quad (z,y) \mapsto (z,z^{-1}y).\]
Lemma \ref{dimension} provides a homeomorphism of $O(2)$-spaces 
\[
S(\mathbb{C}^{k+1})/S(\mathbb{C}^{k})\cong  S(\mathbb{C})_+ \wedge \Sigma S(\mathbb{C}^{k})\] 
and it follows that the right vertical map in the diagram identifies with 
\[\begin{tikzpicture}
\node(a){$\Sigma^{1,1} S(\mathbb{C})/\mathbb{T}_+ \wedge i(\Sigma S(\mathbb{C}^{k}) \wedge X)$}; 
\node(f)[right of = a,node distance = 7cm]{$S(\mathbb{C})/C_{p^n_+} \wedge  i(\Sigma S(\mathbb{C}^{k}) \wedge X),$};

\draw[->](a) to node [above]{$ \tau_{\infty} \wedge \id$} (f);

\end{tikzpicture}
\]
where $\tau_{\infty}:  \Sigma^{1,1}  S(\mathbb{C})/\mathbb{T}_+ \to   S(\mathbb{C})/C_{p^n +}$ is the $G$-equivariant transfer. Likewise the transfer map
\[ \trf: S(\mathbb{C}^{k+1})/S(\mathbb{C}^{k}) \wedge_{C_{p^n}} X \to S(\mathbb{C}^{k+1})/S(\mathbb{C}^{k}) \wedge_{C_{p^{n-1}}} X,\]
identifies with 
\[
\begin{tikzpicture}
\node(a){$S(\mathbb{C})/C_{p^n_+} \wedge  i(\Sigma S(\mathbb{C}^{k}) \wedge X) $}; 
\node(f)[right of = a,node distance = 7cm]{$S(\mathbb{C})/C_{p^{n-1}_+} \wedge  i(\Sigma S(\mathbb{C}^{k}) \wedge X),$};

\draw[->](a) to node [above]{$ \tau_{n} \wedge \id$} (f);

\end{tikzpicture}
\]
where $\tau_{n}:  S(\mathbb{C})/C_{p^n +} \to  S(\mathbb{C})/C_{p^{n-1} +}$ is the $G$-equivariant transfer.

As an $O(2)$-space $S(\mathbb{C})_+=S^0 \vee S(\mathbb{C})$, with $O(2)$ acting trivially on $S^0$. If we identify $S(\mathbb{C})/C_{p^n}$ with $S(\mathbb{C})$ via the root isomorphism $\rho_{p^n}$, then $\tau_n=\id \vee p$. The result now follows exactly as in \cite[Lemma 4.4.9]{Ma} by first arguing that there is a distinguished triangle of the form  
\[\Sigma^{1,1}  i(\Sigma S^{2k-1} \wedge X) \xrightarrow{\tau}  \holim_{\tau_n} S(\mathbb{C})/C_{p^n +} \wedge i(\Sigma S^{2k-1} \wedge X) \to \holim_{p} i(\Sigma S^{2k-1} \wedge X) \to \Sigma.\]
The mod $p$ homotopy groups of the right hand term vanish. Hence the map $\tau$ induces an isomorphism after $p$-completion as desired.
\end{proof}

When $\Gamma$ is trivial, the distinguished triangle of Theorem \ref{result} simplifies to
\[\begin{tikzpicture}
\node(a){$\Omega \mathbb{S} \vee \mathbb{S}$}; \node(b)[right of = a,node distance = 2.5cm]{$\TCR(\mathbb{S}, \id ;p) $};
\node(c)[right of = b,node distance = 3.3cm]{$ \displaystyle \holim_{i\geq 0} \operatorname{H}_{\boldsymbol{\cdot}}(C_{p^i}; \mathbb{S}) $};
\node(d)[right of = c,node distance = 4.8cm]{$\Sigma \Omega \mathbb{S}  \vee \Sigma \mathbb{S}. $};

\draw[->](a) to node [above]{} (b);
\draw[->](b) to node [above]{} (c);
\draw[->](c) to node [above]{$-(\varepsilon^{-1 } \circ \pr_0)$} (d);

\end{tikzpicture}\]
If we rotate the distinguished triangle arising from the homotopy fiber of the projection $\pr_0: \holim_{i\geq 0 }  \operatorname{H}_{\boldsymbol{\cdot}}(C_{p^i}; \mathbb{S}) \xrightarrow{} \mathbb{S}$ given in Lemma \ref{fiber} and add the distinguished triangle $\mathbb{S} \xrightarrow{\id} \mathbb{S}\xrightarrow{} * \xrightarrow{} \Sigma \mathbb{S}$, then we obtain the distinguished triangle:
\[\begin{tikzpicture}
\node(f){$\Omega \mathbb{S} \vee \mathbb{S}$};
\node(g)[right of = f,node distance = 2.5cm]{$H_{\pr_0}\vee \mathbb{S}$};
\node(h)[right of = g,node distance = 3.3cm]{$\displaystyle \holim_{i\geq 0} \operatorname{H}_{\boldsymbol{\cdot}}(C_{p^i}; \mathbb{S})$};
\node(i)[right of = h,node distance = 4.8cm]{$\Sigma \Omega \mathbb{S}  \vee \Sigma \mathbb{S}. $};

\draw[->] (f) to node [below]{${}$} (g);
\draw[->] (g) to node [below]{} (h);
\draw[->] (h) to node [above]{$-(\varepsilon^{-1 } \circ \pr_0)$} (i);
\end{tikzpicture}\]
It follows from the axioms of a triangulated category that there is a non-canonical isomorphism in the $G$-stable homotopy category $\TCR(\mathbb{S}, \id ;p) \sim H_{ pr_0}  \vee\mathbb{S}$. If we let 
$\mathbb{P}^{\infty}(\mathbb{C})$ be the infinite complex projective space with $G$ acting by complex conjugation, then we can identify the projection $\pr_0: \holim_{i\geq 0 }  \operatorname{H}_{\boldsymbol{\cdot}}(C_{p^i}; \mathbb{S}) \xrightarrow{} \mathbb{S}$  with the $G$-equivariant $\mathbb{T}$-transfer $\Sigma^{\infty}_{G} \Sigma^{1,1}\mathbb{P}^{\infty}(\mathbb{C})\to \mathbb{S}$ after $p$-completion by Theorem \ref{transfer}. If we let $\Sigma^{1,1} \mathbb{P}^{\infty}_{-1}(\mathbb{C})$ denote the $G$-equivariant homotopy fiber of the $\mathbb{T}$-transfer above, then we obtain the following corollary generalizing the classical calculation:

\begin{corollary}\label{TCR(S)}
After $p$-completion, there is an isomorphism in the $G$-stable homotopy category 
\[\TCR(\mathbb{S},\id ;p) \sim \Sigma^{1,1} \mathbb{P}^{\infty}_{-1}(\mathbb{C}) \vee \mathbb{S}.\]
\end{corollary}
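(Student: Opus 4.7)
The proof proposal is straightforward in outline because the text preceding the corollary already executes much of the argument; what remains is to organize the pieces. The plan is to specialize Theorem \ref{result} to the trivial group $\Gamma=1$, compare the resulting distinguished triangle with a second distinguished triangle built from Lemma \ref{fiber}, and then use Theorem \ref{transfer} to identify the $p$-completed homotopy limit.

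First I would set $\Gamma=1$, so that $\Bdi\Gamma = *$, $\Sigma^{\infty}_G \Bdi\Gamma_+ = \mathbb{S}$, and the map $\Delta_p\colon\Bdi\Gamma\to\Bdi\Gamma$ is the identity. Then the homotopy equalizer $\Sigma^{\infty}_G\Bdi\Gamma_+^{\eqq{\Delta_p=\id}} = \HE(\id,\id)$ is the free loop spectrum on $\mathbb{S}$, which in the $G$-stable category splits canonically as $\Omega\mathbb{S}\vee\mathbb{S}$ (the inclusion of constant loops is a section of the evaluation map $\HE(\id,\id)\to\mathbb{S}$, and its fiber is $\Omega\mathbb{S}$). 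Substituting into Theorem \ref{result} produces the distinguished triangle already displayed in the text preceding the corollary:
\[
\Omega \mathbb{S} \vee \mathbb{S} \longrightarrow \TCR(\mathbb{S},\id;p) \longrightarrow \holim_{i\geq 0} \operatorname{H}_{\boldsymbol{\cdot}}(C_{p^i};\mathbb{S}) \xrightarrow{-(\varepsilon^{-1}\circ \pr_0)} \Sigma(\Omega\mathbb{S}\vee\mathbb{S}).
\]

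Next, I would build a second distinguished triangle with the same outer terms and the same connecting map. Applying Lemma \ref{fiber} to $\pr_0\colon\holim_{i\geq 0}\operatorname{H}_{\boldsymbol{\cdot}}(C_{p^i};\mathbb{S}) \to \mathbb{S}$ and rotating once yields a distinguished triangle
\[
\Omega\mathbb{S} \longrightarrow H_{\pr_0} \longrightarrow \holim_{i\geq 0}\operatorname{H}_{\boldsymbol{\cdot}}(C_{p^i};\mathbb{S}) \xrightarrow{-(\varepsilon^{-1}\circ\pr_0)} \Sigma\Omega\mathbb{S}.
\]
Taking the direct sum with the trivial triangle $\mathbb{S}\xrightarrow{\id}\mathbb{S}\to *\to\Sigma\mathbb{S}$ gives a distinguished triangle whose third map coincides with the one in the $\TCR$ triangle. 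By the axioms of a triangulated category (the five lemma / completion of a morphism of triangles) there is a non-canonical isomorphism in the $G$-stable homotopy category
\[
\TCR(\mathbb{S},\id;p) \sim H_{\pr_0}\vee \mathbb{S}.
\]

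Finally, I would identify $H_{\pr_0}$ after $p$-completion. By Theorem \ref{transfer} applied to $X=\mathbb{S}$ (combined with the canonical identification $\holim_{i\geq 0} \simeq \holim_{i\geq 1}$ and the Adams isomorphism identification $\operatorname{H}_{\boldsymbol{\cdot}}(C_{p^i};\mathbb{S})\simeq E\mathcal{R}_+\wedge_{C_{p^i}}\mathbb{S}$ recalled in the text), there is a $p$-complete isomorphism
\[
\Sigma^{1,1}\, \Sigma^{\infty}_G (E\mathcal{R}/\mathbb{T})_+ \;\xrightarrow{\sim}\; \holim_{i\geq 0}\operatorname{H}_{\boldsymbol{\cdot}}(C_{p^i};\mathbb{S}),
\]
and $E\mathcal{R}/\mathbb{T}=S(\mathbb{C}^\infty)/\mathbb{T} \cong \mathbb{P}^\infty(\mathbb{C})$ with $G$ acting by complex conjugation. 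Under this identification $\pr_0$ corresponds to the $G$-equivariant $\mathbb{T}$-transfer, so its homotopy fiber is $\Sigma^{1,1}\mathbb{P}^\infty_{-1}(\mathbb{C})$ by definition, completing the argument.

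The main thing to check carefully is that the connecting map $-(\varepsilon^{-1}\circ\pr_0)$ produced by Theorem \ref{result} actually agrees (up to the correct sign) with the rotated boundary map from Lemma \ref{fiber}; this is immediate from the definitions of the two triangles, but is the one place where one must verify compatibility rather than simply quote an earlier result. The identification of $\pr_0$ with the $\mathbb{T}$-transfer under the Adams isomorphism, stated in the text before Theorem \ref{transfer}, is what allows the final step to go through and is the reason $p$-completion is needed.
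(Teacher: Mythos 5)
Your proposal is correct and follows the paper's own argument step for step: specialize Theorem \ref{result} to $\Gamma=1$, build a second triangle with matching third map from Lemma \ref{fiber} plus the trivial triangle on $\mathbb{S}$, invoke the triangulated-category axioms to get the non-canonical splitting $\TCR(\mathbb{S},\id;p)\sim H_{\pr_0}\vee\mathbb{S}$, and identify $\pr_0$ with the $G$-equivariant $\mathbb{T}$-transfer $\Sigma^{\infty}_G\Sigma^{1,1}\mathbb{P}^{\infty}(\mathbb{C})\to\mathbb{S}$ via Theorem \ref{transfer} after $p$-completion. The only cosmetic difference is that you phrase $\HE(\id,\id)$ as the free loop spectrum and derive the splitting $\Omega\mathbb{S}\vee\mathbb{S}$ explicitly, which the paper just records directly; this is a helpful clarification but not a different route.
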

\newpage
\appendix

\section{Equivariant homotopy theory}
This appendix recalls some results from equivariant homotopy theory, which we need in the paper.  Let $H$ be a finite group and let $f:  A\to B$ be a map of pointed $H$-spaces. We call the map $f$ $n$-connected respectively a weak-$H$-equivalence if $f^K:~A^K \to B^K$ is $n$-connected respectively a weak equivalence for all subgroups $K\leq H$. Let $\Map_H(A,B)$ denote the space of pointed $H$-equivariant maps.

The following lemma can be found in \cite[Prop. 2.7]{Ad}.
 \begin{lemma}\label{A2}  
Let $f: B \to C$ be a map of pointed $H$-spaces and let $A$ be a pointed $H$-CW-complex. The induced map
\[f_*: \Map_H(A,B)  \to \Map_H(A,C)\]
is $n$-connected with $n\geq \underset{K\leq H}{\min} \{\conn(f^K)-\dim(A^K)\}$, where $K$ runs through all subgroups of $H$. 
\end{lemma}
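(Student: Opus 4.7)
The approach is to induct on the equivariant CW structure of $A$, reducing to a direct computation on single orbit cells via the standard adjunction $\Map_H(H/K_+ \wedge Z, X) \cong \Map_*(Z, X^K)$.

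The base case is that of a single cell $A = H/K_+ \wedge S^m$ for some subgroup $K \leq H$ and some $m \geq 0$. Under the adjunction above the map $f_*$ becomes the $m$-fold loop map $\Omega^m f^K \colon \Omega^m B^K \to \Omega^m C^K$. Since looping once drops connectivity by exactly one, this composite is $(\conn(f^K) - m)$-connected. On the other hand $\dim(A^K) = m$ (the cell survives fixed points because $K$ is subconjugate to itself), so the quantity $\conn(f^K) - m$ appears as one of the terms in the minimum on the right-hand side. Hence the estimate holds for a single cell.

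For the inductive step I would filter $A$ by its equivariant skeleta $A^{(-1)} = \ast \subset A^{(0)} \subset A^{(1)} \subset \cdots$, where $A^{(m)}$ is obtained from $A^{(m-1)}$ by attaching cells of the form $H/K_+ \wedge D^m$ along $H/K_+ \wedge S^{m-1}$. The cofiber $A^{(m)}/A^{(m-1)}$ is an $H$-equivariant wedge of spheres $H/K_+ \wedge S^m$, and the cofiber sequence $A^{(m-1)} \hookrightarrow A^{(m)} \to A^{(m)}/A^{(m-1)}$ becomes, upon applying $\Map_H(-,X)$, a fiber sequence
\[
\Map_H(A^{(m)}/A^{(m-1)}, X) \to \Map_H(A^{(m)}, X) \to \Map_H(A^{(m-1)}, X).
\]
Comparing these fiber sequences for $X = B$ and $X = C$, the five-lemma applied to the long exact sequences of homotopy groups yields the connectivity estimate for $A^{(m)}$ from the base case (applied to the left-hand term) and the inductive hypothesis (applied to the right-hand term). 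For an infinite-dimensional $A$ one passes to the limit over $m$; the $\mathrm{lim}^1$ contribution vanishes in the range of interest because the connectivities grow with $m$.

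The main bookkeeping challenge is tracking how $\dim(A^{(m),K})$ builds up: a cell $H/L_+ \wedge D^m$ contributes to $A^K$ precisely when $K$ is subconjugate to $L$, and when it does it contributes dimension $m$. Consequently each newly attached cell gives a term $\conn(f^L) - m \geq \min_{K \leq H}(\conn(f^K) - \dim(A^K))$ and the minimum estimate is preserved at every stage of the induction. This is essentially the argument given in \cite[Prop.~2.7]{Ad}.
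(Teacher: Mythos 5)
Your proof is correct and is the standard argument: the paper gives no proof of this lemma at all, simply citing \cite[Prop.~2.7]{Ad}, where essentially the same induction over equivariant cells (adjunction on a single orbit cell, then comparison of the fibration sequences obtained by restricting along the skeletal filtration) is carried out. One small inaccuracy worth noting: the connectivities of the maps $\Map_H(A^{(m)},B)\to\Map_H(A^{(m)},C)$ do \emph{not} grow with $m$ --- they can only decrease as cells are attached --- so your stated justification for the passage to the limit is wrong; however, this only matters when $A$ is infinite-dimensional, and in that case $\dim(A^{\{e\}})=\dim(A)=\infty$ already makes the asserted lower bound vacuous, so no genuine gap results.
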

Let $i: A' \to A$ be an $H$-cofibration. For any pointed $H$-space $B$, the induced map
\[i^*:\Map_H(A,B) \xrightarrow{} \Map_H(A',B)\]
is a fibration with fiber $\Map_H(A/A',B)$. The above Lemma estimates the connectivity of mapping spaces like the fiber by considering the map $f: B \to *$. The estimate amounts to the following Lemma: 
\begin{lemma} \label{conn of mapping space}
Let $A$ be a pointed $H$-CW-complex and let $B$ be a pointed $H$-space. Then 
\[\conn(\Map_H(A,B)) \geq \underset{K\leq H}{\min} (\conn(B^K)-\dim(A^K)), \]
where $K$ runs through all subgroups of $H$. 
\end{lemma}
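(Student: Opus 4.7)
The plan is to derive this estimate as an immediate corollary of the preceding Lemma~\ref{A2} by taking the target of the coefficient map to be a point. Specifically, let $f: B \to \ast$ be the unique map to the one-point pointed $H$-space. Then $\Map_H(A, \ast)$ is itself a point, so the induced map
\[
f_* : \Map_H(A, B) \longrightarrow \Map_H(A, \ast) = \ast
\]
has connectivity equal to $\conn(\Map_H(A, B))$ in the usual sense (up to the standard shift relating ``$g$ is $n$-connected'' and ``the source of $g$ is $n$-connected'' when the target is contractible). This reduces the statement to estimating the connectivity of $f_*$.

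On $K$-fixed points the map $f^K: B^K \to \ast$ is simply the constant map to a point, and so its connectivity is controlled directly by $\conn(B^K)$. Applying Lemma~\ref{A2} to this choice of $f$ therefore yields
\[
\conn(\Map_H(A,B)) \geq \min_{K\leq H}\bigl(\conn(B^K) - \dim(A^K)\bigr),
\]
which is the desired inequality. There is no real obstacle: the present lemma is precisely the specialization of Lemma~\ref{A2} to the case of a constant map, turning the estimate for a comparison between two mapping spaces into an estimate for the connectivity of a single equivariant mapping space. The two formulations are indicated in the paragraph immediately preceding the statement, and the paragraph itself essentially already contains this argument; the proof is a one-line deduction from Lemma~\ref{A2}.
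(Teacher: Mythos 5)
Your proposal is correct and is exactly the paper's own argument: the paragraph preceding the lemma derives it by applying Lemma~\ref{A2} to the map $f\colon B\to\ast$, with the same off-by-one bookkeeping between the connectivity of a map to a point and the connectivity of its source that you note. Nothing further is needed.
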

 
Throughout this paper we will make use of the Equivariant Suspension Theorem, a proof can be found in \cite[Theorem 3.3]{Ad}.

\begin{theorem}[Equivariant Suspension Theorem] \label{suspension}
Let $V$ be a finite dimensional orthogonal $H$-representation and let $A$ be a based $H$-space. The adjunction unit 
\[
\eta: A \to \Omega^V \Sigma^V A
\]
is $n$-connected, where 
\[n \geq \min \{2\cdot \conn(A^H)+1,\conn(A^K)  \mid  K\leq H\text{ with } \dim \ V^K >\dim \ V^H\}.\]
\end{theorem}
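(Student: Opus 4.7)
The plan is to estimate, for each closed subgroup $K \le H$, the connectivity of the induced map on $K$-fixed points
\[
\eta^K \colon A^K \to (\Omega^V \Sigma^V A)^K = \Map_{*K}(S^V, S^V \wedge A),
\]
and then take the minimum over all $K \le H$. The strategy is to isolate the $H$-trivial summand of $V$, on which the ordinary (non-equivariant) Freudenthal theorem applies directly, and to handle the remaining summand by an equivariant cellular induction.

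First, decompose $V = V^H \oplus W$ as an orthogonal $H$-representation with $W = (V^H)^\perp$, so that $W^H = 0$, and factor the unit as
\[
A \xrightarrow{\eta_1} \Omega^{V^H} \Sigma^{V^H} A \xrightarrow{\Omega^{V^H}(\eta_2)\Sigma^{V^H}} \Omega^V \Sigma^V A,
\]
where $\eta_1$ is the unit for $V^H$ and $\eta_2$ is the unit for $W$ applied to $\Sigma^{V^H} A$. Since $H$ acts trivially on $V^H$, both $\Sigma^{V^H}$ and $\Omega^{V^H}$ commute with $K$-fixed points for every $K \le H$, so on $K$-fixed points the first factor $\eta_1^K$ is precisely the classical non-equivariant unit $A^K \to \Omega^{V^H} \Sigma^{V^H} A^K$. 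The ordinary Freudenthal theorem then bounds its connectivity below by $2\conn(A^K) + 1$; specialising to $K = H$ produces the first term $2\conn(A^H) + 1$ in the claimed minimum.

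The second factor $\eta_2$ is analysed by induction on $\dim W$ using an $H$-CW filtration $\emptyset = X_0 \subset X_1 \subset \cdots \subset X_N = S^W$ of the equivariant sphere. Because $W^H = 0$, every attached equivariant cell has an isotropy type $H/K_0$ with $V^{K_0} \supsetneq V^H$, which is exactly the set of subgroups appearing in the second term of the theorem's minimum. Applying $\Map_{*K}(-,\Sigma^W B)$ with $B = \Sigma^{V^H} A$ to each cofibre sequence $X_{i-1} \hookrightarrow X_i \to X_i/X_{i-1}$ produces a fibration whose fibre is the mapping space from a single cell $S^n \wedge H/K_0{}_+$. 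Estimating the connectivity of these fibres via Lemma \ref{conn of mapping space} and Lemma \ref{A2}, and using that $(\Sigma^{V^H} A)^{K_0} = \Sigma^{V^H} A^{K_0}$ has connectivity $\conn(A^{K_0}) + \dim V^H$, one shows that each cell of orbit type $H/K_0$ contributes the term $\conn(A^{K_0})$ to the bound on $\eta^K$. Assembling with the output of the first step and taking the minimum over all relevant $K_0$ gives the stated bound on $n$.

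The main obstacle will be the uniform cellular bookkeeping: each equivariant cell attachment must be controlled on \emph{every} $K$-fixed-point space at once, since the theorem gives a single value of $n$ valid for all $K \le H$ simultaneously. Lemma \ref{A2} supplies precisely this uniform control, identifying for each attached cell the critical subgroup as its isotropy $K_0$; combined with the hypothesis $V^{K_0} \supsetneq V^H$, this locates the corresponding term $\conn(A^{K_0})$ in the minimum and closes the induction.
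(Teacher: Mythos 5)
The paper does not actually prove Theorem \ref{suspension}; it quotes it from Adams, so I am comparing your outline with the standard argument there. Your overall route --- split $V=V^H\oplus W$, apply the classical Freudenthal theorem to the trivial summand, and control the summand $W$ cell by cell using Lemma \ref{conn of mapping space} --- is indeed the standard one, and your identification of the relevant isotropy groups (those $K_0$ with $\dim V^{K_0}>\dim V^H$) is correct. The gap is at the decisive step: you estimate the connectivity of various mapping \emph{spaces} built from skeleta of $S^W$, but never say how these estimates bound the connectivity of the \emph{map} $\eta_2$. Knowing that the fibres of the restriction maps $\Map_H(X_i,S^W\wedge B)\to\Map_H(X_{i-1},S^W\wedge B)$ are highly connected locates $\eta_2$ nowhere in that tower. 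The missing observation is that the filtration must start at $X_0=S^{W^H}=S^0$ (not $\emptyset$: the two $H$-fixed points of $S^W$ are cells of isotropy $H$, so your claim that every cell has isotropy $K_0$ with $V^{K_0}\supsetneq V^H$ is literally false and is only true for the cells attached to $S^{W^H}$). With that starting point the bottom of the tower is $\Map_H(S^0,S^W\wedge B)=(S^W\wedge B)^H=B^H$, the composite of $\eta_2^H$ with total restriction is the identity of $B^H$, and hence $\eta_2^H$ is a \emph{section} of a fibration with fibre $\Map_H(S^W/S^0,S^W\wedge B)$; this is what converts the fibre estimate $\conn \geq \dim V^H+\min_{K_0}\conn(A^{K_0})$ from Lemma \ref{conn of mapping space} into the bound $\conn(\eta_2^H)\geq \dim V^H+\min_{K_0}\conn(A^{K_0})+1$. (One also needs that cells of isotropy $K_0$ have dimension at most $\dim W^{K_0}$, which holds because they lie in $S^{W^{K_0}}$; that inequality is exactly where the cancellation $\conn((S^W\wedge B)^{K_0})-\dim((S^W/S^0)^{K_0})\geq \dim V^H+\conn(A^{K_0})$ comes from.) Looping down by $\Omega^{V^H}$ and composing with $\eta_1^H$ then yields the stated bound for $\eta^H$.

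Separately, your opening plan --- estimate $\conn(\eta^K)$ for every $K\leq H$ and take the minimum --- cannot produce the stated $n$, and the uniform ``all subgroups'' reading of the theorem is in fact false: take $H=G$, $V=\mathbb{R}$ trivial and $A=G_+$; then $A^H$ is a point and there is no $K$ with $\dim V^K>\dim V^H$, so the stated minimum is infinite, yet $\eta^e\colon G_+\to\Omega\Sigma G_+$ is not even surjective on $\pi_0$. The theorem, as in Adams and as it is used in this paper, is a statement about the single map of $H$-fixed points $\eta^H$; for a proper subgroup $K$ one reapplies it with $K$ as the ambient group, obtaining the bound $\min\{2\conn(A^K)+1,\ \conn(A^L)\mid L\leq K,\ \dim V^L>\dim V^K\}$, which need not dominate the stated $n$. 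Your argument specialised to $K=H$ is the correct target; as written, the quantification over subgroups is off, and the sentence ``specialising to $K=H$ produces the first term'' quietly replaces the claimed uniform bound by the $H$-fixed-point bound without acknowledging the change.
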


Finally, we will need the following lemma.

\begin{lemma} \label{dimension} 
Let $V$ and $W$ be finite dimensional orthogonal $H$-representations. There is a canonical $H$-equivariant homeomorphism
\[ S^{V \oplus W}/S^W \cong \Sigma S^W \wedge S(V)_+.\]
\end{lemma}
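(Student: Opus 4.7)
The plan is to reduce the statement to the special case $W = 0$ and then smash with $S^W$. Under the canonical identification $S^{V \oplus W} \cong S^V \wedge S^W$, the subspace $S^W$ corresponds to $S^0 \wedge S^W$, where $S^0 = \{0, \infty\} \subset S^V = V^+$ consists of the origin together with the basepoint at infinity. Using the general fact that $(A \wedge B)/(A' \wedge B) \cong (A/A') \wedge B$ for a closed subspace $A' \subseteq A$ containing the basepoint, we obtain an $H$-equivariant homeomorphism
\[ S^{V \oplus W}/S^W \cong (S^V/S^0) \wedge S^W. \]
It therefore suffices to produce an $H$-equivariant homeomorphism $S^V/S^0 \cong \Sigma S(V)_+$.

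To construct this, I would use polar coordinates. The map $(t, u) \mapsto e^t u$ defines an $H$-equivariant homeomorphism $\R \times S(V) \xrightarrow{\cong} V \setminus \{0\}$, where $H$ acts trivially on $\R$ and by the restriction of the representation on $S(V)$. Passing to one-point compactifications and noting that $e^t u \to 0$ as $t \to -\infty$ and $e^t u \to \infty$ as $t \to +\infty$, this extends to a continuous $H$-equivariant map $S^1 \times S(V) \to S^V/S^0$ that collapses $\{\infty\} \times S(V)$ to the basepoint (here $S^1 = \R^+$, with the single added point labeled $\infty$). Since the domain is compact and the target is Hausdorff, the induced map
\[ S^1 \wedge S(V)_+ \cong (S^1 \times S(V))/(\{\infty\} \times S(V)) \xrightarrow{\cong} S^V/S^0 \]
is a homeomorphism, where the first identification is the standard formula $A \wedge B_+ \cong (A \times B)/(\{*_A\} \times B)$. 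Combining with the reduction above yields the claimed $H$-equivariant homeomorphism
\[ S^{V \oplus W}/S^W \cong \Sigma S(V)_+ \wedge S^W \cong \Sigma S^W \wedge S(V)_+. \]

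The argument is essentially routine and there is no serious obstacle beyond bookkeeping. The two points that require care are the verification of $H$-equivariance, which is immediate since the polar decomposition commutes with the orthogonal action and the radial coordinate carries the trivial action, and the degenerate case $V = 0$, where $S(V) = \emptyset$, the space $S(V)_+$ reduces to the basepoint alone, and both sides of the claimed homeomorphism collapse to a point.
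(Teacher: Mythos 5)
Your proof is correct but takes a genuinely different route from the paper's. The paper works on the unit-sphere side first: it uses the join decomposition
\[
S(V\oplus W) \;\cong\; S(V)\star S(W) \;\cong\; S(V)\times D(W)\,\cup_{S(V)\times S(W)}\,D(V)\times S(W)
\]
to produce an $H$-equivariant homeomorphism $S(V\oplus W)/S(W)\cong S^W\wedge S(V)_+$, and then reads off the statement by comparing the three cofibre sequences $S(U)_+\to S^0\to S^U$ for $U=W$, $U=V\oplus W$, and their quotient in a $3\times 3$ diagram, which exhibits $S^{V\oplus W}/S^W$ as the suspension $\Sigma\bigl(S(V\oplus W)/S(W)\bigr)$. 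You instead work on the representation-sphere side: you peel off $S^W$ via $S^{V\oplus W}\cong S^V\wedge S^W$ and the identity $(A\wedge B)/(A'\wedge B)\cong (A/A')\wedge B$, reduce to the case $W=0$, and establish $S^V/S^0\cong\Sigma S(V)_+$ directly from the polar-coordinate homeomorphism $\mathbb{R}\times S(V)\cong V\setminus\{0\}$ by passing to one-point compactifications. Both arguments are routine; yours is a bit more elementary and self-contained in that it avoids the join decomposition and the $3\times 3$ diagram, while the paper's formulation has the incidental advantage that its intermediate step $S(V\oplus W)/S(W)\cong S^W\wedge S(V)_+$ is precisely the form of the homeomorphism that is actually invoked when this lemma is cited in the proof of Theorem~\ref{transfer}.
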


\begin{proof}
First note that we have canonical $H$-equivariant homeomorphisms
\begin{align*}
S(V\oplus W) \cong S(V) \star S(W) \cong S(V) \times D(W) \cup_{S(V) \times S(W)} D(V) \times S(W),
\end{align*} 
which gives a canonical $H$-equivariant homeomorphism
\begin{align*}
S(V\oplus W)/S(W) \cong S^W \wedge S(V)_+.
\end{align*}
We have a commutative diagram
\[\begin{tikzpicture}
\node(a){$ S(W)_+$}; \node(b)[right of = a,node distance = 2.5cm]{$S^0 $};
\node(c)[right of = b,node distance = 2.5cm]{$S^W$};
\node(f)[below of = a,node distance = 1.5cm]{$ S(V \oplus W)_+$};
\node(g)[right of = f,node distance = 2.5cm]{$S^0$};
\node(h)[right of = g,node distance = 2.5cm]{$S^{V \oplus W}$};
\node(i)[below of = f,node distance = 1.5cm]{$ S(V \oplus W)/S(W)$};
\node(j)[right of = i,node distance = 2.5cm]{$*$};
\node(k)[right of = j,node distance = 2.5cm]{$S^{V \oplus W}/S^W$};

\draw[->](a) to node [above]{$c$} (b);
\draw[->](b) to node [above]{} (c);

\draw[->](f) to node [above]{$c$} (g);
\draw[->](g) to node [above]{} (h);

\draw[->](i) to node [above]{} (j);
\draw[->](j) to node [above]{} (k);

\draw[right hook->](a) to node [above]{} (f);
\draw[->](b) to node [right]{$\id$} (g);
\draw[right hook->](c) to node [above]{} (h);

\draw[->](f) to node [above]{} (i);
\draw[->](g) to node [left]{} (j);
\draw[->](h) to node [above]{} (k);

\end{tikzpicture}\]
where $c$ collapses the unit-sphere to the non-basepoint, and we can identify
\[S^{V \oplus W}/S^W \cong \Sigma S^W \wedge S(V)_+.\].
\end{proof}

\newpage

\end{document}